\documentclass[11pt,twoside]{amsart}
\usepackage{amssymb}
\usepackage{graphicx,color}

\setlength{\textheight}{8.5truein}
\setlength{\textwidth}{6.5truein}
\setlength{\evensidemargin}{0truein}
\setlength{\oddsidemargin}{0truein}
\setlength{\topmargin}{0truein}

\newtheorem{thm}{Theorem}[section]
\newtheorem{prop}[thm]{Proposition}

\newtheorem{lem}[thm]{Lemma}
\newtheorem{cor}[thm]{Corollary}
\newtheorem{defn}[thm]{Definition}
\newtheorem{notation}[thm]{Notation}
\newtheorem{ex}[thm]{Example}
\newtheorem{rem}[thm]{Remark}

\newtheorem{prob}[thm]{Problem}

\newcommand{\skipit}[1]{{}}
\newcommand{\prfend}{\hbox to7pt{\hfil}
\par\vskip-\baselineskip\hbox to\hsize
{\hfil\vbox {\hrule width6pt height6pt}}\vskip\baselineskip}

\newcommand{\ZZ}{\mathbb{Z}}

\newcommand {\PP}{\mathbb{P}}

\newcommand{\W}{\mathcal{W}_{d}}

\DeclareMathOperator{\Image}{Im}

\newcommand{\myarrow}[2]{\hbox to #1pt{\hfil$\to$\hfil}{\hskip-#1pt{\raise
10pt\hbox to#1pt{\hfil$\scriptscriptstyle #2$\hfil}}}}

\begin{document}

\title{Minimal set of binomial generators for certain Veronese $3$-fold projections.}

\author[Liena Colarte]{Liena Colarte}
\address{Department de matem\`{a}tiques i Inform\`{a}tica, Universitat de Barcelona, Gran Via de les Corts Catalanes 585, 08007 Barcelona,
Spain}
\email{liena.colarte@ub.edu}

\author[Rosa M. Mir\'o-Roig]{Rosa M. Mir\'o-Roig}
\address{Department de matem\`{a}tiques i Inform\`{a}tica, Universitat de Barcelona, Gran Via de les Corts Catalanes 585, 08007 Barcelona,
Spain}
\email{miro@ub.edu}

\begin{abstract} The goal of this paper is to explicitly describe a minimal binomial generating set of a class of lattice ideals, namely the ideal of certain Veronese $3$-fold projections. More precisely, for any integer $d\ge 4$ and any $d$-th root $e$ of 1 we denote by $X_d$ the toric variety defined as the image of the morphism $\varphi _{T_d}:\PP^3 \longrightarrow \PP^{\mu (T_d)-1}$ where $T_d$ are all monomials of degree $d$ in $k[x,y,z,t]$ invariant under the action of the diagonal matrix  $M(1,e,e^2,e^3).$ In this work, we  describe a $\ZZ$-basis of the lattice $L_{\eta }$ associated to $I(X_d)$  as well as a minimal binomial set of generators of the lattice ideal $I(X_d)=I_+(\eta)$.
\end{abstract}

\thanks{Acknowledgments:   The   authors are partially   supported
by  MTM2016--78623-P.
\\ {\it Key words and phrases. Monomial ideals, binomial ideals, lattice ideals, GT-systems,
toric varieties.}
\\ {\it 2010 Mathematic Subject Classification. 13E10, 14M25, 14N05, 14N15, 53A20.}}

\maketitle

\tableofcontents

\markboth{L. Colarte, R. M. Mir\'o-Roig}{Generators of lattice ideals}

%*****************************************************************************
\large

\section{Introduction}

A binomial ideal  $I\subset k[x_{0},\hdots, x_{n}]$ with $k$ a field
is an ideal generated by polynomials with at most two summands, say $ax^{\alpha}+bx^{\beta}$, where $a,b \in k$ and $\alpha,\beta \in
\ZZ_{+}^{n+1}$. Binomial ideals are a large class of ideals which have been amply studied in Combinatoric, Commutative Algebra as well as in Algebraic Geometry. In \cite{ES}, it was stated that prime binomial ideals are precisely the defining ideals of toric varieties and hence they are lattice ideals, i.e. given a prime binomial ideal $I\subset k[x_{0},\hdots, x_{n}]$ there is a lattice $L\subset \ZZ^{n+1}$ such that  $I=I_L:=\{x^u-x^v \mid u,v \in \ZZ^{n+1} \text{ and }u-v\in L \}$.
Ever since, to compute explicitly a minimal set of generators for lattice ideals has been a challenging problem. It is worthwhile to point out that  for a given generating set ${\mathcal D}$ of the lattice $L$ the ideal $I({\mathcal D}) = (x^{\alpha^{+}}-x^{\alpha_{-}} \, \mid \, \alpha_{+},\alpha_{-} \in \ZZ_{+}^{n+1}, \alpha_{+}-\alpha_{-} \in {\mathcal D}) \subseteq
I_L$ and the equality does not hold in general.

In \cite{ES}, Eisenbud and Sturmfels proved that $I_L$ is a prime ideal if and only if the lattice $L$ is saturated. For  prime binomial ideals, a set of generators ${\mathcal D}$ of $L$ completely characterizes a set of generators of $I_L$. Indeed, a generating set ${\mathcal D}$ of $L$ is called a Markov basis if for any lattice point $\alpha_{+}-\alpha_{-} \in L$ there exits a finite sequence $\{a_{1},\hdots, a_{t}\} \subset \ZZ^{n+1}_{+}$ satisfying $a_{1} = \alpha_{+}, a_{t} = \alpha_{-}$ and $a_{i}-a_{i+1} \in \mathcal{D}$ for all $1 \leq i \leq t$. In \cite{DST}, Diaconis and Sturmfels showed that given a set of generators ${\mathcal D}$ of $L$ then $I({\mathcal D}) = I_L$ if and only if ${\mathcal D}$ is a Markov basis. We cite \cite{ChaKT}, \cite{ChaTV}, \cite{ChaTV1}, \cite{DST} and \cite{HM} for a detailed exposition of Markov bases of lattice ideals and related problems.

In this paper, we focus our attention in computing a minimal binomial set of generators of a large family of  binomial ideals $I(X_d)$. They are the ideals associated to  suitable projections of Veronese 3-folds.
A Veronese 3-fold $V$ is a projective variety given parametrically by the set $M_{3,d}$ of all monomials of degree $d$ in $k[x_0,x_1,x_2,x_3]$ and by  a projection of $V$ we understand a projective 3-fold given parametrically by a subset of $M_{3,d}$. In \cite{Grob},  Gr\"{o}bner proved that $V$ is arithmetically Cohen-Macaulay (aCM, for short)
and its ideal $I(V)$ is generated by quadrics. This is not longer true for all projections of $V$ and it is a longstanding open problem to find a minimal set of generators of any projection of $V$ as well as determine whether a projection of $V$ is  aCM. In this paper, we will consider as a subset of $M_{3,d}$ the set $T_{d}$ of all monomials of degree $d$ invariant under the action of the diagonal  matrix $M(1,e,e^2,e^3)$ where $e$ is a primitive root of 1 of order $d$.

 Our interest in these ideals $T_d$ relies on  the following three facts: (1) For all $d\ge 4$ $T_d$ fails the Weak Lefschetz property (WLP) in degree $d-1$; (2) The associated morphism  $\varphi_{T_{d}}: \PP^{3} \longrightarrow \PP^{\mu(T_{d})-1}$  is a Galois cover of degree $d$ with cyclic Galois  group $\ZZ/d\ZZ$ and the image  $X_d$ of $\varphi_{T_{d}}$ is a $3$-dimensional rational projective variety smooth outside the image of the 4 fundamental points. We call it a GT-threefold; and (3) the 3-fold
 $Y_d=\overline{\Image (
\phi )}$
where
$\phi \colon\PP^n \dashrightarrow \PP^{\binom{3+d}{d}-\mu (T_d)-1}$ is the  rational map associated to $(I^{-1})_d$,
  satisfies at least one Laplace equation of order
$d-1$.

Our goal is to prove  that the homogeneous ideal  $I(X_{d})$ of the $GT$-threefold $X_d$ is the homogeneous prime binomial ideal associated to a saturated partial character $\eta$ of $\ZZ^{\mu(T_{d})}$ with associated lattice $L_{\eta}$. Afterwards we explicitly compute a minimal binomial set of generators of $I(X_{d})$. The lattice points associated to these set of generators form a Markov basis of $L_{\eta}$. Our main result states that $I(X_{d})$ is generated by quadrics if $d$ is even and by quadrics and cubics if $d$ is odd.

\vspace{0.3cm}
Next we outline the structure of this note. In Section 2, we fix the notation we use in the rest of this paper, we relate artinian ideals failing  the Weak Lefschetz Property to projective varieties satisfying at least one Laplace equation and we recall the  notion of  Togliatti systems and $GT$-systems introduced in \cite{MMO} and \cite{MeMR}. In Section 3,  we give an explicit description of all monomials $T_{d}$, $d \geq 4$, invariant under the action of the diagonal matrix $M(1,e,e^{2},e^{3})$ and we prove that $T_d$ is a GT-system (Proposition \ref{Prop:GaloisCover}).

The main body of this work is developed in Sections 4 and 5. We denote by $X_d$ the GT-threefold  associated to the GT-system $T_d$ and we first show that $X_{d}$ is an irreducible toric variety whose associated ideal $I(X_{d})$ is a lattice ideal. In section 4, we consider the
ideal $I_{d}$ generated by all  binomials of degree $2$ vanishing in $X_{d}$. We associate to $I_{d}$ a lattice $L_{\eta}$ and a partial character $\eta$ of $\ZZ^{\mu(T_{d})}$. We demonstrate that $L_{\eta}$ is a saturated lattice of rank $\mu(I_{d})-4$ (Theorem \ref{Thm:MainTheorem}) and we show that $I(X_{d})$ is the lattice ideal $I_{+}(\eta)$ of $L_{\eta}$ (Corollary \ref{igualtat_ideals}). In Section 4, we also  describe a $\ZZ$-basis of the lattice $L_{\eta}$ (Corollary \ref{Col:Basis}) and we  explore the relation between $I_{d}$ and $I_{+}(\eta)$.

We devote Section 5 to explicitly determine a minimal set of generators of the lattice ideals
$I(X_{d})$. Our main result states that $I_{d} = I(X_{d})$ if $d$ is even and $I(X_{d}) = I_{d} + J$ if $d$ is odd where $J$ is an ideal generated by certain set of cubics of $I(X_{d})$ that we properly specify (Theorem \ref{Teorema:BigTheorem}). All techniques and results we develop to study the lattice ideal $I(X_{d})$ are inspired by the ones of Markov basis explained in \cite{DST}, \cite{HM} and \cite{ChaTV1}. The set of lattice points of generators of $I_{d}$ if $d$ even and $I_{d}$ and $J$ if $d$ odd forms a Markov basis of $L_{\eta}$. In Section 6, we observe that all $GT$-varieties are aCM and we concern about computing a minimal free resolution of $X_{d}$. 

\vskip 2mm \noindent
 {\bf Acknowledgement.}  The authors would like to thank E. Mezzetti and M. Salat for useful discussions on Galois-Togliatti systems.

%%%%%%%%%%%%%%%%%%%%%%%%%%%%%%%%%%%%%%%%%%%%%%%%%%
\section{Preliminaries}
\label{defs and prelim results}
Throughout this paper we consider the homogeneous polynomial ring $R = k[x_0,\cdots ,x_n]$ where $k$ is an algebraically closed field of characteristic zero.
Let $I\subset R $ be a homogeneous artinian ideal. We say that   $I$ has the \emph{Weak Lefschetz Property (WLP)}
if there is a linear form $L \in (R/I)_1$ such that, for all
integers $j$, the multiplication map
\[
\times L: (R/I)_{j-1} \to (R/I)_j
\]
has maximal rank, i.e.\ it is injective or surjective.   Though many homogeneous artinian ideals are expected to have the
WLP, establishing this property is often rather difficult.
 Recently the failure of the WLP has been connected to a large number of problems which  seem to be unrelated at first glance.
For example, in \cite{MMO}, Mezzetti, Mir\'{o}-Roig and Ottaviani proved
that the failure of the WLP is related to the existence of varieties satisfying at least one Laplace
equation of order greater than 2. More precisely, they proved:

\begin{thm} \label{tea} Let $I\subset R$ be an artinian
ideal
generated
by $r$ forms $F_1,\dotsc,F_{r}$ of degree $d$ and let $I^{-1}$ be its Macaulay inverse system.
If
$r\le \binom{n+d-1}{n-1}$, then
  the following conditions are equivalent:
\begin{itemize}
\item[(1)] $I$ fails the WLP in degree $d-1$;
\item[(2)]  $F_1,\dotsc,F_{r}$ become
$k$-linearly dependent on a general hyperplane $H$ of $\PP^n$;
\item[(3)] the $n$-dimensional   variety
 $X=\overline{\Image (
\varphi )}$
where
$\varphi \colon\PP^n \dashrightarrow \PP^{\binom{n+d}{d}-r-1}$ is the  rational map associated to $(I^{-1})_d$,
  satisfies at least one Laplace equation of order
$d-1$.
\end{itemize}
\end{thm}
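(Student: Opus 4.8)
The plan is to prove the two equivalences $(1)\Leftrightarrow(2)$ and $(2)\Leftrightarrow(3)$ separately. The first is elementary linear algebra once the dimension hypothesis is unwound; the second is the geometric heart and rests on realizing $X$ as a projection of a Veronese variety via apolarity.

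First I would treat $(1)\Leftrightarrow(2)$. Since $I$ is generated in degree $d$, one has $(R/I)_{d-1}=R_{d-1}$ and $(R/I)_d=R_d/\langle F_1,\dots,F_r\rangle$. Using $\dim_k R_d-\dim_k R_{d-1}=\binom{n+d-1}{n-1}$, the hypothesis $r\le\binom{n+d-1}{n-1}$ forces $\dim_k(R/I)_d\ge\dim_k(R/I)_{d-1}$, so for a general linear form $L$ the map $\times L$ has maximal rank exactly when it is injective. Thus (1) says: for general $L$ there is $0\ne G\in R_{d-1}$ with $LG\in I_d$, i.e. $LG=\sum c_iF_i$ for some scalars $c_i$. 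Choosing coordinates with $L=x_0$ and reducing modulo $L$, this is precisely the statement that the restrictions $\overline{F_1},\dots,\overline{F_r}$ are $k$-linearly dependent on $H=\{L=0\}$, which is (2). Assuming the $F_i$ form a minimal (hence $k$-linearly independent) generating set, one checks that "$G\ne 0$" and "the $c_i$ are not all zero" correspond under this translation, so the implication runs both ways.

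Next comes $(2)\Leftrightarrow(3)$, which I expect to be the main work. The key is apolarity: identify $(I^{-1})_d$ with the perp $(I_d)^{\perp}\subseteq S_d$ under the pairing $R_d\times S_d\to k$. Then the rational map $\varphi$ given by a basis of $(I^{-1})_d$ factors as $\varphi=\pi_\Lambda\circ v_d$, where $v_d\colon\PP^n\to\PP(S_d^{*})=\PP(R_d)$ is the $d$-uple Veronese $V$ and $\pi_\Lambda$ is the linear projection from the center $\Lambda=\PP(\langle F_1,\dots,F_r\rangle)$, since the double perp of $(I^{-1})_d$ is $I_d=\langle F_1,\dots,F_r\rangle$. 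Writing $V$ as the variety of $d$-th powers $[\ell^{d}]$, a direct differentiation gives its $(d-1)$-st osculating space $O_{d-1}(V,[\ell^{d}])=\PP(\ell\cdot R_{d-1})$, the degree-$d$ forms divisible by $\ell$, of the expected projective dimension $\binom{n+d-1}{n}-1$. Applying the osculating-space formula for a projection, at a general point the $(d-1)$-osculating space of $X=\pi_\Lambda(V)$ has dimension $\binom{n+d-1}{n}-2-\dim\bigl(\Lambda\cap O_{d-1}(V,\cdot)\bigr)$. Hence $X$ is deficient in order $d-1$, i.e. satisfies a Laplace equation of order $d-1$, if and only if $\Lambda$ meets $O_{d-1}(V,[\ell^{d}])$ for general $\ell$, i.e. if and only if there are scalars $c_i$, not all zero, and $G\in R_{d-1}$ with $\sum c_iF_i=\ell\,G$. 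For general $\ell$ this is again the linear dependence of the $\overline{F_i}$ on $H=\{\ell=0\}$, namely (2).

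The main obstacle, and the point where the hypothesis $r\le\binom{n+d-1}{n-1}$ is genuinely needed, is controlling the \emph{expected} osculating dimension: the equivalence "$X$ deficient $\Leftrightarrow\Lambda\cap O_{d-1}\ne\emptyset$" is valid only when the ambient $\PP^{\binom{n+d}{d}-r-1}$ is large enough not to cut the generic osculating dimension below $\binom{n+d-1}{n}-1$, and the inequality $\binom{n+d}{d}-r-1\ge\binom{n+d-1}{n}-1$ is exactly $r\le\binom{n+d-1}{n-1}$. Two classical inputs must be invoked with care: the $(d-1)$-osculating spaces of the Veronese are non-defective, so all deficiency of $X$ originates from $\Lambda$ and not from $V$; and the projection formula relating $O_{d-1}(X)$ to $\langle\Lambda,O_{d-1}(V)\rangle$. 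The delicate bookkeeping is keeping the apolarity dualities consistent, so that the $F_i$ literally span the center $\Lambda$ of projection and "divisibility by $\ell$" coincides with membership in $(I^{-1})_d^{\perp}$; once this is aligned, conditions (1), (2), (3) all collapse to the single statement $\sum c_iF_i=\ell\,G$.
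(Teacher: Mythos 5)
The paper offers no proof of Theorem \ref{tea}: it simply cites \cite[Theorem 3.2]{MMO}, and your argument is in substance the same as the proof given there --- elementary linear algebra identifying failure of injectivity of $\times L$ with a relation $\sum c_iF_i=LG$ for $(1)\Leftrightarrow(2)$, and apolarity realizing $X$ as the projection of the $d$-uple Veronese from the center $\Lambda=\PP(I_d)$, together with the classical computation $O_{d-1}(V,[\ell^d])=\PP(\ell\cdot R_{d-1})$, for $(2)\Leftrightarrow(3)$. Your proposal is correct, including its two genuine uses of the hypothesis $r\le\binom{n+d-1}{n-1}$ (forcing maximal rank to mean injectivity, and keeping the ambient $\PP^{\binom{n+d}{d}-r-1}$ large enough that osculating deficiency is measured against $\binom{n+d-1}{n}-1$ rather than the ambient dimension).
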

\begin{proof} See \cite[Theorem 3.2]{MMO}.
\end{proof}

Motivated by  the above results,   Mezzetti, Mir\'{o}-Roig and Ottaviani  introduced the following definitions (see \cite{MMO} and \cite{MeMR}):

\begin{defn} \rm Let $I \subset R$ be an artinian ideal generated by $r$ forms  of degree $d$, and $r \leq {n+d-1\choose n-1}$. We will say:

\begin{itemize}
\item[(i)] $I$ is a \emph{Togliatti system} if it fails the WLP in degree $d-1$.
\item[(ii)] $I$ is a \emph{monomial Togliatti system} if, in addition, $I$ can be generated
by monomials.
\item[(iii)] $I$ is a \emph{smooth Togliatti system} if, in addition, the rational variety $X$ is smooth.
\item[(iv)] A monomial Togliatti system $I$ is \emph{minimal} if  there is no proper subset of the set of generators defining a monomial Togliatti system.
\end{itemize}
\end{defn}

The names are in honor of  Togliatti who classified all
 rational surfaces parameterized by cubics and satisfying at least one Laplace equation of
order $2$ and he proved that for
$n = 2$ the only smooth
Togliatti system of cubics is $$I = (x_0^3,x_1^3,x_2^3,x_0x_1x_2)\subset k[x_0,x_1,x_2]$$
(see \cite{BK}, \cite{T1} and \cite{T2}). The systematic study of Togliatti systems was initiated in \cite{MMO}
and for recent results the reader can see \cite{MMR},  \cite{MeMR}, \cite{AMRV}, \cite{MRS} and \cite{MeMR2}.
Precisely in the latter reference the authors introduced the notion of \emph{GT-system} which we recall now.

\begin{defn}\label{Defi:GT}
  A \emph{GT-system} is an artinian ideal $I\subset R$ generated by $r$ forms $F_{1},\dotsc,F_{r}$ of degree $d$
  such that:
\begin{enumerate}
\item[i)] $I$ is a Togliatti system.
\item[ii)] The regular map $\phi_{I}\colon\PP^{n}\rightarrow \PP^{r-1}$ defined by $(F_{1},\dotsc,F_{r})$ is a Galois covering of degree $d$ with cyclic Galois group $\ZZ /d\ZZ$.
\end{enumerate}
\end{defn}

Any representation of the cyclic group $\ZZ  /d\ZZ$ as subgroup of $GL(n+1,k)$ can be diagonalized.
In particular it is represented by a diagonal matrix of the form
$$M_{\alpha_0,\alpha_1,\dotsc,\alpha_{n}}=\begin{pmatrix} e^{\alpha _0} & 0 & \dotsc & 0 \\ 0 & e^{\alpha _1} &  \dotsc & 0 \\
 &  & \dotsc &  \\ 0 & 0 &\dotsc & e^{\alpha _{n}}
\end{pmatrix}$$
where $e$ is a primitive $d$th root of $1$ and $\alpha_0,\alpha_1,\dotsc,\alpha_{n}$ are integers with
$$GCD(\alpha_0,\alpha_1,\dotsc,\alpha_{n},d)=1.$$ %we  have
It follows (see \cite[Proposition 4.6]{CMMRS}) that the above definition is equivalent to the next one:

\begin{defn} \label{GT} \rm
Fix integers $3\le d\in \ZZ $, $2\le n\in \ZZ $, with $n\leq d$, and $0\le \alpha _0\le \alpha_1\le \cdots \le \alpha_{n}\le  d$, $e$ a primitive $d$-th root of 1 and $ M_{\alpha_0,\alpha_1,\cdots,\alpha_{n}}$ a representation of $\ZZ /d\ZZ$ in $GL(n+1,k)$. A \emph{GT-system} will be an ideal $$I^d_{\alpha_0,\cdots,\alpha _{n}}\subset R$$ generated by all forms of degree $d$ invariant under the action of   $ M_{\alpha_0,\alpha_1,\dotsc,\alpha_{n}}$ provided the number of generators $\mu (I^d_{\alpha_0,\dotsc,\alpha _{n}})\le  \binom{n+d-1}{n-1}$.
\end{defn}

Finally, note that the ideal $I^d_{\alpha_0,\dotsc,\alpha_n}$ is always monomial,
i.e. a GT-system is a monomial Togliatti system.

%%%%%%%%%%%%%%%%%%%%%%%%%%%%%%%%%

\section{GT-systems  and GT-varieties}
\label{GT-systems}

Through this section we fix an integer $d \geq 4$, a $dth$-root of unity $e$ and we write $d = 2k+\varepsilon = 3k' + \rho$ with $\varepsilon \in \{0,1\}$ and $\rho \in \{0,1,2\}$. We denote $T_{d} \subset R=k[x,y,z,t]$ the ideal generated by the $\mu(T_{d})$ monomials of degree $d$ invariant under the action of the diagonal matrix $M(1,e,e^{2},e^{3}).$
In this section, we will describe the ideal $T_{d}$ and we will prove that $T_{d}$ is a $GT$-system for all $d \geq 4$. We also define the $GT$-varieties $X_d$ and their apolar varieties $Y_d$. The homogeneous ideal $I(X_d)$ of a $GT$-variety $X_d$ is a lattice ideal. A basis of the lattice and a system of generators of the lattice ideal will be effectively computed in next sections.

A monomial  $x^{\alpha}y^{\beta}z^{\delta}t^{\gamma} \in R$  of degree $d$ belongs to $T_d$ if it is invariant under the action of
$M(1,e,e^2,e^3)$ or,  equivalently if  $\alpha ,\beta , \delta, \gamma $ satisfy:
$$(*) \quad \left.\begin{matrix}
   \alpha & + & \beta& + &\delta& + &\gamma& = &d\\
          &   & \beta&+ &2\delta& + &3\gamma& = &rd
  \end{matrix} \right\},\;\; r = 0,1,2,3.$$

The solutions of $(*)$ in terms of $\gamma$ and $r$ are the following:
$$ \begin{array}{ccl}
   \alpha & = &\delta + 2\gamma + (1-r)d,\\
   \beta  & = &rd - 2\delta - 3\gamma,\\
   \gamma & \in  &\{0, \hdots, rk'+ \lfloor \frac{r\rho}{3} \rfloor\},\\
   \delta & \in &\{max\{0, (r-1)d-2\gamma\}, \hdots, \lfloor \frac{rd-3\gamma}{2}\rfloor\}.
  \end{array}$$
Given $d \geq 4$, we define
$$\W := \{(r,\gamma,\delta) \in \ZZ^{3} \mid 0 \leq r \leq 3, 0 \leq \gamma \leq rk' + \lfloor \frac{r\rho}{3} \rfloor, max\{0,d-2\gamma\} \leq \delta \leq \lfloor \frac{rd-3\gamma}{2}\rfloor\}.$$
 All monomials  $x^{\alpha}y^{\beta}z^{\delta}t^{\gamma} \in T_d$  of degree $d$  are  uniquely determined by a triple $(r,\gamma,\delta) \in \W$. In particular, 
$\mu (T_d)= \#  \W.$

\begin{rem} \rm Notice that $x^{d},y^{d},z^{d}$ and $t^{d}$ are invariant under the action of $M(1,e,e^2,e^3)$. So, the ideal $T_{d}$ is artinian.
\end{rem}

In next example,
we explicitly exhibit $T_d$ for $d = 4,5,6,7,8$ and $9$. For these values of $d$ we cover all possibilities of $\varepsilon$ and $\rho$.

\begin{ex} \rm \label{Ej:4GeneIdeal}
\noindent $\begin{array}{rcl} T_{4}& = &(x^4, y^4, xy^2z, x^2z^2, x^2yt, z^4, y z^2 t,y^2t^2, xzt^2, t^4), \;\;\mu_{I_{4}} = 10. \end{array}$

\noindent $\begin{array}{rcl}T_{5}&=& (x^5, y^5, x y^3 z, x^2 y z^2, t x^2 y^2, t x^3 z, z^5, t y z^3, t^2 y^2 z, t^2 x z^2, t^3 x y, t^5),\;\;
\mu_{I_{5}} = 12.\end{array}$

\noindent $\begin{array}{rcl}T_{6}&=&(x^6, y^6, x y^4 z, x^2 y^2 z^2, x^3 z^3, t x^2 y^3, t x^3 y z
, t^2 x^4, z^6, t y z^4, t^2 y^2 z^2, t^2 x z^3, t^3 y^3, t^3 x y z, \\ & & t^4 x^2, t^6),\;\;  \mu_{I_{6}} = 16. \end{array}$

\noindent $\begin{array}{rcl}T_{7}& = &(x^7, y^7, x y^5 z, x^2 y^3 z^2, x^3 y z^3, t x^2 y^4, t x^3 y^2 z, t x^4 z^2, t^2 x^4 y, z^7, t y z^5, t^2 y^2 z^3, t^2 x z^4, t^3 y^3 z,\\
& & t^3 x y z^2, t^4 x y^2, t^4 x^2 z, t^7),\;\;\mu_{I_{7}} = 18.\end{array}$

\noindent $\begin{array}{rcl}T_{8}& =& (x^8, y^8, x y^6 z, x^2 y^4 z^2, x^3 y^2 z^3, x^4 z^4, t x^2 y^5, t x^3 y^3 z, t x^4 y z^2, t^2 x^4 y^2, t^2 x^5 z, z^8, t y z^6, t^2 y^2\!z^4\!, \\
& & t^2 x z^5,t^3 y^3 z^2, t^3 x y z^3, t^4 y^4, t^4 x y^2 z, t^4 x^2 z^2, t^5 x^2 y,t^8),\;\; \mu_{I_{8}} = 22.\end{array}$

\noindent$\begin{array}{rcl}T_{9}& =&(x^9, y^9, x y^7 z, x^2 y^5 z^2, x^3 y^3 z^3, x^4 y z^4, t x^2 y^6, t x^3 y^4 z, t x^4 y^2 z^2, t x^5 z^3, t^2 x^4 y^3, t^2 x^5 y z, t^3 x^6, \\
& & z^9, t y z^7,
t^2 y^2 z^5, t^2 x z^6, t^3 y^3 z^3, t^3 x y z^4, t^4 y^4 z, t^4 x y^2 z^2, t^4 x^2 z^3, t^5 x y^3, t^5 x^2 y z, t^6 x^3,t^9), \\ & & \mu_{I_{9}} = 26. \end{array}$
\end{ex}

Our interest in the study of these monomial ideals relies in the following fact:

\begin{prop}\label{Prop:GaloisCover} For any $d \geq 4$,  $T_{d}$ is a $GT-$system. In particular, $T_d$ fails the WLP in degree $d-1$.
\end{prop}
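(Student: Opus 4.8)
The plan is to check that $T_d$ fulfils the hypotheses of Definition \ref{GT}, since by the equivalence with Definition \ref{Defi:GT} recorded in \cite[Proposition 4.6]{CMMRS} this at once makes $T_d$ a GT-system, hence a Togliatti system failing the WLP in degree $d-1$. Two formal checks are immediate: $M(1,e,e^2,e^3)$ is the representation $M_{0,1,2,3}$ of $\ZZ/d\ZZ$ in $GL(4,k)$, which is faithful because $GCD(0,1,2,3,d)=GCD(1,d)=1$ (so that $\varphi_{T_d}$ is the quotient by a generically free cyclic action, a Galois cover of degree $d$), and $n=3\le d$ holds since $d\ge 4$. Everything then reduces to the single inequality $\mu(T_d)\le\binom{n+d-1}{n-1}=\binom{d+2}{2}$.

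To bound $\mu(T_d)=\#\W$ I would first use the involution $(x,y,z,t)\mapsto(t,z,y,x)$, that is $(\alpha,\beta,\delta,\gamma)\mapsto(\gamma,\delta,\beta,\alpha)$. Since $3\alpha+2\beta+\delta=3d-(\beta+2\delta+3\gamma)$, a monomial of level $\beta+2\delta+3\gamma=rd$ is sent bijectively to one of level $3-r$, so the level counts obey $c_r=c_{3-r}$; as the only solution of $(*)$ with $r=0$ is $x^d$, we get $c_0=c_3=1$ and therefore $\mu(T_d)=2+2c_1$. Solving $(*)$ at level $r=1$ makes $\alpha=\delta+2\gamma\ge 0$ automatic and leaves the single constraint $\beta=d-2\delta-3\gamma\ge 0$, so that
\[
c_1=\#\{(\delta,\gamma)\in\ZZ_{\ge 0}^2 \mid 2\delta+3\gamma\le d\}.
\]
Bounding this lattice-point count crudely by the product of coordinate ranges, $c_1\le(\lfloor d/2\rfloor+1)(\lfloor d/3\rfloor+1)\le(\tfrac{d}{2}+1)(\tfrac{d}{3}+1)$, gives $\mu(T_d)\le\tfrac{d^2}{3}+\tfrac{5d}{3}+4$, and a direct comparison shows this is at most $\binom{d+2}{2}=\tfrac{(d+1)(d+2)}{2}$ exactly when $d^2-d-18\ge 0$, i.e. for all $d\ge 5$; the case $d=4$ is covered by Example \ref{Ej:4GeneIdeal}, where $\mu(T_4)=10\le 15$.

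I expect the substantive step to be the reduction from the three-variable description of $\W$ to the clean two-variable count $c_1$ through the symmetry $r\leftrightarrow 3-r$: once this is in place the comparison with $\binom{d+2}{2}$ is comfortable, as the true order of growth $\mu(T_d)\sim d^2/6$ lies well below $\binom{d+2}{2}\sim d^2/2$, and the only delicate point is the boundary value $d=4$, where the quadratic margin is smallest. A route avoiding the explicit count would instead invoke Theorem \ref{tea}, deducing the failure of the WLP from the $k$-linear dependence of the invariant monomials on a general hyperplane; but that theorem still requires $\mu(T_d)\le\binom{d+2}{2}$ among its hypotheses, so the counting cannot really be sidestepped.
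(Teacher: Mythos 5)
Your proof is correct, but it follows a genuinely different and more economical route than the paper. The paper also reduces everything to the inequality $\mu(T_d)\le\binom{d+2}{2}$ (via Definition \ref{GT}), but then it evaluates $\mu(T_d)$ essentially exactly: it writes $\mu(T_{d}) = 2 + \sum_{r=1,2}\sum_{\gamma}(\lfloor \frac{rd-3\gamma}{2}\rfloor - \max\{0,(r-1)d-2\gamma\}+1)$, treats the sums for $r=1$ and $r=2$ separately, and carries out a careful floor/ceiling computation (closed forms for $\sum\lceil\frac{3\gamma-\varepsilon}{2}\rceil$, substitution of $d=2k+\varepsilon=3k'+\rho$), ending with a sharp polynomial estimate in $k'$ and $\rho$ that is below $\binom{d+2}{2}$ uniformly for all $d\ge 4$. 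You instead exploit the involution $(x,y,z,t)\mapsto(t,z,y,x)$, which correctly sends level $r$ to level $3-r$ (since $3\alpha+2\beta+\delta=3d-(\beta+2\delta+3\gamma)$), giving $c_r=c_{3-r}$ and hence $\mu(T_d)=2+2c_1$ with $c_1=\#\{(\delta,\gamma)\in\ZZ_{\ge 0}^2\mid 2\delta+3\gamma\le d\}$ — a symmetry the paper never uses, even though its $\W$-description makes the two sums visibly asymmetric. Your crude rectangular bound $c_1\le(\lfloor d/2\rfloor+1)(\lfloor d/3\rfloor+1)$ then settles the inequality for $d\ge 5$, and the boundary case $d=4$ is legitimately closed by the explicit count $\mu(T_4)=10\le 15$ from Example \ref{Ej:4GeneIdeal}. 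The trade-off is clear: your argument is shorter, easier to verify, and makes the structural reason for the inequality transparent (growth $\sim d^2/6$ versus $\sim d^2/2$), at the price of one special case; the paper's computation is uniform in $d$ and yields an essentially exact value of $\mu(T_d)$ (useful elsewhere in the paper), at the price of a lengthy and delicate case analysis in $\varepsilon$ and $\rho$. Your additional explicit checks — $GCD(0,1,2,3,d)=1$ and $n=3\le d$ — are points the paper leaves implicit, and including them does no harm.
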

\begin{proof} By Definition \ref{GT}, we only have to check that $\mu(T_{d}) \leq \binom{2+d}{2}$. From the definition of $T_{d}$, it follows that
$$\mu(T_{d}) = 2 + \sum_{r=1,2}\sum_{\gamma=0}^{rk'+\lfloor \frac{r\rho}{3}
\rfloor}(\lfloor \frac{rd-3\gamma}{2}\rfloor - max\{0,(r-1)d-2\gamma\}+1).$$
We sum separately for $r = 1$ and
$r=2$; we have
$$
\begin{array}{rcl}
\sum_{\gamma=0}^{k'} (k - \lceil \frac{3\gamma-\varepsilon}{2}\rceil + 1) &= &(k'+1)(k+1) - \sum_{\gamma=1}^{k'} \lceil
\frac{3\gamma-\varepsilon}{2} \rceil, \text{ and}\\
\sum_{\gamma=0}^{2k' + \lfloor \frac{2\rho}{3} \rfloor}( d-\lceil \frac{3\gamma}{2} \rceil + 1) - \sum_{\gamma=0}^{k}(d -
2\gamma) &=& (d+1)(2k' + \lfloor \frac{2\rho}{3} \rfloor+1) +k(k+1)-\\
&&-d(k+1)- \sum_{\gamma=0}^{2k' + \lfloor \frac{2\rho}{3} \rfloor}
\lceil \frac{3\gamma}{2} \rceil.
\end{array}
$$
We only have to focus on the sum of the series of the type $\sum_{\gamma=1}^{N} \lceil \frac{3\gamma-\varepsilon}{2} \rceil$ with
$\varepsilon \in \{0,1\}$. We can rewrite the series as follows: if $N = 2j$,
$\sum_{\gamma=1}^{N}\lceil \frac{3\gamma-\varepsilon}{2} \rceil = \sum_{i=1}^{j} 3i + \sum_{i=1}^{j}(3i-1-\varepsilon) = j(3j + 2 - \varepsilon).$ Otherwise $N = 2j+1$,
$\sum_{\gamma=1}^{N}\lceil \frac{3\gamma-\varepsilon}{2} \rceil =\sum_{i=1}^{j}3j + \sum_{i=1}^{j+1}3j-1-
\varepsilon = (j+1)(3j + 2 -\varepsilon)$. In any case,
$$\sum_{\gamma=1}^{N}\lceil \frac{3\gamma-\varepsilon}{2}\rceil = \lceil \frac{N}{2} \rceil(3\lfloor \frac{N}{2} \rfloor + 2 - \varepsilon).$$
From this, we conclude
$$
\begin{array}{rcl}
\mu(T_{d})& =& 2 + (k'+1)(k+1) + (d+1)(2k' + \lfloor \frac{2\rho}{3} \rfloor+1)+k(k+1) -\\
&&-d(k+1) - \lceil \frac{k'}{2} \rceil(3\lfloor \frac{k'}{2} \rfloor + 2 - \varepsilon) -
\lceil \frac{2k' + \lfloor \frac{2\rho}{3} \rfloor}{2}  \rceil(3\lfloor \frac{2k' + \lfloor \frac{2\rho}{3} \rfloor}{2} \rfloor + 2).
\end{array}
$$
Substituting $d = 3k' +\rho$ by $k = \frac{3k'+\rho-\varepsilon}{2}$ we  verify that $\mu(T_{d}) \leq 2 + (k'+1)(\frac{3k'+\rho}{2} + 1)
+ (3k'+\rho+1)(2k'+2) + \frac{3k'+\rho}{2}(\frac{3k'+\rho}{2}+1)-(3k'+\rho)(\frac{3k'+\rho}{2} + 1) - \frac{k'}{2}(\frac{3(k'-1)}{2} + 1) - k'(3k'+2)
= \frac{1}{4} (20 + 6 (k')^2 + 8 \rho - \rho^2 + k' (29 + 4 \rho))$. It holds that $\frac{1}{4} (20 + 6 (k')^2 + 8 \rho - \rho^2 + k' (29 + 4 \rho)) <
\frac{1}{2}(d+2)(d+1) \Leftrightarrow 1/4 (16 - 12 (k')^2 + k' (11 - 8 \rho) + 2 \rho - 3 \rho^2) \leq 0$, which holds
 for all  $d \geq 4$.
\end{proof}

We finish this section studying the geometric properties
of the rational 3-fold associated to the GT-system $T_d$.
The morphism $\varphi_{T_{d}}: \PP^{3} \longrightarrow \PP^{\mu(T_{d})-1}$ associated to the GT-system $T_d$ is a Galois cover of degree $d$ with cyclic Galois  group $\ZZ/d\ZZ$ represented by $M(1,e,e^2,e^3)$.
In particular,  a general fibre of $\varphi_{T_{d}}$ consists of $d$ points, and hence the image of $\varphi_{T_{d}}$ is a $3$-dimensional rational projective variety.

\begin{defn} \rm  We call {\em GT-variety} and we denote it by $X_{d}$ the rational 3-fold defined as the image of $\varphi_{T_{d}}$.
\end{defn}

The morphism
$\varphi_{T_{d}}: \PP^{3} \longrightarrow \PP^{\mu(T_{d})-1}$ is unramified outside the four fundamental points of
$\PP^3$: $E_0=[1,0,0,0]$, $E_1=[0,1,0,0]$, $E_2=[0,0,1,0]$ and $E_4=[0,0,0,1]$. They are sent by
$\varphi_{T_d}$ to the singular points of $X_d$, $P_i:=\varphi(E_i)$,
$i=0,1,2,3$, that are cyclic quotient singularities: $P_0$ is of type $\frac{1}{d}(1,2,3)$, $P_1$ is of type $\frac{1}{2}(1,d-1,d-2)$, $P_2$ is of type $\frac{1}{d}(d-2,d-1,1)$ and
$P_3$ is  of type $\frac{1}{d}(d-3,d-2,d-1)$.

\begin{rem} \rm
(1)
It is worthwhile to point out that the rational 3-fold $X_d$ is also a Galois covering of $\PP^3$ with Galois
group $\ZZ/d\ZZ$. The covering map $X_d\to \PP^3$ composed with $\varphi_{T_d}$
is  $\PP^3\to\PP^3$, $[x,y,z,t]\to[x^d, y^d, z^d,t^d]$.

(2) Let $T_d^{-1}$ be the Macaulay inverse system of $T_d$ and denote by $Y_d$ the rational 3-fold defined as the closure of the image of the rational map
$\varphi _{T^{-1}_d}\colon\PP^3 \dashrightarrow \PP^{\binom{3+d}{d}-\mu (T_d)-1}$. By Theorem \ref{tea}, $Y_d$ satisfies a Laplace equation of order $d-1$.
\end{rem}

Our main goal will be to prove that the homogeneous ideal $I(X_d)$ of a GT-variety $X_d \subset \PP^{\mu(T_d)-1}$
is generated by quadrics if $d$ is even  and by quadrics and cubics if $d$ is odd (see Corollary  \ref{MAIN2})

%%%%%%%%%%%%%%%%%%%%%%%%%%%%%%%%%%%%%%%%%%%%%%%%%%

\section{The lattice of a GT-variety}
\label{GT-lattices}

As in the previous section, we fix  $d\geq 4$ and we write $d = 2k+\varepsilon = 3k'+\rho$, with $\varepsilon \in \{0,1\}$ and $\rho\in \{0,1,2\}$.
We want to determine the homogeneous ideal $I(X_{d})$ of the $GT$-threefold $X_{d} \subset \PP^{\mu(T_{d})-1}$ defined by the GT-system $T_d$. Since $X_d$ is an irreducible toric variety, $I(X_d)$ is a binomial ideal of codimension $\mu(T_d)-4$ associated to a lattice $L_{\eta}$. As we already pointed out our main goal is to prove that $I(X_d)$ is generated by quadrics if $d$ is even and by quadrics and cubics if $d$ is odd (see Corollary \ref{MAIN2}) but first we will explicitly describe a $\ZZ$-basis of the lattice $L_{\eta}$ associated to $I(X_d)$ (see Theorem \ref{Thm:MainTheorem}).

The ideal $T_{d}$ is generated by the set
$\{x^{\delta + 2\gamma+ (1-r)d}y^{rd - 2\delta - 3\gamma}z^{\delta}t^{\gamma}\; | (r,\gamma,\delta) \in \W\} \subset K[x,y,z,t]$ (see Section 3.1). All these monomials  are
uniquely determined by a triple $(r,\gamma,\delta) \in \W$ and often we will denote $x^{\delta + 2\gamma+ (1-r)d}y^{rd - 2\delta - 3\gamma}z^{\delta}t^{\gamma}$ by $w_{(r,\gamma,\delta)}$. 

\begin{defn} \rm \label{Defi:IdealI} We define the binomial ideal $I_d = (w_{(r_{1},\gamma_{1},\delta_{1})}w_{(r_{2},\gamma_{2},\delta_{2})} - w_{(r_{3},\gamma_{3},\delta_{3})}
w_{(r_{4},\gamma_{4},\delta_{4})} \;|$ $\; r_{1}+r_{2} = r_{3}+r_{4}, \; \gamma_{1} + \gamma_{2} = \gamma_{3} + \gamma_{4}, \; \delta_{1} + \delta_{2} =
\delta_{3} + \delta_{4}) \subset k[w_{(r,\gamma,\delta)}]_{(r,\gamma,\delta) \in \W}$.
\end{defn}

Let us  illustrate the above definition with an example.

\begin{ex}\label{Ej:Ideal4} \rm We take  $d = 4, \; (k = 2, k' = 1, \varepsilon = 0, \rho = 1)$. We have (Example \ref{Ej:4GeneIdeal}):
 $$T_{4} = (x^4, y^4, xy^2z, x^2z^2, x^2yt, z^4, y z^2 t,y^2t^2, xzt^2, t^4)$$ and
$$\mathcal{W}_{4} = \{(0,0,0), (1,0,0), (1,0,1), (1,0,2), (1,1,0), (2,0,4), (2,1,2), (2,2,0),(2,2,1),(3,4,0)\}.$$
Solving the equation $(r_{1},\gamma_{1},\delta_{1}) + (r_{2}, \gamma_{2},\delta_{2}) = (r_{3},\gamma_{3},\delta_{3}) + (r_{4},\gamma_{4},\delta_{4})$ in $\mathcal{W}_{4}$ we obtain twelve generators for $I_4$:

$$\begin{array}{lcl}
w_{(0,0,0)}w_{(2,0,4)} -  w_{(1,0,2)}^{2} & &
w_{(0,0,0)}w_{(2,1,2)} -  w_{(1,0,2)}w_{(1,1,0)}\\
w_{(0,0,0)}w_{(2,2,0)} -  w_{(1,1,0)}^{2}& &
w_{(1,0,0)}w_{(1,0,2)} -  w_{(1,0,1)}^{2}\\
w_{(1,0,0)}w_{(2,2,1)} -  w_{(1,0,1)}w_{(2,2,0)}& &
w_{(1,0,0)}w_{(3,4,0)} -  w_{(2,2,0)}^{2}\\
w_{(1,0,1)}w_{(2,2,1)} -  w_{(1,0,2)}w_{(2,2,0)}& &
w_{(1,0,1)}w_{(3,4,0)} -  w_{(2,2,0)}w_{(2,2,1)}\\
w_{(1,0,2)}w_{(2,1,2)} -  w_{(1,1,0)}w_{(2,0,4)}& &
w_{(1,0,2)}w_{(2,2,0)} -  w_{(1,1,0)}w_{(2,1,2)}\\
w_{(1,0,2)}w_{(3,4,0)} -  w_{(2,2,1)}^{2}& &
w_{(2,0,4)}w_{(2,2,0)} -  w_{(2,1,2)}^{2}.
\end{array}$$
\end{ex}

By construction it follows that $I_{d}$ vanishes on $X_{d}$, and hence $I_{d} \subseteq I(X_{d})$. Let $k[w_{(r,\gamma,\delta)}^{\pm}]$ be the ring of Laurent polynomials over $k$. To each binomial  $$w_{(r_{1},\gamma_{1},\delta_{1})}w_{(r_{2},\gamma_{2},\delta_{2})} - w_{(r_{3},\gamma_{3},\delta_{3})}
w_{(r_{4},\gamma_{4},\delta_{4})} \in I_{d}$$ we associated a Laurent binomial $$w_{\alpha} :=
w_{(r_{1},\gamma_{1},\delta_{1})}w_{(r_{2},\gamma_{2},\delta_{2})}w_{(r_{3},\gamma_{3},\delta_{3})}^{-1}
w_{(r_{4},\gamma_{4},\delta_{4})}^{-1} - 1\in k[w_{(r,\gamma,\delta)}^{\pm}].$$
 They generate a  Laurent binomial ideal whose associated partial character is the trivial one
$\eta: L_{\eta} \to k^{*}$, sending $\eta(m) = 1$ for all $m \in L_{\eta}$, where $L_{\eta} = \langle \alpha \;|\; w^{\alpha_{+}}- w^{\alpha_{-}}
\in I_{d}\rangle$. In turn, the partial character $\eta$ induces a lattice ideal $I_{+}(\eta) = (w^{\alpha_{+}}- w^{\alpha_{-}} \in k[w_{(r,\gamma,\delta)}]\;|\; \alpha \in L_{\eta})$.

Now we state the main result of this section.
\begin{thm}\label{Thm:MainTheorem} \begin{enumerate}
\item The lattice $L_{\eta}$ is saturated and $rk(L_{\eta}) = \mu(T_{d})-4$.
\item $I_{+}(\eta) = (\prod_{i=1}^{n}w_{(r_{i},\gamma_{i},\delta_{i})}
-\prod_{i=1}^{n}w_{(r_{i}',\gamma_{i}',\delta_{i}')} \in k[w_{(r,\gamma,\delta)}] \;\mid\; \sum_{i=1}^{n}r_{i} = \sum_{i=1}^{n}r_{i}', \;
\sum_{i=1}^{n}\gamma_{i} = \sum_{i=1}^{n}\gamma_{i}', \;
\sum_{i=1}^{n}\delta_{i} = \sum_{i=1}^{n}\delta_{i}'\}$.
\end{enumerate}
\end{thm}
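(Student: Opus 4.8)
The plan is to encode the three balance conditions in a single integer matrix and reduce the whole theorem to one lattice equality. Let $\A$ be the $4\times\mu(T_d)$ matrix whose column indexed by $(r,\gamma,\delta)\in\W$ is $(1,r,\gamma,\delta)^{t}$, and put $L:=\ker_{\ZZ}\A$. Writing a candidate generator of the ideal in part (2) as $w^{u_{+}}-w^{u_{-}}$, the conditions $\sum r_i=\sum r_i'$, $\sum\gamma_i=\sum\gamma_i'$, $\sum\delta_i=\sum\delta_i'$, together with the equal number $n$ of factors on both sides, say exactly that $\A u=0$; hence the right-hand ideal in (2) is the lattice ideal $I_{L}$. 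Moreover, since the $x,y,z,t$-exponent vector of $w_{(r,\gamma,\delta)}$ is the affine image $M(r,\gamma,\delta)^{t}+d\,(1,0,0,0)^{t}$ of $(r,\gamma,\delta)$ for a fixed integer matrix $M$ of rank $3$, a direct computation gives $\A u=0$ if and only if $w^{u_{+}}$ and $w^{u_{-}}$ are the same monomial in $x,y,z,t$; thus $I_{L}=I(X_d)$ is the toric ideal of the parametrization, which in passing re-exhibits $I_d\subseteq I(X_d)$.

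Both assertions now follow from the single equality $L_\eta=L$. Indeed $L=\ker\A$ is saturated, because $\ZZ^{\mu(T_d)}/L\cong\A(\ZZ^{\mu(T_d)})\subseteq\ZZ^{4}$ is torsion-free, and $\operatorname{rk}L=\mu(T_d)-\operatorname{rk}\A=\mu(T_d)-4$, where $\operatorname{rk}\A=4$ because the columns of $x^{d}=w_{(0,0,0)}$, $y^{d}=w_{(1,0,0)}$, $z^{d}=w_{(2,0,d)}$ and $t^{d}=w_{(3,d,0)}$ have determinant $-d^{2}\neq0$. So $L_\eta=L$ gives part (1) at once, and it gives part (2) since $I_{+}(\eta)=I_{L_\eta}=I_{L}$. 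The inclusion $L_\eta\subseteq L$ is the easy half: every defining binomial of $I_d$ yields a move $e_{(r_1,\gamma_1,\delta_1)}+e_{(r_2,\gamma_2,\delta_2)}-e_{(r_3,\gamma_3,\delta_3)}-e_{(r_4,\gamma_4,\delta_4)}$ whose defining relations are precisely $\A(\cdot)=0$.

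The entire content of the theorem therefore lies in the reverse inclusion $L\subseteq L_\eta$: every relation in $\ker\A$ is a $\ZZ$-combination of these degree-two quadratic moves. I would prove this constructively, by exhibiting a $\ZZ$-basis of $\ker\A$ made of quadratic moves (the statement later recorded as Corollary \ref{Col:Basis}). Concretely, I would order the generators $w_{(r,\gamma,\delta)}$ and, for each non-pivot generator, produce one quadratic binomial of $I_d$ having that generator as leading term with coefficient $\pm1$; the resulting $\mu(T_d)-4$ moves are linearly independent by triangularity, and since their leading coefficients are $\pm1$ in distinct coordinates they extend to a $\ZZ$-basis of $\ZZ^{\mu(T_d)}$. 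Hence $L_\eta$ is a saturated sublattice of $\ker\A$ of the full rank $\mu(T_d)-4$, which forces $L_\eta=\ker\A=L$. In carrying this out one uses that $\W$ consists of the two singleton slices $r=0$ (only $x^{d}$) and $r=3$ (only $t^{d}$) and the two genuinely two-dimensional slices $r=1,2$ cut out by $0\le\gamma\le rk'+\lfloor r\rho/3\rfloor$ and $\max\{0,(r-1)d-2\gamma\}\le\delta\le\lfloor(rd-3\gamma)/2\rfloor$.

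The main obstacle is precisely the availability of the intermediate points: to swap a pair $\{a,b\}\subset\W$ one needs $c,d\in\W$ with $c+d=a+b$, and the swaps must sometimes cross slices (for instance pairing an $r=0$ and an $r=2$ point into two $r=1$ points, or using $x^{d},t^{d}$ to pass between the $r=1$ and $r=2$ slices). Guaranteeing that the required $c,d$ satisfy the floor/ceiling inequalities defining $\W$ is a convexity-type bookkeeping demanding a careful case analysis of the boundary of each slice and of the interaction between the expressions $d=2k+\varepsilon$ and $d=3k'+\rho$. Equivalently, one may replace the explicit basis by an inductive reduction on a size measure such as $\sum_i|a_i-b_i|$, showing that whenever the two multisets differ some admissible swap strictly decreases it; the combinatorial crux — existence of an admissible swap inside $\W$ — is the same in either formulation.
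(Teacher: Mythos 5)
Your global framework is correct and, in places, cleaner than the paper's own bookkeeping: encoding the suitability conditions as $\ker_{\ZZ}\A$ for the $4\times\mu(T_d)$ matrix $\A$, checking that $\A$ has rank $4$ via the $-d^{2}$ minor, noting that $\ker\A$ is automatically saturated, identifying the right-hand ideal in (2) with the lattice ideal of $\ker\A$ and with $I(X_d)$, and reducing both assertions of Theorem \ref{Thm:MainTheorem} to the single equality $L_{\eta}=\ker\A$ -- all of this is fine, and the triangularity-plus-saturation skeleton is essentially the one the paper uses (Corollary \ref{Col:Basis} and the reduction procedure after it). The genuine gap sits exactly where you locate the ``main obstacle'': the inclusion $\ker\A\subseteq L_{\eta}$, and neither of the two executions you propose for it works as stated.

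First, your triangular system must consist of \emph{quadratic} moves only, because $L_{\eta}$ is by definition generated by the lattice points of the degree-two binomials of $I_d$; a triangular system containing cubic moves would prove saturation and full rank for the (a priori larger) lattice those moves generate, not for $L_{\eta}$. But a purely quadratic triangular system with respect to the paper's lexicographic order provably does not exist: Lemma \ref{Lemma:13=22} and the two propositions following it show that certain variables admit no special $2$-binomial at all -- e.g.\ $w_{(1,k',\lfloor\rho/2\rfloor)}$ when $\rho\neq0$, and $w_{(2,2k'-2,3)}$ when $\rho=0$ -- which is precisely why the system in Corollary \ref{Col:Basis} (which you cite as recording your statement, but which says something different) must include special $3$-binomials. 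So you must either produce a nonstandard coordinate order admitting quadratic pivots everywhere (this can be arranged for $d=5$, but doing it for all $d$ is the entire combinatorial difficulty, untouched by your sketch), or admit cubic moves into the system and then prove separately that their lattice points lie in the quadratically generated lattice $L_{\eta}$ -- again the real work. Second, your claimed ``equivalent'' reformulation -- decreasing $\sum_i|a_i-b_i|$ by admissible swaps inside $\W$ -- is not equivalent: connectivity of fibers by swaps through nonnegative states is the Markov-basis property of the quadratic moves, which is strictly stronger than lattice generation and is \emph{false} for every odd $d$. Proposition \ref{Prop:Cubics1} (see Example \ref{Ex:Ideal5} for $d=5$) exhibits pairs with equal $\A$-image, such as $w_{(0,0,0)}w_{(1,0,0)}w_{(2,0,5)}$ and $w_{(1,0,1)}w_{(1,0,2)}^{2}$, to which no nontrivial quadratic swap applies on either side, so no size measure can be decreased there. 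The daylight between ``the quadrics generate the lattice'' (Theorem \ref{Thm:MainTheorem}) and ``the quadrics connect the fibers'' (Theorem \ref{Teorema:BigTheorem}, Corollary \ref{MAIN2}) is the central phenomenon of this paper, and treating the two formulations as interchangeable assumes away exactly what has to be proved.
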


\begin{cor}\label{igualtat_ideals} $I(X_{d}) = I_{+}(\eta)$.
\end{cor}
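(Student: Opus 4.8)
```latex
\begin{proof}[Proof plan]
\textbf{Overview.} The plan is to prove the two statements of Theorem~\ref{Thm:MainTheorem} and then harvest Corollary~\ref{igualtat_ideals} almost for free. The geometric input is that $X_d$ is an irreducible toric variety, so its homogeneous ideal $I(X_d)$ is a \emph{prime} lattice ideal $I_L$ for a saturated lattice $L\subset\ZZ^{\mu(T_d)}$ (by \cite{ES}). The whole problem is therefore to identify $L$ with the combinatorially-defined lattice $L_\eta$ and to identify $I_+(\eta)$ with $I(X_d)$.

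\textbf{Step 1: the defining lattice of $X_d$.} First I would make the monomial parametrization explicit. Each generator $w_{(r,\gamma,\delta)}$ of $T_d$ is a monomial of degree $d$ in $k[x,y,z,t]$, so the map sending the coordinate $w_{(r,\gamma,\delta)}$ to its exponent vector $(\alpha,\beta,\delta,\gamma)\in\ZZ^4$ describes the torus action, and $I(X_d)=\ker(k[w_{(r,\gamma,\delta)}]\to k[x^{\pm},y^{\pm},z^{\pm},t^{\pm}])$. Because of the defining relations $(*)$, the four exponents $(\alpha,\beta,\delta,\gamma)$ are an affine-linear function of the three data $(r,\gamma,\delta)$ together with the degree constraint; explicitly $\alpha=\delta+2\gamma+(1-r)d$ and $\beta=rd-2\delta-3\gamma$. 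Hence the toric lattice $L$ consists exactly of the integer vectors $\lambda=(\lambda_{(r,\gamma,\delta)})$ with $\sum\lambda_{(r,\gamma,\delta)}=0$ that, after substituting the exponents, give the zero vector in $\ZZ^4$. Tracking which linear conditions on $(r,\gamma,\delta)$ survive, I expect this to reduce precisely to the three conservation laws
\[
\sum_i r_i=\sum_i r_i',\qquad \sum_i\gamma_i=\sum_i\gamma_i',\qquad \sum_i\delta_i=\sum_i\delta_i',
\]
since the degree condition and the two equations of $(*)$ are linearly equivalent to these three constraints. This identifies $L$ with the lattice defined by part (2) of the theorem, and identifies its ideal $I_L$ with the right-hand side of (2); this already gives $I_+(\eta)=I_L=I(X_d)$, which is Corollary~\ref{igualtat_ideals}, \emph{provided} I first show $L_\eta=L$.

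\textbf{Step 2: computing the rank and saturation (part 1).} Next I would compute $rk(L_\eta)$. By the previous step, $L$ is the kernel of the $\ZZ$-linear map $\ZZ^{\mu(T_d)}\to\ZZ^3$ sending $e_{(r,\gamma,\delta)}\mapsto(r,\gamma,\delta)$, intersected with the degree-$0$ hyperplane; but since $(r,\gamma,\delta)=(0,0,0)$ occurs (the monomial $x^d$) and the degree map factors, one checks the map $e_{(r,\gamma,\delta)}\mapsto(1,r,\gamma,\delta)\in\ZZ^4$ has image of full rank $4$. Hence $rk(L)=\mu(T_d)-4$. Saturation is then automatic: $L$ is defined as a kernel of a map to a free abelian group $\ZZ^4$, so $\ZZ^{\mu(T_d)}/L$ embeds in $\ZZ^4$ and is torsion-free, i.e.\ $L$ is saturated. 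Alternatively this follows directly from \cite{ES}, since $X_d$ is irreducible (Proposition~\ref{Prop:GaloisCover} and the Galois-cover description) forces its defining lattice to be saturated.

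\textbf{Step 3: $L_\eta=L$ (the main obstacle).} The delicate point is the inclusion $L\subseteq L_\eta$. By definition $L_\eta$ is generated only by the \emph{quadratic} exchange vectors coming from $I_d$, i.e.\ relations of the form $w_{(r_1,\gamma_1,\delta_1)}+w_{(r_2,\gamma_2,\delta_2)}-w_{(r_3,\gamma_3,\delta_3)}-w_{(r_4,\gamma_4,\delta_4)}$ with matching coordinate sums. A priori these degree-two moves span a sublattice of $L$ that could be strictly smaller. The hard part is therefore to show that every element of $L$ is an integer combination of these quadratic moves, equivalently that the three conservation laws are already enforced by pairwise (degree-two) exchanges among the triples in $\W$. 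I would argue this combinatorially: given any $\lambda\in L$, split it into its positive and negative parts and show that one can always find two indices on each side whose triples satisfy a balancing $(r_1,\gamma_1,\delta_1)+(r_2,\gamma_2,\delta_2)=(r_3,\gamma_3,\delta_3)+(r_4,\gamma_4,\delta_4)$ inside $\W$, thereby reducing the support; this is where the explicit geometry of $\W$ (its range $0\le r\le3$, the bounds on $\gamma$ and $\delta$, and the verification that the requisite intermediate triples lie in $\W$) must be used, and I expect the endpoint $r$-cases to require the most care. Once $L_\eta\supseteq L$ is established, combined with the trivial $L_\eta\subseteq L$ (every $I_d$-relation lies in $L$ by Definition~\ref{Defi:IdealI}), we get $L_\eta=L$, which completes part~(1) and makes part~(2) just the explicit description of $I_L$. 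Finally, Corollary~\ref{igualtat_ideals} is immediate: $I_+(\eta)=I_{L_\eta}=I_L=I(X_d)$, the last equality because $X_d$ is toric and $L$ is its saturated defining lattice.
\end{proof}
```
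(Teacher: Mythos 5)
Your Steps 1 and 2 are sound, and they package the ``easy'' half of the argument differently from the paper: you identify $I(X_d)$ outright with the toric ideal $I_L$ of the kernel lattice $L$ cut out by the three conservation laws, with saturation and $\mathrm{rk}(L)=\mu(T_d)-4$ coming for free from the embedding $\ZZ^{\mu(T_d)}/L\hookrightarrow \ZZ^{4}$. The paper never touches $L$ directly: it proves that $L_{\eta}$ is saturated of rank $\mu(T_d)-4$ (Theorem \ref{Thm:MainTheorem}(1), via the triangular system of special binomials), deduces from \cite{ES} that $I_{+}(\eta)$ is prime and defines an irreducible $3$-dimensional variety, observes $I_{+}(\eta)\subseteq I(X_d)$ using Theorem \ref{Thm:MainTheorem}(2), and concludes by irreducibility of $X_d$. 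Either reduction of the Corollary to the Theorem is legitimate; the problem is how you propose to prove the Theorem.

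The gap is Step 3. You assert that ``every element of $L$ is an integer combination of the quadratic moves, \emph{equivalently} the conservation laws are already enforced by pairwise (degree-two) exchanges,'' and you propose to prove this by locating, inside $\mathrm{supp}(\lambda_{+})$ and $\mathrm{supp}(\lambda_{-})$, two triples on each side with equal sums and cancelling them. These two formulations are not equivalent (lattice generation versus the Markov-basis property; this is exactly the distinction recalled in the paper's introduction via \cite{DST}), and the second one, which is what your support-reduction actually proves, is \emph{false} for odd $d$. Concretely, for $d=5$ take
$$\lambda=(0,0,0)+(1,0,0)+(2,0,5)-(1,0,1)-2(1,0,2)\in L.$$
The pairwise sums available on the positive side are $(1,0,0)$, $(2,0,5)$, $(3,0,5)$, and on the negative side $(2,0,3)$, $(2,0,4)$; no balancing exists (even allowing the second pair to be an arbitrary pair in $\W$), so your reduction halts at the first step. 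This $\lambda$ is precisely the lattice point of the cubic $w_{(0,0,0)}w_{(1,0,0)}w_{(2,0,5)}-w_{(1,0,1)}w_{(1,0,2)}^{2}$, one of the obstructions of Example \ref{Ex:Ideal5} and Proposition \ref{Prop:Cubics1} showing $I_d\subsetneq I(X_d)$ for odd $d$. The statement you actually need is true but strictly weaker: for instance
$$\lambda=\alpha_{1}+\alpha_{2}-\alpha_{3},$$
where $\alpha_{1}=(0,0,0)+(2,1,3)-(1,0,2)-(1,1,1)$, $\alpha_{2}=(1,0,0)+(1,1,1)-(1,0,1)-(1,1,0)$ and $\alpha_{3}=(1,0,2)+(2,1,3)-(1,1,0)-(2,0,5)$ are lattice points of suitable $2$-binomials of Example \ref{Ex:Ideal5}; note this combination introduces triples outside $\mathrm{supp}(\lambda)$ and does not respect the positive/negative split. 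Proving lattice generation in general requires either the paper's machinery (the triangular system of special $2$- and $3$-binomials together with the induction on $\min\{\mathrm{supp}\}$), or a direct proof that $L_{\eta}$ itself is saturated of rank $\mu(T_d)-4$, after which $L=L_{\eta}$ follows formally because $L/L_{\eta}$ is finite and embeds in the torsion-free group $\ZZ^{\mu(T_d)}/L_{\eta}$. Your Step 2 establishes saturation only for $L$, not for $L_{\eta}$, so that shortcut is not available as written.
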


\begin{proof} Theorem \ref{Thm:MainTheorem} (1) implies that $I_{+}(\eta)$ is a prime ideal of codimension $4$ (see \cite[Corollary 2.5 and 2.6]{ES}).
From Theorem \ref{Thm:MainTheorem} (2) it follows that $I_{+}(\eta)$ vanishes in $X_{d}$, i.e. $I_{+}(\eta) \subset I(X_d)$. Therefore, $I_{+}(\eta)$ is the homogeneous ideal of an irreducible 3-dimensional variety contained in $X_d$. Since $X_d$ is irreducible we conclude that  $I_{+}(\eta) = I(X_d)$ which proves what we want.
\end{proof}

We trivially have $I_d\subset  I_{+}(\eta)=I(X_d)$. In next section we will discuss whether the equality holds. Now we devote the rest of this section to prove Theorem \ref{Thm:MainTheorem}.

\begin{defn}\rm
Fixed $n \geq 2$, we define a {\em suitable $n$-binomial} to be a nonzero  binomial $w^{\alpha} = w^{\alpha_{+}} - w^{\alpha_{-}} = \prod_{i=1}^{n}
w_{(r_{i},\gamma_{i},\delta_{i})} - \prod_{i=1}^{n} w_{(r_{i}',\gamma_{i}',\delta_{i}')}$ satisfying $\sum_{i=1}^{n}r_{i} = \sum_{i=1}^{n}r_{i}'$, $\sum_{i=1}^{n}
\gamma_{i} = \sum_{i=1}^{n}\gamma_{i}'$ and $\sum_{i=1}^{n}\delta_{i} = \sum_{i=1}^{n}
\delta_{i}'$.
\end{defn}

\begin{rem} \rm Any suitable $n$-binomial $w^{\alpha}$ vanishes in $X_{d}$. Therefore,  all suitable $n$-binomials belong to $I(X_{d})$. Moreover, the generators of
$I_{d}$ are suitable $2$-binomials.
\end{rem}

\begin{defn} \rm
Given a suitable $n$-binomial $w^{\alpha} = w^{\alpha_{+}}-w^{\alpha_{-}}$, we note $supp_{+}(w^{\alpha})$ (respectively $supp_{-}(w^{\alpha})$) the support of the monomial $w^{\alpha_{+}}$ (respectively
support of $w^{\alpha_{-}}$). We say that $w^{\alpha}$ is {\em non trivial} if $supp_{+}(w^{\alpha}) \cap supp_{-}(w^{\alpha}) = \emptyset$. Otherwise, we say that $w^{\alpha}$ is {\em trivial}.
\end{defn}
\begin{ex}\rm The set of generators for $I_{4}$ in Example \ref{Ej:Ideal4} are the set of all non-trivial suitable $2$-binomials.
\end{ex}

\begin{defn} \rm \label{Defi:MonAdmits} Let $m = \prod_{i=1}^{n}w_{(r_{i},\gamma_{i},\delta_{i})} \in k[w_{(r,\gamma,\delta)}]$ be a monomial of degree $n$. We say that $m$ {\em admits a suitable  $n$-binomial} if there exists a monomial $m'=\prod_{i=1}^{n}w_{(r_{i}',\gamma_{i}',\delta_{i}')}\in k[w_{(r,\gamma,\delta)}]$ of degree $n$ such that $m-m'$ is a non trivial suitable $n$-binomial.
\end{defn}

Let us order the elements $(r,\gamma,\delta) \in \W$ lexicographically.

\begin{defn} \rm We say that $w_{(r,\gamma,\delta)} \in k[w_{(r,\gamma,\delta)}]$ admits a {\em special} $n$-binomial if there exists a non trivial suitable $n$-binomial $m-m' \in I_{+}(\eta)$ such that $(r,\gamma,\delta)= min \{supp(m-m')\}$.
\end{defn}

\begin{ex} \rm The element $w_{(0,0,0)} \in k[w_{(r,\gamma,\delta)}]$ admits a special $2$-binomial. Indeed, $w_{(0,0,0)}w_{(2,2k',0)}-w_{(1,k',0)}w_{(1,k',0)}$ is a non trivial suitable $2$-binomial and $(0,0,0)=min\{(0,0,0),$ $(2,2k',0),(1,k',0)\}$. While clearly the element $w_{(3,d,0)}$ does not admit a special $n$-binomial for any $n \geq 2$.
\end{ex}

\begin{ex} \rm For $d=4$, the set of elements admitting a special  $2$-binomial is ${\mathcal{W}_{4}}-\{(1,1,0),(2,1,2),(2,2,0),$ $(2,2,1),(3,4,0)\}$ while the element $(1,1,0)$ admits a special 3-binomial: $w_{(1,1,0)}w_{(2,1,2)}w_{(3,4,0)}-w_{(2,2,0)}w^2_{(2,2,1)}.$
\end{ex}

\begin{lem}  \label{Lemma:13=22} Each monomial $m = w_{(1,\gamma,\delta)}w_{(3,d,0)} \in k[w_{(r,\gamma,\delta)}]$ admits a special $2$-binomial except: $(\gamma ,\delta) =( k', \lfloor \frac{\rho}{2} \rfloor )$ if $\rho \neq 0$, and $\gamma = \delta = 0$ if $\varepsilon = 1$.
\end{lem}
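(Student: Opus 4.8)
The plan is to reduce the statement to a purely combinatorial question about integer points of $\W$, and then split the work into an existence part and a non-existence part. Throughout, a \emph{special $2$-binomial} for the monomial $m=w_{(1,\gamma,\delta)}w_{(3,d,0)}$ means a non trivial suitable $2$-binomial $m-m'$ with $(1,\gamma,\delta)=\min\{supp(m-m')\}$.

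First I would record that $w_{(3,d,0)}$ is the \emph{unique} generator of $T_d$ with $r=3$. Indeed, subtracting three times the degree equation from the weight equation $\beta+2\delta+3\gamma=3d$ yields $3\alpha+2\beta+\delta=0$, hence $\alpha=\beta=\delta=0$ and the monomial is $t^d$. Consequently, in any non trivial suitable $2$-binomial $m-m'$ the two factors of $m'$ have $r$-values summing to $4$; the choice $\{1,3\}$ forces a factor $w_{(3,d,0)}$ and thus $supp_{+}\cap supp_{-}\neq\emptyset$, so the only admissible choice is $m'=w_{(2,a,b)}w_{(2,c,e)}$ with both $r$-values equal to $2$. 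Such a binomial is suitable exactly when $a+c=\gamma+d$ and $b+e=\delta$ (the $r$-sums agree automatically since $2+2=1+3$), and $(1,\gamma,\delta)$ is then forced to be the lexicographic minimum of the support, being the only index with $r=1$. So the lemma becomes: \emph{there exist $(2,a,b),(2,c,e)\in\W$ with $a+c=\gamma+d$ and $b+e=\delta$.}

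Next I would translate membership in $\W$ into inequalities: $(2,a,b)\in\W$ iff $a\ge 0$, $b\ge 0$, $b\ge d-2a$ and $b\le d-\lceil\frac{3a}{2}\rceil$, equivalently the four exponents of the associated monomial are non negative; in particular $a,c\le A:=\lfloor\frac{2d}{3}\rfloor=2k'+\lfloor\frac{2\rho}{3}\rfloor$, so $a+c=\gamma+d\le 2A$ is a necessary condition. Writing $c=\gamma+d-a$ and $e=\delta-b$ converts the problem into the feasibility of an explicit planar lattice region in $(a,b)$. For the existence part, in the non exceptional range I would exhibit a concrete split $(a,b,c,e)$ and verify the four inequalities, organising the check by the residue $\rho$ and using $\delta\le\lfloor\frac{d-3\gamma}{2}\rfloor$; the natural recipe is to balance the $t$-exponents $a,c$ about $\frac{\gamma+d}{2}$, then place the $z$-exponents $b,e$, adjusting the split by $\pm 1$ near the extreme values $\gamma=k'$ or $\delta$ maximal.

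Finally, for the three excluded families I would show the region is empty. If $\rho=1$ and $\gamma=k'$ (which forces $\delta=0$, since then $\delta\le\lfloor\frac12\rfloor$), then $a+c=k'+d=4k'+1>4k'=2A$, so no $a,c\le A$ exist. If $\rho=2$ and $(\gamma,\delta)=(k',1)$, then $a+c=4k'+2=2A$ forces $a=c=A=2k'+1$, and $0\le b\le d-\lceil\frac{3A}{2}\rceil=0$ forces $b=e=0$, contradicting $b+e=1$. If $\varepsilon=1$ and $(\gamma,\delta)=(0,0)$, then $b=e=0$ forces $a,c\ge\lceil\frac{d}{2}\rceil=\frac{d+1}{2}$, whence $a+c\ge d+1>d$, a contradiction. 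I expect the main obstacle to be the existence part: making the explicit split uniform across the parity of $d$ and the residues $\rho\in\{0,1,2\}$ requires careful bookkeeping of the floors and ceilings, and the degenerate boundary cases—where the feasible region shrinks to a single lattice point—are precisely those adjacent to the three exceptions, so they have to be checked individually.
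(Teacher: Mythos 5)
Your reduction and your treatment of the exceptional cases are correct, and in places more carefully justified than the paper's own proof: you verify (via the identity $3\alpha+2\beta+\delta=0$) that $(3,d,0)$ is the \emph{only} index with $r=3$, hence that any candidate $m'$ must have the form $w_{(2,a,b)}w_{(2,c,e)}$ with $a+c=\gamma+d$ and $b+e=\delta$, and that lex-minimality of $(1,\gamma,\delta)$ and non-triviality are then automatic -- steps the paper merely asserts. Likewise your three impossibility arguments ($\rho=1$, $\gamma=k'$; $\rho=2$, $(\gamma,\delta)=(k',1)$; $\varepsilon=1$, $(\gamma,\delta)=(0,0)$) are complete and agree with the paper's.

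The genuine gap is the existence half of the lemma: that \emph{every} non-exceptional $(1,\gamma,\delta)$ admits such a split is never proved, only announced (``I would exhibit a concrete split and verify the four inequalities \dots the natural recipe is to balance \dots adjusting the split by $\pm 1$''). This is the bulk of the statement and exactly where its content lies, since the lemma claims the three families are the \emph{only} exceptions; your impossibility arguments show they are among the exceptions, not that nothing else fails. What is missing is precisely what the paper's proof carries out: take $a=\lfloor\frac{d+\gamma}{2}\rfloor$, $c=\lceil\frac{d+\gamma}{2}\rceil$, establish the inequality $\lfloor\frac{2d-3a}{2}\rfloor+\lfloor\frac{2d-3c}{2}\rfloor\le\lfloor\frac{d-3\gamma}{2}\rfloor$ with equality unless $a$ and $c$ are both odd, check that the attainable values of $b+e$ form an integer interval containing $\delta$ (whose lower end $\max\{0,d-2a\}+\max\{0,d-2c\}$ is where the $\varepsilon=1$, $\gamma=\delta=0$ exception appears), and handle the both-odd case by passing to $a+1$, $c-1$. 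These verifications, uniform in $\rho$ and in the parity of $d$, together with the degenerate boundary cases you yourself flag as ``having to be checked individually'', are not done; until they are, the lemma is not established by your argument.
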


\begin{proof} Fix $(1,\gamma,\delta) \in \W$. If there exists such monomial $m'$, it has to be of the form
$w_{(2,\gamma_{1},\delta_{1})}w_{(2,\gamma_{2},\delta_{2})}$ with
$0 \leq \gamma_{i} \leq 2k' + \lfloor \frac{2\rho}{3} \rfloor$, $
\max\{0,d-2\gamma_{i}\} \leq \delta_{i} \leq \lfloor \frac{2d-3\gamma_{i}}{2} \rfloor$,  $i = 1,2$, $\gamma + d=\gamma _1+\gamma_2$ and $\delta =\delta _1+\delta _2$. From this follows that when $\rho = 1$ and $\gamma =  k'$, there are no $\gamma_{1}$ and $\gamma_{2}$ summing $\gamma + d=4k'+1$. While for $\rho = 2$, we must have $\gamma_{1} = \gamma_{2} = 2k'+1$. But then $\delta_{1} = \delta_{2} = 0$, which cannot sum $\delta = 1$.

For the rest of $\gamma$'s, we set $\gamma_{1}:=  \lfloor \frac{d+\gamma}{2} \rfloor $ and $\gamma_{2} := \lceil \frac{d + \gamma}{2} \rceil$.
Observe that we always have $k \leq \gamma_{1},\gamma_{2} \leq 2k' + \lfloor \frac{\rho}{2} \rfloor$. From the properties of the floor and ceiling functions we have
$$\lfloor \frac{2d - 3\gamma_{1}}{2} \rfloor + \lfloor \frac{2d - 3\gamma_{2}}{2} \rfloor \leq \lfloor \frac{4d - 3(d + \gamma)}{2} \rfloor
= \lfloor \frac{d - 3\gamma}{2} \rfloor,$$
where the equality holds when $\gamma_{1}$ and $\gamma_{2}$ are not both odd. If the equality holds  we can  find values $\delta_{1}$ and $\delta_{2}$ such that $\delta_{1} + \delta_{2} = \delta$, as long as
$\delta \geq \max\{0,d-2\gamma_{1}\} + \max\{0,d-2\gamma_{2}\}$. The last condition always happens except for $\gamma = \delta = 0$ when $\varepsilon = 1$.

Finally, if $\gamma_{1}$ and $\gamma_{2}$ are odd (and, hence,
$\gamma \geq 2$),  the result follows taking $m' = w_{(2,\gamma_{1}+1, \lfloor \frac{2d-3(\gamma_{1}+1)}{2} \rfloor)}w_{(2,\gamma_{2}-1, \lfloor \frac{2d-3(\gamma_{2}-1)}{2} \rfloor)}$.
\end{proof}

\begin{prop}  All $w_{(1,\gamma,\delta)} \in k[w_{(r,\gamma,\delta)}]$  admit a special 2-binomial or 3-binomial.
\end{prop}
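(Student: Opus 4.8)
The plan is to build on Lemma \ref{Lemma:13=22}, which already settles all but finitely many indices. Indeed, for every $(1,\gamma,\delta)\in\W$ outside the short exceptional list of that lemma we obtain a suitable $2$-binomial $w_{(1,\gamma,\delta)}w_{(3,d,0)}-w_{(2,\gamma_1,\delta_1)}w_{(2,\gamma_2,\delta_2)}$, and I would first observe that it is automatically \emph{special}: all four indices have $r\geq 1$, and $(1,\gamma,\delta)$ is the unique one with $r=1$, hence it is the lexicographic minimum of the support; moreover the positive part only involves $r\in\{1,3\}$ while the negative part only involves $r=2$, so the two supports are disjoint and the binomial is non trivial. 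Thus every non-exceptional $w_{(1,\gamma,\delta)}$ admits a special $2$-binomial, and it remains only to treat the exceptional indices, namely $(1,k',\lfloor\frac{\rho}{2}\rfloor)$ when $\rho\neq 0$ and $(1,0,0)$ when $\varepsilon=1$.

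The two exceptional families behave differently, and recognising this is the conceptual heart of the argument. The index $(1,0,0)=y^{d}$ (which occurs only for $d$ odd, so $d\geq 5$ and $k\geq 2$) is the lexicographically \emph{smallest} element of $\W$ with $r=1$, so a special binomial for it may freely involve other indices with $r=1$. This extra room lets me discard the forbidden partner $w_{(3,d,0)}$ and instead transfer a single unit of $z$-degree, giving the special $2$-binomial
\[
w_{(1,0,0)}\,w_{(2,k,2)}-w_{(1,0,1)}\,w_{(2,k,1)}.
\]
By contrast, $(1,k',\lfloor\frac{\rho}{2}\rfloor)$ is the lexicographically \emph{largest} element of $\W$ with $r=1$, since for $r=1$ one has $\gamma\leq k'$ and, when $\gamma=k'$, $\delta\leq\lfloor\frac{\rho}{2}\rfloor$. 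Consequently every other factor of any special binomial for it must have $r\geq 2$; in a $2$-binomial $w_{(1,k',\lfloor\frac{\rho}{2}\rfloor)}w_B=w_Cw_D$ this forces $r_B=3$ and $r_C=r_D=2$, i.e. $B=(3,d,0)$, which is exactly the pairing ruled out by Lemma \ref{Lemma:13=22}. Hence a genuine $3$-binomial is unavoidable here.

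For these last indices I propose the explicit $3$-binomials
\[
w_{(1,k',0)}\,w_{(2,2k'-1,2)}\,w_{(3,d,0)}-w_{(2,2k',0)}\,w_{(2,2k',1)}^{2}\qquad(\rho=1),
\]
\[
w_{(1,k',1)}\,w_{(2,2k',1)}\,w_{(3,d,0)}-w_{(2,2k'+1,0)}^{2}\,w_{(2,2k',2)}\qquad(\rho=2),
\]
the first of which specialises for $k'=1$ to the $3$-binomial of $(1,1,0)$ exhibited after Lemma \ref{Lemma:13=22}. The verification then reduces to three routine checks for each displayed binomial. First, the balancing conditions $\sum r_i=\sum r_i'$, $\sum\gamma_i=\sum\gamma_i'$, $\sum\delta_i=\sum\delta_i'$ hold; the only point worth recording is that the $\gamma$-sum equals $6k'$ in the $\rho=1$ case and $6k'+2$ in the $\rho=2$ case on both sides, independently of $k'$, which is what makes a uniform construction possible. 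Second, each index lies in $\W$. Third, the positive and negative supports are disjoint and $(1,\gamma,\delta)$ is the unique index with $r=1$, hence the lexicographic minimum, so the binomial is a non trivial special one.

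I expect the $\W$-membership test of the second step to be the only real obstacle: one must confirm, uniformly in $d$, that $\gamma\leq rk'+\lfloor\frac{r\rho}{3}\rfloor$ and $\max\{0,(r-1)d-2\gamma\}\leq\delta\leq\lfloor\frac{rd-3\gamma}{2}\rfloor$ after lowering a $\gamma$-coordinate to $2k'-1$ (case $\rho=1$) or taking the extreme value $2k'+1$ (case $\rho=2$), and that the partners $(2,k,2),(2,k,1)$ used for $(1,0,0)$ satisfy $k\leq 2k'+\lfloor\frac{2\rho}{3}\rfloor$ together with $2\leq\lfloor\frac{2d-3k}{2}\rfloor$. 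These all reduce to a handful of floor inequalities in $k'$ and $\rho$ that one checks hold for every $d\geq 4$, completing the proof.
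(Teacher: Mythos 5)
Your proposal is correct and follows essentially the same route as the paper: reduce to the three exceptions of Lemma \ref{Lemma:13=22} and then exhibit explicit special binomials, your two special $3$-binomials for $(1,k',\lfloor \rho/2 \rfloor)$ being literally the ones in the paper's proof. The only (cosmetic) difference is that for $(1,0,0)$ with $d$ odd you give a single uniform special $2$-binomial $w_{(1,0,0)}w_{(2,k,2)}-w_{(1,0,1)}w_{(2,k,1)}$, which indeed checks out since $k\geq 2$ and $d-2k=1$, whereas the paper lists three $\rho$-dependent ones.
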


\begin{proof} It is enough to treat the 3 exemptions of Lemma \ref{Lemma:13=22}. For $\varepsilon =1$ and $(1,\gamma,\delta)=(1,0,0)$ it is enough to observe that
  $w_{(1,0,0)}w_{(2,2k',0)}-w_{(1,1,0)}w_{(2,2k'-1,0)}$ if $\rho = 0$, $w_{(1,0,0)}w_{(2,2k',1)}-w_{(1,0,1)}w_{(2,2k',0)}$ if $\rho = 1$ and
$w_{(1,0,0)}w_{(2,2k'+1,0)}-w_{(1,1,0)}w_{(2,2k',0)}$ if $\rho = 2$ are special $2$-binomials.

 For $(1,\gamma,\delta)=(1,k',\lfloor \frac{\rho}{2} \rfloor)$  and $\rho \neq 0$, the monomial  $w_{(1,k',\lfloor \frac{\rho}{2} \rfloor)}$ does not  admit a special $2$-binomial. However, $w_{(1,k',0)}w_{(2,2k'-1,2)}w_{(3,d,0)} - w_{(2,2k',0)}w_{(2,2k',1)}^{2}$ for $\rho =1$ and $w_{(1,k',1)}w_{(2,2k',1)}$ $w_{(3,d,0)}-w_{(2,2k',2)}w_{(2,2k'+1,0)}^{2}$ for $\rho =2$ are special  $3$-binomials.
\end{proof}

\begin{prop} All $w_{(2,\gamma,\delta)} \in k[w_{(r,\gamma,\delta)}]$ admit a special 2-binomial or 3-binomial except
$\{w_{(2,2k'-1,0)},w_{(2,2k'-1,1,)},w_{(2,2k',0)}\}$ if $\rho = 0$, $\{w_{(2,2k'-1,2)},w_{(2,2k',0)},w_{(2,2k',1)}\}$ if $\rho = 1$, and
$\{w_{(2,2k',1)},w_{(2,2k',2)},w_{(2,2k'+1,0)}\}$ if $\rho = 2$.
\end{prop}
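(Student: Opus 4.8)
The plan is to turn the statement into a two-dimensional combinatorial problem about the indices with $r=2$, separate an existence part (exhibiting explicit special $2$- and $3$-binomials) from a non-existence part (for the three listed indices), and settle the latter by a rank computation.

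\emph{Reduction.} First I would record from the description of $\W$ that for $r=2$ the admissible $\delta$ run over $[\max\{0,d-2\gamma\},\lfloor(2d-3\gamma)/2\rfloor]$ with $\gamma\le 2k'+\lfloor 2\rho/3\rfloor$, and that the only index with $r=3$ is $(3,d,0)$. The key reduction is: if $w_{(2,\gamma,\delta)}$ is the lex-minimum of the support of a non trivial suitable $n$-binomial, then, since $r$ is the most significant lex-coordinate, every factor has $r\ge 2$, hence $r\in\{2,3\}$. Comparing first coordinates shows the number of factors equal to $(3,d,0)$ is the same on both sides; disjointness of supports then forces that number to be $0$. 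Thus every factor is some $w_{(2,\gamma',\delta')}$ with $(\gamma',\delta')\ge_{\mathrm{lex}}(\gamma,\delta)$, and the balance conditions reduce to $\sum\gamma_i=\sum\gamma_i'$, $\sum\delta_i=\sum\delta_i'$ inside $\mathcal R=\{(\gamma,\delta):(2,\gamma,\delta)\in\W\}\subset\ZZ^2$. So admitting a special binomial is equivalent to the existence of a non trivial relation among the points of $\mathcal R$ lying lex-above $(\gamma,\delta)$, with $(\gamma,\delta)$ as lex-minimum.

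\emph{Existence.} For the bulk I would use the vertical convexity relation
\[ w_{(2,\gamma,\delta)}\,w_{(2,\gamma,\delta+2)}-w_{(2,\gamma,\delta+1)}^{2}, \]
a special $2$-binomial whenever $\delta+2\le\lfloor(2d-3\gamma)/2\rfloor$. For the second-from-top $\delta$ of a column I would borrow a point from a fuller column $\gamma'>\gamma$,
\[ w_{(2,\gamma,\delta)}\,w_{(2,\gamma',\delta'+1)}-w_{(2,\gamma,\delta+1)}\,w_{(2,\gamma',\delta')}, \]
and for a top $\delta$ the analogous move into columns $\gamma+1,\gamma+2$; in every case $(\gamma,\delta)$ is the lex-minimum by construction and one only checks that the chosen neighbours lie in $\mathcal R$. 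This settles all indices except a neighbourhood of the top, which I would split by $\rho$ (and, for the exact floor values, $\varepsilon$ and the parity of $\gamma$): for $\rho\in\{1,2\}$ explicit $2$-binomials persist, while for $\rho=0$ the single index $(2,2k'-2,3)$ admits no $2$-binomial and is handled by the $3$-binomial
\[ w_{(2,2k',0)}\,w_{(2,2k'-1,0)}\,w_{(2,2k'-2,3)}-w_{(2,2k'-1,1)}^{3}, \]
whose lex-minimum is $(2,2k'-2,3)$.

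\emph{Non-existence and the main obstacle.} Let $M_1>_{\mathrm{lex}}M_2>_{\mathrm{lex}}M_3$ be the three lex-largest points of $\mathcal R$; I would first check these are exactly the listed triples in each case and that $M_1,M_2,M_3$ are affinely independent in $\ZZ^2$ (a direct computation of the two differences $M_2-M_1,M_3-M_1$ gives a nonzero determinant for $\rho=0,1,2$). By the reduction, a special $n$-binomial with minimum $M_i$ can only involve $M_1,\dots,M_i$; for $i\le 3$ the vectors $(1,M_1),\dots,(1,M_i)$ are then linearly independent in $\ZZ^3$, so $e_{M_j}\mapsto(1,M_j)$ is injective and the lattice of suitable relations supported on $\{M_1,\dots,M_i\}$ is zero. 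Hence $M_1,M_2,M_3$ admit no special binomial of any order, in particular none of order $2$ or $3$. The part I expect to be most delicate is the transition zone of the existence argument: pinning down, uniformly in $\rho,\varepsilon$ and the parity of $\gamma$, exactly which top-of-column indices still admit a $2$-binomial and which force the $3$-binomial above, and verifying that the explicit neighbours remain inside $\mathcal R$ for small $k'$, where the region degenerates. The affine-independence computation makes the count of precisely three exceptions transparent, but matching it to explicit low-order binomials requires the same floor/ceiling bookkeeping as in Lemma \ref{Lemma:13=22}.
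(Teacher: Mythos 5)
Your architecture is sound and, on the non-existence side, actually goes beyond the paper. The reduction is correct: if $(2,\gamma,\delta)$ is the lex-minimum of the support, every index occurring has $r\in\{2,3\}$, the balance $\sum r_i=\sum r_i'$ forces the multiplicity of $(3,d,0)$ to agree on both sides, and non-triviality then kills it, so everything lives in $\mathcal{R}\subset\ZZ^2$. Your rank argument --- the three listed triples are exactly the three lex-largest points $M_1,M_2,M_3$ of $\mathcal{R}$, and $(1,M_1),(1,M_2),(1,M_3)$ have determinant $\pm 1$ for each $\rho$, so no nonzero suitable relation is supported on them --- is complete and correct, and it is a genuine addition: the paper's proof of this proposition only constructs binomials for the non-excluded indices and never argues that the excluded three admit no special binomial (your argument even shows they admit no special $n$-binomial for any $n$). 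Your existence constructions are organized differently as well: you work column-by-column (the vertical relation $w_{(2,\gamma,\delta)}w_{(2,\gamma,\delta+2)}-w_{(2,\gamma,\delta+1)}^2$ plus borrowing from a fuller column), whereas the paper always pairs $w_{(2,\gamma,\delta)}$ with the fixed lex-largest variable $w_{(2,2k'+\lfloor\rho/2\rfloor,\,\lceil\rho/2\rceil-\lfloor\rho/2\rfloor)}$ and shifts $\gamma$ by one, or by two in the parity-obstructed cases. Both routes hit the same delicate boundary.

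The one place where your text, as written, would break is the top-of-column case; there, ``one only checks that the chosen neighbours lie in $\mathcal{R}$'' understates the issue. Write $T(\gamma)=\lfloor(2d-3\gamma)/2\rfloor$. For $\gamma$ even, $T(\gamma)=(2d-3\gamma)/2$, $T(\gamma+1)=T(\gamma)-2$, $T(\gamma+2)=T(\gamma)-3$, hence $T(\gamma)+T(\gamma+2)=2T(\gamma+1)+1$: the move into columns $\gamma+1,\gamma+2$ with both outer $\delta$'s at their column maxima is arithmetically impossible (this is the ``maximum bound problem'' of Lemma \ref{Lemma:MinMaxBound}). The repair is to borrow one step below the top, $\delta''=T(\gamma+2)-1$ with $\delta_1=\delta_2=T(\gamma+1)$, which requires column $\gamma+2$ to contain at least two points. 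That requirement holds at every non-excluded even-$\gamma$ top except $\gamma=2k'-2$ when $\rho=0$, where column $2k'$ is the singleton $\{0\}$ --- precisely the index $(2,2k'-2,3)$, which is why it has no $2$-binomial and needs your cubic; and at $\gamma=2k'-2$, $\rho=1$, the repaired move outputs exactly the paper's binomial $w_{(2,2k'-2,4)}w_{(2,2k',0)}-w_{(2,2k'-1,2)}^2$. (For odd $\gamma$ the count is harmless provided column $\gamma+1$ has two points, which fails only at the excluded $(2,2k'-1,1)$, $\rho=0$.) So your plan does close, but this parity analysis is the real content of the ``transition zone'' you defer --- it is not mere floor/ceiling bookkeeping, since it contains the only configurations where your stated construction fails and it is what produces the single cubic exception. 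Once written out, your proof is complete and, in my view, cleaner than the paper's.
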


\begin{proof}
For any $(2,\gamma,\delta)\in \W $ different from the excluded cases we consider the monomial $m = w_{(2,\gamma,\delta)}w_{(2, 2k'+\lfloor \frac{\rho}{2} \rfloor, \lceil \frac{\rho}{2} \rceil - \lfloor \frac{\rho}{2} \rfloor)}$. For convenience we note
$\gamma'= 2k'+\lfloor \frac{\rho}{2} \rfloor$ and $\delta' = \lceil \frac{\rho}{2} \rceil - \lfloor \frac{\rho}{2} \rfloor $. Set $\gamma_{1}:=
\gamma+1$ and $\gamma_{2}:= \gamma'-1$. Unless $\gamma$ and $\gamma'$ are even, and $\delta =(2d-3\gamma)/2$ (hence $\rho \ne 2$), there exists
$\delta _i$ with $\max\{0,d-2\gamma_{i}\} \leq \delta_{i}
\leq \lfloor \frac{2d-3\gamma_{i}}{2} \rfloor$ such that
$\delta_{1} + \delta_{2} = \delta + \delta'$.

If $\gamma$ and $\gamma'$ are even, $\delta =(2d-3\gamma)/2$ and $\gamma < 2k'-2$ we take $\gamma_{1}:= \gamma+2$ and $\gamma_{2} := 2k'-2$. Then, there exists
$\delta _i$ with $\max\{0,d-2\gamma_{i}\} \leq \delta_{i}
\leq \lfloor \frac{2d-3\gamma_{i}}{2} \rfloor$ such that
$\delta_{1} + \delta_{2} = \delta + \delta'$.

If $\gamma =2k'-2$ and $\rho =1$, $w_{(2,2k'-2,4)}w_{(2,2k',0)} - w_{(2,2k'-1,2)}^{2}$ is a special $2$-binomial when $\rho = 1$.
Finally, if $\rho = 0$, $\gamma = 2k'-2$ and $\delta =3$, the element $(2,2k'-2,3)$ does not admit a special $2$-binomial but it admits a special 3-binomial:
 $w_{(2,2k'-2,3)}w_{(2,2k'-1,0)}w_{(2,2k',0)}-w_{(2,2k'-1,1)}^{3}$.
\end{proof}

From now on we set:
\begin{itemize}
 \item  $\W' = \W - \{(2,2k'-1,0),(2,2k'-1,1),(2,2k',0),(3,d,0)\}$ if $\rho = 0$,
 \item  $\W' = \W - \{(2,2k'-1,2),(2,2k',0),(2,2k',1),(3,d,0)\}$ $\rho = 1$, and
 \item  $\W' = \W - \{(2,2k',0), (2,2k',1),(2,2k',2),(3,d,0)\}$ $\rho = 2$.
\end{itemize}

Up to now we have seen that for any $(r,\gamma,\delta) \in \W'$ the variable $w_ {(r,\gamma,\delta)}$ admits a special 2-binomial or 3-binomial.

For each $(r,\gamma,\delta) \in \W'$ set $D_{(r,\gamma,\delta)}$ to be one of its special binomials and note $\alpha_{(r,\gamma,\delta)}$ its lattice point. We call $\{D_{(r,\gamma,\delta)}\}_{(r,\gamma,\delta) \in \W'}$ a {\em system of special binomials} and $\{\alpha_{(r,\gamma,\delta)}\}_{(r,\gamma,\delta) \in \W'}$ its associated {\em system of lattice points.} The matrix associated to any system of lattice points is upper triangular. So we have the following result:

\begin{cor}\label{Col:Basis}  For any system of special binomials $\{D_{(r,\gamma,\delta)}\}_{(r,\gamma,\delta) \in \W'}$ its associated system of lattice points $\{\alpha_{(r,\gamma,\delta)}\}_{(r,\gamma,\delta) \in \W'}$ is a $\ZZ$-basis of $\ZZ^{\mu(T_{d})-4}$.
\end{cor}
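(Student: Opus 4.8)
The plan is to use the one structural fact recorded just before the statement — that in a system of special binomials each lattice point $\alpha_{(r,\gamma,\delta)}$ has lex-minimal support exactly at its index $(r,\gamma,\delta)$ — to exhibit a unimodular square block in the matrix of the $\alpha_{(r,\gamma,\delta)}$, and then to upgrade the resulting $\ZZ$-linear independence to the assertion that they actually span all of $L_{\eta}$, via a saturation argument.

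First I would assemble the two numerical inputs. We deleted precisely four triples from $\W$, so $\#\W' = \mu(T_{d})-4$, which matches $\operatorname{rk}(L_{\eta})=\mu(T_{d})-4$ from Theorem \ref{Thm:MainTheorem}(1). Moreover each $\alpha_{(r,\gamma,\delta)}$ lies in $L_{\eta}$, since a special binomial is a non-trivial suitable binomial and hence belongs to $I_{+}(\eta)$ by Theorem \ref{Thm:MainTheorem}(2). Thus we have exactly the right number of vectors in the right lattice. Order $\W$ lexicographically and let $M$ be the $(\mu(T_{d})-4)\times\mu(T_{d})$ integer matrix whose rows are the $\alpha_{(r,\gamma,\delta)}$, $(r,\gamma,\delta)\in\W'$, columns indexed by $\W$. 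By the defining property of a special binomial, the support of $D_{(r,\gamma,\delta)}$ has lex-minimum $(r,\gamma,\delta)$; hence in the row indexed by $(r,\gamma,\delta)$ every entry in a column strictly below $(r,\gamma,\delta)$ vanishes, while the entry in column $(r,\gamma,\delta)$ is the nonzero leading coefficient. Inspecting the binomials produced in Lemma \ref{Lemma:13=22} and the two following propositions, the distinguished variable $w_{(r,\gamma,\delta)}$ always occurs to the first power, so this leading entry equals $\pm1$. Consequently, restricting $M$ to the columns indexed by $\W'$ gives a square submatrix that, with rows and columns in lex order, is upper triangular with diagonal entries $\pm1$; this is the claimed upper-triangular shape, and it already yields $\ZZ$-linear independence of the $\alpha_{(r,\gamma,\delta)}$.

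The step I expect to require the most care is that $\ZZ$-independence of $\mu(T_{d})-4$ vectors inside the rank-$(\mu(T_{d})-4)$ lattice $L_{\eta}$ shows only that they generate a \emph{finite-index} sublattice $\Lambda\subseteq L_{\eta}$, not that $\Lambda=L_{\eta}$. Here the unit pivots do the real work: performing integer row operations (legitimate precisely because the pivots are $\pm1$) brings the generators of $\Lambda$ into the form $[\,I_{\W'}\mid B\,]$, with $B$ an integer matrix supported on the four deleted columns, without changing the $\ZZ$-row span. A direct check then gives the internal direct sum $\ZZ^{\mu(T_{d})}=\Lambda\oplus\langle e_{c}:c\text{ deleted}\rangle$, so $\Lambda$ is a direct summand of $\ZZ^{\mu(T_{d})}$, hence saturated; equivalently $\ZZ^{\mu(T_{d})}/\Lambda$ is torsion free. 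Since $L_{\eta}/\Lambda$ is a finite (equal-rank) subgroup of the torsion-free group $\ZZ^{\mu(T_{d})}/\Lambda$, it is trivial, and therefore $\Lambda=L_{\eta}$. This identifies $\{\alpha_{(r,\gamma,\delta)}\}_{(r,\gamma,\delta)\in\W'}$ as a $\ZZ$-basis of $L_{\eta}\cong\ZZ^{\mu(T_{d})-4}$, as asserted. The only genuine verification along the way is the uniformity of the pivots being $\pm1$ across all of the special binomials listed in the preceding lemma and propositions; once that is confirmed, the triangular shape and the saturation argument are formal.
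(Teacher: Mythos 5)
The portion of your argument that addresses what Corollary \ref{Col:Basis} actually asserts is correct, and it coincides with the paper's own one-line proof: since the support of a special binomial has its lex-minimum at its index, the matrix whose rows are the $\alpha_{(r,\gamma,\delta)}$ is upper triangular with nonzero leading entries, so the $\mu(T_{d})-4$ lattice points are $\ZZ$-linearly independent and therefore form a $\ZZ$-basis of the free abelian group they generate, which is abstractly $\ZZ^{\mu(T_{d})-4}$. Be aware, however, that in the paper this is \emph{all} the corollary claims: the stronger assertion that a system of lattice points is a $\ZZ$-basis of the lattice $L_{\eta}$ is stated and proved separately, immediately after the corollary, by a reduction argument on suitable $2$-binomials. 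The ``upgrade'' that you treat as the heart of the proof is therefore not part of this statement.

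That upgrade, as you carry it out, contains a genuine gap: the claim that the pivots are always $\pm 1$. By definition, a special binomial for $(r,\gamma,\delta)$ is \emph{any} non-trivial suitable $n$-binomial, for any $n\geq 2$, whose support has lex-minimum at $(r,\gamma,\delta)$; the corollary quantifies over any system of such binomials, not just over the particular ones constructed in Lemma \ref{Lemma:13=22} and the two propositions following it, so no inspection of that list can close the gap. In fact the claim is false: if $D=m-m'$ is a special binomial with lattice point $\alpha$, then $m^{2}-(m')^{2}$ is again a non-trivial suitable binomial with the same support and the same lex-minimum, hence again a special binomial for the same index, with lattice point $2\alpha$. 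Concretely, keep any standard choices of $D_{(r,\gamma,\delta)}$ for all indices except $(0,0,0)$, and for $(0,0,0)$ take the square of a special $2$-binomial (in such a $2$-binomial the exponent of $w_{(0,0,0)}$ must be $1$, since $w_{(0,0,0)}^{2}=w_{c}w_{d}$ would force $c=d=(0,0,0)$). This is a legitimate system whose matrix has diagonal entry $2$ in the row of $(0,0,0)$; every vector of its row span $\Lambda$ has even coordinate in column $(0,0,0)$, because all other rows vanish there, while $\alpha_{(0,0,0)}$ lies in $L_{\eta}$ (it is the lattice point of a generator of $I_{d}$) and has coordinate $1$ there. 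Hence $\Lambda$ is not saturated and $\Lambda\neq L_{\eta}$: the statement you set out to prove fails for this system, while the corollary itself (linear independence) survives. Your saturation argument is valid conditionally on unit pivots, and it is indeed a clean way to see that the specific systems constructed in the paper span a saturated sublattice; but identifying the span with $L_{\eta}$ for the systems actually covered by the corollary requires the paper's subsequent Markov-basis-style reduction, in which the lattice point of every suitable $2$-binomial (these generate $L_{\eta}$) is expressed in terms of the fixed system, and your proposal does not attempt that step.
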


\begin{ex}\label{Ex:System4} \rm For $d=4$ we can chose as a system of special binomials
$$\begin{array}{l}

D_{(0,0,0)}:= w_{(0,0,0)}w_{(2,2,0)} -  w_{(1,1,0)}^{2}\\
D_{(1,0,0)}:= w_{(1,0,0)}w_{(3,4,0)} -  w_{(2,2,0)}^{2}\\
D_{(1,0,1)}:= w_{(1,0,1)}w_{(3,4,0)} -  w_{(2,2,0)}w_{(2,2,1)}\\
D_{(1,0,2)}:= w_{(1,0,2)}w_{(3,4,0)} -  w_{(2,2,1)}^{2}\\
D_{(1,1,0)}:= w_{(1,1,0)}w_{(2,1,2)}w_{(3,4,0)} - w_{(2,2,0)}w_{(2,2,1)}^{2}\\
D_{(2,0,4)}:= w_{(2,0,4)}w_{(2,2,0)} -  w_{(2,1,2)}^{2}.
\end{array}$$

The matrix associated to its system of  lattice points is
$$\begin{pmatrix}
1 & 0 & 0 & 0 &-2 & 0 & 0 & 1  & 0  & 0 \\
0 & 1 & 0 & 0 & 0 & 0 & 0 & -2 & 0  & 1 \\
0 & 0 & 1 & 0 & 0 & 0 & 0 & -1 & -1 & 1 \\
0 & 0 & 0 & 1 & 0 & 0 & 0 &  0 & -2 & 1 \\
0 & 0 & 0 & 0 & 1 & 0 & 1 & -1 & -2 & 1\\
0 & 0 & 0 & 0 & 0 & 1 &-2 & 1  & 0  & 0
\end{pmatrix}$$
So, $\{\alpha_{(r,\gamma,\delta)}\}_{(r,\gamma,\delta) \in \mathcal{W}_{4}'}$ is $\ZZ$-basis of $\ZZ^6$.
\end{ex}

Next  we prove that any system of lattice points is a $\ZZ$-basis of
the lattice $L_{\eta}$. In the sequel we fix $\{D_{(r,\gamma,\delta)}\}_{(r,\gamma,\delta)\in \W'}$ and its associated system of lattice points $\{\alpha_{(r,\gamma,\delta)}\}_{(r,\gamma,\delta) \in \W'}$. Rephrasing, we want to demonstrate that $L_{\rho} = \langle \alpha_{(r,\gamma,\delta)} \rangle_{(r,\gamma,\delta)\in \W'}$.
The lattice $L_{\eta}$ is generated by all suitable $2$-binomials. Thus it is enough to express the lattice point of any non-trivial suitable  $2$-binomial as a linear combination of $\{\alpha_{(r,\gamma,\delta)}\}_{(r,\gamma,\delta)\in \W'}$. So we fix
a non-trivial suitable $2$-binomial $$w^{\alpha_{0}} = m-m'=w_{(r_{1},\gamma_{1},\delta_{1})}w_{(r_{2},\gamma_{2},\delta_{2})} - w_{(r_{3},\gamma_{3},\delta_{3})}w_{(r_{4},\gamma_{4},\delta_{4})}$$ with associated lattice point $$\alpha_{0} = \alpha_{0}^{+} - \alpha_{0}^{-} = (r_{1},\gamma_{1},\delta_{1}) + (r_{2},\gamma_{2},\delta_{2}) - (r_{3},\gamma_{3},\delta_{3}) - (r_{4},\gamma_{4},\delta_{4}) \notin \{\alpha_{(r,\gamma,\delta)}\}_{(r,\gamma,\delta)\in \W'}.$$

Set $\alpha_{1} := \alpha_{0} - \sum_{\W',\alpha_{0}}^{+} \alpha_{(r_{i},\gamma_{i},\delta_{i})} + \sum_{\W',\alpha_{0}}^{-} \alpha_{(r_{i},\gamma_{i},\delta_{i})}$, where the summing $\sum_{\W',\alpha_{0}}^{+}$ (respectively, $\sum_{\W',\alpha_{0}}^{-}$) means that we only consider those elements $(r_{i},\gamma_{i},\delta_{i}) \in \W'\cap supp(\alpha_{0}^{+})$ (respectively, $\W' \cap supp(\alpha_{0}^{-})$). Therefore, $\alpha_{1}$ is a point of $L_{\eta}$  and its associated binomial $w^{\alpha_{1}}$ is a suitable $n$-binomial for some $n \geq 2$.
Furthermore, $supp(\alpha_{0}) \cap supp(\alpha_{1}) \cap \W' = \emptyset$ and all elements in $supp(\alpha_{1})$ are strictly bigger than $min\{supp(\alpha_{0})\}$.
If there exists a lattice point $(r,\gamma,\delta) \in \W' \cap supp(\alpha_{1})$, then we apply the same strategy to $\alpha_{1}$ and so on. Before continuing let us see how the procedure works by an example.

\begin{ex}\label{Ex:Reduc4} \em According to Example \ref{Ej:Ideal4} for $d=4$ we have 12 non-trivial suitable  2-binomials. Five of them are part of the system of special binomials that we fix in  Example \ref{Ex:System4}. Let us check that the seven remaining cases can be written as a linear combination of the system of special binomials fixed in Example \ref{Ex:System4}. The first step of the above induction process gives us:
$$\begin{array}{rclcl}
(0,0,0) + (2,0,4) & - &  2(1,0,2) & \to &\alpha_{1} = (2,0,4) + (2,2,0) - 2(2,1,2)\\
(0,0,0) + (2,1,2) & - & (1,0,2) - (1,1,0) & \to & \alpha_{1} = 0\\
(1,0,0) + (1,0,2) & - &  2(1,0,1) & \to &\alpha_{1} = 0\\
(1,0,0) + (2,2,1) & - &  (1,0,1) - (2,2,0) & \to &\alpha_{1} = 0\\
(1,0,1) + (2,2,1) & - &  (1,0,2) - (2,2,0) & \to &\alpha_{1} = 0\\
(1,0,2) + (2,1,2) & - &  (1,1,0) - (2,0,4) & \to &\alpha_{1} = -[(2,0,4) + (2,2,0) - 2(2,1,2)]\\
(1,0,2) + (2,2,0) & - & (1,1,0) - (2,1,2) & \to & \alpha_{1} = 0.\\
\end{array}$$
Since  $D_{(2,0,4)} = w_{(2,0,4)}w_{(2,2,0)} -w^2_{(2,1,2)}$ (see Example \ref{Ex:System4}), next step reduces $\alpha_{1}$ to $0$ in all cases.
\end{ex}

In general, this procedure defines inductively a sequence of lattice points
$\{\alpha_{1}, \hdots, \alpha_{s}, \hdots\}$ $\subseteq  L_{\eta}$, such that at any step $s$ of the induction process $supp(\alpha_{s-1}) \cap supp(\alpha_{s}) \cap \W' = \emptyset$ and $min\{supp(\alpha_{s})\}$ is strictly smaller than any element in the support of $\alpha_{s-1}$. So clearly this process stops, indeed $\W'$ is finite. Once it ends, we obtain a linear combination of $\{\alpha_{(r,\gamma,\delta)}\}_{(r,\gamma,\delta)\in \W'}\cup \{\alpha_{0}\}$, we denote it  $\alpha_{h} \in L_{\eta}$ for some $h \geq 1$. To achieve our goal it suffices to check that $\alpha_{h} = 0$. We note the elements of $\W-\W'$ by $l_{1},l_{2},l_{3}$ and $l_{4}$, ordered in the natural way. It is a matter of fact that $w^{\alpha_{h}}$ is a suitable $n$-binomial and  $supp(\alpha_{h}) \subseteq \W - \W'$. Thus there exist non negative integers $A_{1}, \hdots, A_{4}$ and $B_{1}, \hdots, B_{4}$ such that $\alpha_{h} = \sum_{i=1}^{4} A_{i}l_{i} - \sum_{i=1}^{4}B_{i}l_{i}$.

\begin{lem}  With the above notation, $(3,d,0) \notin supp(\alpha_{h})$.
\end{lem}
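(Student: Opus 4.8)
The plan is to derive the conclusion purely from two linear relations that every element of $L_{\eta}$ automatically satisfies: equality of the total $w$-degree on the two sides, and equality of the total $r$-weight. First I would note that $L_{\eta}$ is generated by the lattice points of suitable $2$-binomials, each of which, viewed in $\ZZ^{\mu(T_{d})}$, is a sum of two standard basis vectors minus two standard basis vectors; hence each has coordinate sum $0$, and therefore so does $\alpha_{h}$. Concretely, writing $\alpha_{h}=\sum_{i=1}^{4}A_{i}l_{i}-\sum_{i=1}^{4}B_{i}l_{i}$, this reads $\sum_{i=1}^{4}A_{i}=\sum_{i=1}^{4}B_{i}$. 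Likewise, since $w^{\alpha_{h}}$ is a suitable binomial, the first-coordinate (the ``$r$'') equality $\sum_{i=1}^{4}A_{i}r(l_{i})=\sum_{i=1}^{4}B_{i}r(l_{i})$ holds, where $r(l_{i})$ denotes the $r$-entry of $l_{i}$.

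The key observation is that $(3,d,0)=l_{4}$ is the unique one of the four elements of $\W-\W'$ whose $r$-entry equals $3$: inspecting the three lists defining $\W-\W'$, one sees that for each $\rho\in\{0,1,2\}$ the elements $l_{1},l_{2},l_{3}$ all have $r$-entry $2$, while $l_{4}=(3,d,0)$ has $r$-entry $3$. Setting $c_{i}:=A_{i}-B_{i}$, the degree relation gives $c_{1}+c_{2}+c_{3}+c_{4}=0$ and the $r$-relation gives $2(c_{1}+c_{2}+c_{3})+3c_{4}=0$. Substituting $c_{1}+c_{2}+c_{3}=-c_{4}$ into the latter yields $c_{4}=0$. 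Since the $(3,d,0)$-coordinate of $\alpha_{h}$ is exactly $A_{4}-B_{4}=c_{4}$, this shows $(3,d,0)\notin supp(\alpha_{h})$.

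The computation itself is immediate, so I expect the only point requiring care to be the justification that the degree relation is both \emph{available} for $\alpha_{h}$ and genuinely \emph{independent} of the three weight relations in $r,\gamma,\delta$. Availability follows because $\alpha_{h}$ is produced from $\alpha_{0}$ and the chosen special $2$- and $3$-binomials by integer linear combinations, and all of these have coordinate sum $0$. Independence is a one-line linear-algebra check: the all-ones vector is not in the span of the three rows recording the $r$-, $\gamma$- and $\delta$-weights of $l_{1},\dots,l_{4}$, so the degree relation adds genuine information rather than following from suitability. I would emphasise that the $\gamma$- and $\delta$-relations play no role in this particular lemma; only the $r$-weight together with the degree is needed, and no finer case analysis on $\rho$ is required beyond the single observation that $l_{4}$ is the unique generator with $r=3$.
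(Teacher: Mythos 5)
Your proposal is correct and follows essentially the same route as the paper: both exploit that $l_1,l_2,l_3$ have $r$-entry $2$ while $l_4=(3,d,0)$ has $r$-entry $3$, and combine the $r$-weight equality with the equal-degree (equal-length) relation to force $A_4=B_4$. The only cosmetic difference is that you justify the degree relation via the coordinate-sum-zero generators of $L_{\eta}$, whereas the paper reads it off from $w^{\alpha_h}$ being a suitable $n$-binomial; these are interchangeable.
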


\begin{proof} Since $w^{\alpha_{h}}$ is a suitable $n$-binomial, it holds that $2(A_{1}+A_{2}+A_{3})+3A_{4} = 2(B_{1}+B_{2}+B_{3})+ 3B_{4}$. In other words, the $r$'s involved in $supp(\alpha_{h}^{+})$ and  $supp(\alpha_{h}^{-})$ form a two full partitions of the same length $A_{1}+A_{2}+A_{3}+A_{4} = B_{1}+B_{2}+B_{3}+B_{4}$ and weight $2(A_{1}+A_{2}+A_{3})+3(A_{4})$. So necessarily $A_{4} = B_{4}$ which proves what we want.
\end{proof}

For sake of completeness we specify $\alpha_{h}$ in each case.
\begin{itemize}
 \item $\alpha_{h} = (A_{1}-B_{1})(2,2k'-1,0)+(A_{2}-B_{2})(2,2k'-1,1)+(A_{3}-B_{3})(2,2k',0)$ when $\rho = 0$;
 \item $\alpha_{h} = (A_{1}-B_{1})(2,2k'-1,2)+(A_{2}-B_{2})(2,2k',0)+(A_{3}-B_{3})(2,2k',1)$ if $\rho = 1$; and
 \item $\alpha_{h} = (A_{1}-B_{1})(2,2k'+1,0)+(A_{2}-B_{2})(2,2k',1),+(A_{3}-B_{3})(2,2k',2)$ for $\rho = 2$.
\end{itemize}

 Since $w^{\alpha_{h}}$ is a suitable $n$-binomial, a straightforward computation shows that $A_{i} = B_{i}$, $i = 1,2,3$.
 $\square $

  %%%%%%%%%%%%%%%%%%%%%%%%%%%%%%%%%%%%%%%%%%%%%

\section{A minimal set of generators for GT-lattice ideals}
\label{minmalsetgenerators}

In the previous section we have stated that $I(X_{d})$ is a lattice ideal and we have given a $\ZZ$-basis of the associated lattice $L_{\eta}$ as well as a
system of generators of $I(X_{d})$ ( Theorem \ref{Thm:MainTheorem} (2)). Precisely, $I(X_{d})$ is generated by all non trivial suitable $n$-binomials with $n \geq 2$. Now we want to determine a minimal set of generators for $I(X_{d})$. More concretely,
we will prove that  the $GT$-lattice ideal $I(X_{d})$
is generated by quadrics if $d$ is even  and by  quadrics and cubics if $d$ is odd (Corollary \ref{MAIN2}).
As in previous sections  $d\geq 4$ and we write $d = 2k+\varepsilon = 3k'+\rho$, with $\varepsilon \in \{0,1\}$ and $\rho\in \{0,1,2\}$.

For each $n \geq 2$ we denote $I_{+}(\eta)_{n}$ the set of all suitable $n$-binomials and
$(I_{+}(\eta)_{n})$ the ideal of $k[w_{(r,\gamma,\delta)}]$ generated by them. Therefore, we have

\begin{equation}\label{igualtat_ideals1}  I(X_{d}) = \sum_{n \geq 2} (I_{+}(\eta)_{n}).
\end{equation}

\begin{defn}\label{Defi:In-sequence} \rm Let $w^{\alpha} = w^{\alpha_{+}}- w^{\alpha_{-}}$ be a non trivial suitable $n$-binomial. By an $I_+(\eta )_{n}$-{\em sequence} from $w^{\alpha_{+}}$ to $w^{\alpha_{-}}$ we mean a finite sequence $\{w^{a_{1}}, \hdots, w^{a_{t}}\}$ of monomials in $k[w_{(r,\gamma , \delta)}]$  satisfying the following two conditions:
\begin{itemize}
\item[(i)] $w^{a_{1}} = w^{\alpha_{+}}, w^{a_{t}} = w^{\alpha_{-}}$ and
\item[(ii)] For all $1\le j<t$,  $w^{a_{j}}-w^{a_{j+1}}$ is  a trivial suitable $n$-binomial.
\end{itemize}
\end{defn}

The second condition in the above definition says that for each $1 \leq j < t$, there exists a variable $w_{(r_{j},\gamma_{j},\delta_{j})} \in supp(w^{a_{j}}) \cap supp(w^{a_{j+1}})$. Thus each $w^{a_{j}} - w^{a_{j+1}}$ belongs to $(I_{+}(\eta)_{n-1})$.

\begin{ex} \rm Any trivial suitable $n$-binomial $w^{\alpha^{+}} - w^{\alpha^{-}}$ gives rise to the $I_{+}(\eta)_{n}$-sequence $\{w^{\alpha_{+}}, w^{\alpha_{-}}\}$.
\end{ex}

\begin{ex} \rm Consider $d = 4$ and $I_{4}$ from Example \ref{Ej:Ideal4}. The lattice ideal $I_{4}$ is generated by all suitable $2$-binomials. Let us give some examples of $I_{+}(\eta)_{3}$-sequence. Set $w^{a_{1}} = w_{(0,0,0)}w_{(1,0,2)}w_{(2,1,2)}$. Since $w_{(1,0,2)}w_{(2,1,2)}-w_{(1,1,0)}w_{(2,0,4)}$ is a suitable $2$-binomial, $\{w^{a_{1}},w^{a_{2}}\}$  with  $w^{a_{2}} := w_{(0,0,0)}w_{(1,1,0)}w_{(2,0,4)}$ is an $I_{+}(\eta)_{3}$-sequence. Now observe that $w_{(0,0,0)}w_{(2,0,4)}
- w_{(1,0,2)}w_{(1,0,2)}$ is also a suitable $2$-binomial.
 Hence $w^{a_{2}}-w^{a_{3}}$  with  $w^{a_{3}} := w_{(1,1,0)} w_{(1,0,2)}w_{(1,0,2)}$ is trivial and so $\{w^{\alpha_{1}},
w^{a_{2}}, w^{a_{3}}\}$ is an $I_{+}(\eta)_{3}$-sequence from $w^{\alpha_{1}}$ to $w^{\alpha_{3}}$.

 As another example of $I_{+}(\eta)_{3}$-sequence we have
$$\{w_{(1,0,0)}w_{(1,0,2)}w_{(2,2,1)}, w_{(1,0,1)}^{2}w_{(2,2,1)}, w_{(1,0,1)}w_{(1,0,2)}w_{(2,2,0)}, w_{(1,0,1)}w_{(1,1,0)}w_{(2,1,2)}\}$$
and the equality
$$\begin{array}{ll}
w_{(1,0,0)}w_{(1,0,2)}w_{(2,2,1)} - w_{(1,0,1)}w_{(1,1,0)}w_{(2,1,2)} = &
w_{(2,2,1)}w_{(1,0,0)}w_{(1,0,2)} - w_{(2,2,1)} w_{(1,0,1)}^{2} \\
& +  w_{(1,0,1)}^{2}w_{(2,2,1)} - w_{(1,0,1)}w_{(1,0,2)}w_{(2,2,0)} \\
& + w_{(1,0,1)}w_{(1,0,2)}w_{(2,2,0)} - w_{(1,0,1)}w_{(1,1,0)}w_{(2,1,2)}
\end{array}$$
shows that the non trivial $3$-binomial $w_{(1,0,0)}w_{(1,0,2)}w_{(2,2,1)} - w_{(1,0,1)}w_{(1,1,0)}w_{(2,1,2)}\in (I_{+}(\eta)_{2})$.  
\end{ex}

This last example illustrates very well what happens in general. Indeed, we have:

\begin{prop}\label{Prop:Criterion}  Fix $n \geq 3$ and let $w^{\alpha} = w^{\alpha_{+}}-w^{\alpha_{-}}$ be a suitable $n$-binomial. Then $w^{\alpha} \in (I_{+}(\eta)_{n-1})$ if and only if there exists an $I_{+}(\eta)_{n}$-sequence from $w^{\alpha_{+}}$ to $w^{\alpha_{-}}$.
\end{prop}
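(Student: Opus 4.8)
The plan is to prove the two implications separately: the reverse implication is an immediate telescoping computation, while the forward implication rests on a reduction to a single graded component followed by an elementary graph-connectivity argument. For the ($\Leftarrow$) direction, suppose an $I_+(\eta)_n$-sequence $\{w^{a_1},\dots,w^{a_t}\}$ from $w^{\alpha_+}$ to $w^{\alpha_-}$ is given. Then I would telescope,
\[
w^{\alpha} = w^{\alpha_+}-w^{\alpha_-} = w^{a_1}-w^{a_t} = \sum_{j=1}^{t-1}\bigl(w^{a_j}-w^{a_{j+1}}\bigr),
\]
and invoke Definition \ref{Defi:In-sequence}(ii): each difference $w^{a_j}-w^{a_{j+1}}$ is a trivial suitable $n$-binomial. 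As observed right after Definition \ref{Defi:In-sequence}, such a binomial shares a common variable $w_{(r_j,\gamma_j,\delta_j)}$ between its two monomials, hence factors as that variable times a suitable $(n-1)$-binomial and lies in $(I_+(\eta)_{n-1})$. Thus $w^\alpha$ is a sum of elements of $(I_+(\eta)_{n-1})$, which settles this direction.

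For the ($\Rightarrow$) direction I would first exploit the natural $\ZZ^3$-multigrading on $k[w_{(r,\gamma,\delta)}]$ in which $w_{(r,\gamma,\delta)}$ carries multidegree $(r,\gamma,\delta)$; a suitable $m$-binomial is precisely a multihomogeneous binomial of total degree $m$, and both $w^\alpha$ and the ideal $(I_+(\eta)_{n-1})$ are multihomogeneous. The key observation is that the graded piece of $(I_+(\eta)_{n-1})$ of total degree $n$ and multidegree $(R,G,D):=(\sum_i r_i,\sum_i\gamma_i,\sum_i\delta_i)$ equals the $k$-span of the trivial suitable $n$-binomials of that multidegree. Indeed, each such binomial factors as (variable)$\cdot$(suitable $(n-1)$-binomial) and so lies in the ideal, while conversely the degree-$n$ part of an ideal generated in degree $n-1$ is spanned by products of a single variable $w_{(r,\gamma,\delta)}$ with a suitable $(n-1)$-binomial, every such product being a trivial suitable $n$-binomial. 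Since $w^\alpha$ is multihomogeneous of multidegree $(R,G,D)$, the hypothesis $w^\alpha\in(I_+(\eta)_{n-1})$ says exactly that $w^{\alpha_+}-w^{\alpha_-}$ is a $k$-linear combination of trivial suitable $n$-binomials of multidegree $(R,G,D)$.

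To finish I would translate this into graph connectivity. Let $\Gamma$ be the graph whose vertices are the degree-$n$ monomials of multidegree $(R,G,D)$ in $k[w_{(r,\gamma,\delta)}]$, with an edge joining $m$ and $m'$ exactly when $m-m'$ is a nonzero trivial suitable $n$-binomial. In the vector space with basis indexed by these monomials, the span of the edge-differences $m-m'$ coincides with $\ker\phi$, where $\phi$ sends each monomial to the indicator vector of its connected component; hence a difference $m-m'$ of two vertices lies in this span if and only if $m$ and $m'$ belong to the same component of $\Gamma$. Applying this to $m=w^{\alpha_+}$ and $m'=w^{\alpha_-}$ shows that these two monomials are connected in $\Gamma$, and a path between them is by construction an $I_+(\eta)_n$-sequence from $w^{\alpha_+}$ to $w^{\alpha_-}$.

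The main obstacle is the forward direction, and within it the delicate point is the identification of the total-degree-$n$ component of $(I_+(\eta)_{n-1})$ with the span of trivial suitable $n$-binomials: it is the multigrading that makes this clean, by letting me discard from any ideal-membership expression all terms that do not carry the multidegree $(R,G,D)$, thereby reducing an arbitrary representation to one involving only edges of $\Gamma$. The concluding connectivity statement is the standard linear-algebra fact underlying Markov-basis arguments (cf.\ \cite{DST}), so once the graded component is correctly described the remainder is routine.
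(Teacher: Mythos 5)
Your proof is correct, and while your $(\Leftarrow)$ direction (telescoping plus the observation following Definition \ref{Defi:In-sequence}) is exactly the paper's, your $(\Rightarrow)$ direction takes a genuinely different route. The paper argues by direct term rewriting: from a representation $w^{\alpha_{+}} = l_{1}q_{1} + \cdots + l_{N}q_{N} + w^{\alpha_{-}}$, where $q_{1},\hdots,q_{N}$ are the suitable $(n-1)$-binomials and the $l_{j}$ are linear forms, it expands everything into monomial terms and observes that $w^{\alpha_{+}}$ must coincide with one of the terms $w_{(r_{0},\gamma_{0},\delta_{0})}q_{j_{0}}^{u^{j_{0}}_{\pm}}$; setting $w^{a_{2}} := w_{(r_{0},\gamma_{0},\delta_{0})}q_{j_{0}}^{u^{j_{0}}_{\mp}}$ produces the next monomial of the sequence together with a representation of $w^{a_{2}}$ having strictly fewer summands, and iterating this rewriting terminates at $w^{\alpha_{-}}$. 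You instead make two structural moves: the identification of the total-degree-$n$, multidegree-$(R,G,D)$ component of $(I_{+}(\eta)_{n-1})$ with the $k$-span of the trivial suitable $n$-binomials of that multidegree, and then the standard connectivity lemma (the span of edge differences equals the kernel of the connected-component map) applied to the fiber graph $\Gamma$ --- the classical Markov-basis argument of \cite{DST}. Your version buys rigor precisely at the paper's most delicate point: the paper's assertion that some coefficient $a^{j_{0}}_{(r_{0},\gamma_{0},\delta_{0})}$ equals exactly $\pm 1$ and that $w^{\alpha_{+}}$ equals exactly one expanded term is really a monomial-cancellation statement, which your multigraded decomposition justifies cleanly (terms of the wrong multidegree are simply discarded, and cancellation is confined to a single fiber); moreover termination is automatic for you, whereas the paper must argue separately that the number of summands decreases at each step. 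What the paper's argument buys is that it is self-contained and explicitly constructive --- it manufactures the $I_{+}(\eta)_{n}$-sequence step by step directly from the ideal-membership certificate --- which matches the hands-on style of the rest of Section 5; your argument is constructive too, but only after unwinding the linear algebra on each connected component. At bottom both proofs rest on the same fiber-connectivity principle, yours being its textbook form.
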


\begin{proof} Suppose that $w^{\alpha} \in (I_{+}(\eta)_{n-1})$. We note $I_{+}(\eta)_{n-1}$ $:=\{q_{1},\hdots, q_{N}\}$ with $N$ the number of all suitable $(n-1)$-binomials and $q_{j} = q_{j}^{u^{j}_{+}} - q_{j}^{u^{j}_{-}}$. By hypothesis there exist homogeneous linear forms $l_{1},\hdots, l_{N}$ such that $w^{\alpha_+} = l_{1}q_{1} + \cdots + l_{N}q_{N}+w^{\alpha_{-}}$. Now we write $l_{j} = a_{(0,0,0)}^{j}w_{(0,0,0)} + \cdots + a_{(3,d,0)}^{j}w_{(3,d,0)}$, where $a_{(r,\gamma,\delta)}^{j} \in k$ for all $(r,\gamma,\delta) \in \W$ and $j = 1,\hdots, N$. Therefore $w^{\alpha_{+}} = \sum_{j}^N\sum_{(r,\gamma,\delta) \in \W} (a_{(r,\gamma,\delta)}^{j}w_{(r,\gamma,\delta)}q_{j}^{u^{j}_{+}}
-  a_{(r,\gamma,\delta)}^{j}w_{(r,\gamma,\delta)}q_{j}^{u^{j}_{-}}) + w^{\alpha_{-}}
$. Hence, there exists $j_0$ such that $a^{j_0}_{(r_0,\gamma _0, \delta _0)}=1$ and
 $w^{\alpha_{+}} = w_{(r_0,\gamma _0,\delta _0)}q_{j_0}^{u^{j_0}_{+}}$
 or $a^{j_0}_{(r_0,\gamma _0, \delta _0)}=-1$ and
 $w^{\alpha_{+}} = w_{(r_0,\gamma _0,\delta _0)}q_{j_0}^{u^{j_0}_{-}}$ . Assume $a^{j_0}_{(r_0,\gamma_0, \delta_0)}=1$  (analogously we deal with the case $a^{j_0}_{(r_0,\gamma_0, \delta_0)}=-1$). Set
  $w^{a_{2}} = w_{(r_0,\gamma _0,\delta _0)}q_{j_0}^{u_{-}^{j_0}}$. We have
  $$ w^{\alpha_{+}} =w^{\alpha_{+}} - w^{a_{2}}+\sum _{(j,(r,\gamma, \delta))\ne (j_0,(r_0,\gamma _0, \delta _0))}(a_{(r,\gamma,\delta)}^{j}w_{(r,\gamma,\delta)}q_{j}^{u^{j}_{+}}
-  a_{(r,\gamma,\delta)}^{j}w_{(r,\gamma,\delta)}q_{j}^{u^{j}_{-}}) + w^{\alpha_{-}}.$$
  Thus
  $$w^{a_{2}}=\sum _{(j,(r,\gamma, \delta))\ne (j_0,(r_0,\gamma _0, \delta _0))}(a_{(r,\gamma,\delta)}^{j}w_{(r,\gamma,\delta)}q_{j}^{u^{j}_{+}}
-  a_{(r,\gamma,\delta)}^{j}w_{(r,\gamma,\delta)}q_{j}^{u^{j}_{-}}) + w^{\alpha_{-}}.$$
We iterate the process, first with $w^{a_{2}}$,  we construct the $I_{+}(\eta)_{n}$-sequence;   and taking into account that the number of summands decreases at each step we can assure that we end with what
we are looking for. We only have to note that the described process stops, since at each step we reduce the number of members of the linear combination, which is finite.
Therefore $w^{a_{t}}=w^{\alpha_{-}}$ for some $t>2$.
\end{proof}

Let $m$ be the smallest integer $m \geq 2$ such that
any suitable $(m+1)$-binomial of $I_{+}(\eta)_{m+1}$ admits a $I_{+}(\eta)_{(m+1)}$-sequence. By (\ref{igualtat_ideals}) and Proposition \ref{Prop:Criterion} we have
\begin{equation} \label{igualtat_ideals2}
I(X_d)=I_{+}(\eta) =\sum_{n \geq 2} (I_{+}(\eta)_{n})= \sum_{i=2}^{m} (I_{+}(\eta)_{i})
\end{equation}

\begin{notation} \rm For any odd integer $d\ge 5$, we define

\noindent $\mathcal{M}_{3}^{0} := \{w_{(0,0,0)}w_{(2,0,d)}w_{(1,0,\delta)} \}_{\delta = 0}^{k-1}\; \cup \, \{ w_{(1,0,0)}w_{(2,\gamma,d-2\gamma)}w_{(3,d,0)}\} _{\gamma =0}^{k-1} $ and

\noindent $\mathcal{M}_{3}^1= \mathcal{M}_{3}^2:= \{w_{(0,0,0)}w_{(2,0,d)}w_{(1,0,\delta)} \}_{\delta = 0}^{k-1}\; \cup \, \{ w_{(1,0,0)}w_{(2,\gamma,d-2\gamma)}w_{(3,d,0)}\} _{\gamma =0}^{k-1} \; \cup
\{w_{(0,0,0)}w_{(2,0,d)}w_{(3,d,0)}, $ $w_{(0,0,0)}w_{(1,0,0)}w_{(3,d,0)}\}.$
\end{notation}
Now we state our main result.

\begin{thm}\label{Teorema:BigTheorem}
\begin{itemize}
\item [(i)] If $d$ is even, for any $n \geq 3$ and any suitable $n$-binomial $w^{\alpha} = w^{\alpha_{+}}-w^{\alpha_{-}}$ there exists a $I_{+}(\eta)_{n}$-sequence from $w^{\alpha_{+}}$ to
$w^{\alpha_{-}}$.
\item [(ii)] If $d$ is odd, for any $n \geq 4$ and any suitable $n$-binomial $w^{\alpha} = w^{\alpha_{+}}-w^{\alpha_{-}}$ there exists a $I_{+}(\eta)_{n}$-sequence from $w^{\alpha_{+}}$ to
$w^{\alpha_{-}}$.
\item[(iii)] If $d$ is odd and  $n = 3$ then a suitable $3$-binomial $w^{\alpha} = w^{\alpha_{+}}-w^{\alpha_{-}}$ admits a $I_{+}(\eta)_{3}$-sequence from $w^{\alpha_{+}}$ to
$w^{\alpha_{-}}$ if and only if neither $w^{\alpha_{+}}$ nor $w^{\alpha_{-}}$  belong to $\mathcal{M}_{3}^{\rho} $.
\end{itemize}
\end{thm}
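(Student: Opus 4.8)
The plan is to use Proposition~\ref{Prop:Criterion} to convert the three statements into a single connectivity problem and then to run an exchange argument by induction on $n$. Fix a suitable $n$-binomial $w^{\alpha}=w^{\alpha_+}-w^{\alpha_-}$ of type $T=(\sum r_i,\sum\gamma_i,\sum\delta_i)$. By the observation following Definition~\ref{Defi:In-sequence}, two monomials of type $T$ are the endpoints of a \emph{trivial} suitable $n$-binomial exactly when they share a variable, so an $I_{+}(\eta)_{n}$-sequence from $w^{\alpha_+}$ to $w^{\alpha_-}$ is precisely a walk, in the graph $\Gamma^n_T$ whose vertices are the degree-$n$ monomials of type $T$ and whose edges join monomials with a common factor, from $w^{\alpha_+}$ to $w^{\alpha_-}$. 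Items (i), (ii), (iii) thus become statements about the connected components of $\Gamma^n_T$. If $w^{\alpha_+}$ and $w^{\alpha_-}$ already share a factor, $w^{\alpha}$ is trivial and the two-term sequence works, so I assume $w^{\alpha}$ non-trivial. The basic move is an \emph{exchange}: pick a factor $w_b\mid w^{\alpha_-}$ and a pair $w_{a_i}w_{a_j}\mid w^{\alpha_+}$ with $a_i+a_j-b\in\W$, and replace $w_{a_i}w_{a_j}$ by $w_bw_{a_i+a_j-b}$. Since $n-2\ge 1$ factors are untouched this is an edge of $\Gamma^n_T$, and the new monomial shares $w_b$ with $w^{\alpha_-}$; cancelling $w_b$ leaves a degree-$(n-1)$ problem to which induction applies.

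Everything then reduces to deciding when an exchange is available, i.e.\ when a pair $w_aw_b$ is \emph{rigid} (its type $a+b$ has a unique unordered realization in $\W$). Reading off $\W$, the only rigidity that matters occurs for odd $d$: the pair $x^dz^d=w_{(0,0,0)}w_{(2,0,d)}$, of type $(2,0,d)$, cannot be rewritten, for its only competitor $w_{(1,0,\delta_1)}w_{(1,0,\delta_2)}$ would need $\delta_1+\delta_2=d$ with each $\delta_i\le\lfloor d/2\rfloor=k$, impossible when $d=2k+1$; dually the pair $y^dt^d=w_{(1,0,0)}w_{(3,d,0)}$, of type $(4,d,0)$, cannot be split into two factors with $r=2$ and $\delta_1=\delta_2=0$. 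For even $d$ the splittings $\delta_1=\delta_2=k$, respectively $\gamma_1=\gamma_2=k$, exist, so these pairs are flexible. I would record the involution $r\mapsto 3-r$ coming from $(x,y,z,t)\mapsto(t,z,y,x)$, which fixes $\W$, interchanges $x^d\leftrightarrow t^d$ and $y^d\leftrightarrow z^d$, and swaps the two rigid pairs; it halves every subsequent verification.

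For even $d$ the corner obstructions disappear, and one checks that every suitable $n$-binomial with $n\ge 3$ admits an exchange; the induction then descends to $n=2$, proving (i) and hence $I(X_d)=(I_{+}(\eta)_{2})$. For odd $d$ I push the descent down to $n=3$ and classify cubics directly: a cubic is isolated in $\Gamma^3_T$ iff all three of its pairs are rigid, and enumerating these from $\W$ produces exactly $\mathcal{M}_3^{\rho}$ --- the one-parameter families $x^dz^dw_{(1,0,\delta)}$ and $y^dt^dw_{(2,\gamma,d-2\gamma)}$ with $0\le\delta,\gamma\le k-1$ (the endpoint values $\delta=k$, $\gamma=k$ being omitted because there the monomial is the unique one of its type, so no non-trivial binomial involves it), together with $x^dz^dt^d$ and $x^dy^dt^d$ when $\rho\ne 0$; for $\rho=0$ the common pair $w_{(0,0,0)}w_{(3,d,0)}$ of type $(3,d,0)$ splits as $w_{(1,k',0)}w_{(2,2k',0)}$, so those two last monomials are \emph{not} isolated, which is precisely why they drop out of $\mathcal{M}_3^{0}$. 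This settles the ``only if'' direction of (iii). For the ``if'' direction I must additionally prove that all non-isolated cubics of a fixed type lie in one component of $\Gamma^3_T$, which I would do by exhibiting a canonical representative of each type to which any non-isolated cubic is joined by a single exchange.

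It remains to treat $n\ge 4$ for odd $d$, that is item (ii). For $n\ge 5$ the exchange reduces to degree $n-1\ge 4$ and the inductive hypothesis finishes the job; the genuine base case is $n=4$, where after one exchange one lands in degree $3$ and must avoid producing a cubic in $\mathcal{M}_3^{\rho}$. Here the point is that a fourth factor always supplies a non-rigid pair, so the exchange can be steered so that the cancelled cubic is not exceptional. I expect this $n=4$ analysis, together with the completeness and $\rho$-dependence of the list $\mathcal{M}_3^{\rho}$ in the cubic classification, to be the main obstacle: both require juggling the constraints $0\le\gamma\le rk'+\lfloor r\rho/3\rfloor$ and $\max\{0,(r-1)d-2\gamma\}\le\delta\le\lfloor(rd-3\gamma)/2\rfloor$ simultaneously, and the parity of $d$ interacts with both bounds at once.
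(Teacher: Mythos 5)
Your plan has a fatal flaw: the move set is too small. You only ever rewrite a \emph{pair} of factors (your ``exchange''), but an edge of your graph $\Gamma^n_T$ merely requires the two monomials to share \emph{one} variable, so for $n\geq 4$ the complementary degree-$(n-1)$ parts may be rearranged wholesale; such edges are not generated by pair exchanges, and for odd $d$ they are genuinely needed. Concretely, take $d=5$, $n=5$ and the suitable $5$-binomial
$$w^{\alpha}\;=\;w_{(0,0,0)}^2w_{(1,0,0)}^2w_{(3,5,0)}\;-\;w_{(1,1,0)}^5 .$$
Every pair type occurring on either side, namely $(0,0,0)$, $(1,0,0)$, $(2,0,0)$, $(3,5,0)$, $(4,5,0)$ and $(2,2,0)$, has a unique realization in $\mathcal{W}_5$: for instance $(4,5,0)$ cannot be $(2,\gamma_1,0)+(2,\gamma_2,0)$ because $\delta_i=0$ forces $\gamma_i=3$, and $(3,5,0)$ cannot be $(1,\gamma_1,0)+(2,\gamma_2,0)$ because $(2,3,0)$ is the only $r=2$ element with $\delta=0$, forcing $\gamma_1=2>k'$. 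So every pair on both sides is rigid, no exchange is available anywhere, and your induction for item (ii) never gets started. Worse, since walks built from pair exchanges are exactly what membership in $I_d=(I_+(\eta)_2)$ means, your argument for (ii), if it could be completed, would prove $w^{\alpha}\in I_5$, which is false. Yet Theorem \ref{Teorema:BigTheorem}(ii) does hold for $w^{\alpha}$: the sequence $\{w_{(0,0,0)}^2w_{(1,0,0)}^2w_{(3,5,0)},\ w_{(0,0,0)}w_{(1,0,0)}w_{(1,1,0)}^2w_{(2,3,0)},\ w_{(1,1,0)}^5\}$ works, but its first step rewrites the \emph{triple} $w_{(0,0,0)}w_{(1,0,0)}w_{(3,5,0)}$ as $w_{(1,1,0)}^2w_{(2,3,0)}$, i.e.\ it uses one of the special $3$-binomials of Proposition \ref{Lemma:Cubics}; this is precisely the kind of move the paper's Proposition \ref{Prop:Reductiontype12} makes and yours cannot. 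This also refutes your claim, for the $n=4$ base case, that ``a fourth factor always supplies a non-rigid pair.''

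The bookkeeping underlying your reduction is also wrong in two ways. First, rigid pairs are far more numerous than the two you list, and they occur for even $d$ as well: $w_{(0,0,0)}w_{(3,d,0)}$ is rigid whenever $\rho\neq 0$ (Remark \ref{Remark:case03}), $w_{(0,0,0)}w_{(2,2k'+\lfloor\rho/2\rfloor,\lceil\rho/2\rceil-\lfloor\rho/2\rfloor)}$ and $w_{(1,k',\lfloor\rho/2\rfloor)}w_{(3,d,0)}$ are rigid whenever $\rho\neq 0$ (Lemmas \ref{Lemma:02=11} and \ref{Lemma:13=22}), $w_{(2,0,d)}^2$ is rigid for even $d$, and there are more for $\varepsilon=1$ (Lemma \ref{Lema:100r2}); the paper's Lemmas \ref{Lemma:13=22}, \ref{Lemma:02=11}, \ref{Lema:100r2}, Remark \ref{Remark:case03} and Lemma \ref{Lemma:MinMaxBound} are exactly the ($\rho$- and parity-dependent) catalogue you would have to reconstruct, so your ``reading off $\W$'' does not survive contact with $\W$. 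Second, availability of your exchange is \emph{not} equivalent to non-rigidity of a pair: the alternative realization must contain a variable dividing the opposite side, which is strictly stronger (your exchange can fail in one direction even when all pairs are flexible). Two further gaps: the ``if'' direction of (iii) and the entire even case are left as ``one checks,'' and that is where the paper spends Propositions \ref{Prop:Reductiontype12}, \ref{Prop:FinalReduction} and the final $\delta$-equalization; and your ``cancel $w_b$ and induct'' step is redundant where it applies (after an exchange towards the target the two monomials already share $w_b$, hence are adjacent by Definition \ref{Defi:In-sequence}), while taken literally its base case fails, since distinct degree-$2$ monomials of the same type never share a variable.
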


\begin{cor}\label{MAIN2} (1) If $d\ge 4$ is even, then  $I_{+}(\eta) = (I_{+}(\eta)_2) =I_{d}$.

(2) If $d\ge 5$ is odd, then $I_{+}(\eta) = (I_{+}(\eta)_2)+(I_{+}(\eta)_3)  =I_{d}+ (w^{\alpha} \in I_{+}(\eta)_{3} \; \mid \;  w^{\alpha_{+}} \in \mathcal{M}_{3}^{\rho } \quad \text{or} \quad w^{\alpha_{-}} \in \mathcal{M}_{3})^{\rho}$.
\end{cor}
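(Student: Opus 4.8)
The plan is to translate the statement into a purely combinatorial connectivity problem and to solve it with a single two-step reduction, isolating the exceptional cubics as genuinely rigid configurations.

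\emph{Reformulation.} First I would identify each variable $w_{(r,\gamma,\delta)}$ with the point $(r,\gamma,\delta)\in\W$, so that a degree-$n$ monomial is a multiset $P$ of $n$ points of $\W$ and a suitable $n$-binomial is a pair $(P,Q)$ of such multisets with the same \emph{total} $T=(R,\Gamma,\Delta):=(\sum r_i,\sum\gamma_i,\sum\delta_i)$. Two monomials are joined by one step of an $I_+(\eta)_n$-sequence exactly when they share a variable; moreover, keeping a shared point $p$ fixed, the remaining $n-1$ points may be replaced by \emph{any} $(n-1)$-multiset of total $T-p$, because every suitable $(n-1)$-binomial lies in $(I_+(\eta)_{n-1})$. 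Thus, by Proposition \ref{Prop:Criterion}, the theorem is equivalent to the connectivity of the graph $G_n(T)$ whose vertices are the degree-$n$ monomials of total $T$ and whose edges join monomials sharing a point, with the stated exception for $d$ odd and $n=3$.

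\emph{The two-step engine.} I would then establish the following reduction: if $P,Q$ are disjoint with common total $T$ and there exist $p\in P$, $q\in Q$ such that $T-p-q$ is the total of some $(n-2)$-multiset $B$ of points of $\W$, then $P$ and $Q$ are connected. Indeed $P\to \{p,q\}\cup B\to Q$ is an $I_+(\eta)_n$-sequence: the first step fixes $p$ and rearranges the other $n-1$ points from $P\setminus\{p\}$ to $\{q\}\cup B$ (both of total $T-p$), and the second fixes $q$ and rearranges $\{p\}\cup B$ to $Q\setminus\{q\}$ (both of total $T-q$). Hence connectivity is reduced to an expressibility question in the sumset $(n-2)\cdot\W$; the extreme points $(0,0,0)$ and $(3,d,0)$, together with the wide admissible ranges of $\delta$ recorded in the description of $\W$, make such decompositions plentiful when $n-2\ge 2$.

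\emph{Parts (i) and (ii).} For $d$ even with $n\ge 3$, and for $d$ odd with $n\ge 4$, I would show that for every pair $(P,Q)$ one can choose $p\in P$ and $q\in Q$ with $T-p-q\in(n-2)\cdot\W$, so the two-step engine always applies and $G_n(T)$ is connected. For $n\ge 4$ the inequality $n-2\ge 2$ gives room to absorb any residue by combining an extreme point with a point of the appropriate $\delta$; for $n=3$ and $d$ even the key is that two points with $r=1,\gamma=0$ may have $\delta$-coordinates as large as $\lfloor d/2\rfloor=d/2$, so their $\delta$-sum reaches $d$, exactly the unit that is unavailable when $d$ is odd. This is carried out by a short case analysis on $\rho$ and on which of the extreme points $(0,0,0),(1,0,0),(2,0,d),(3,d,0)$ occur in $P$ or $Q$.

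\emph{Part (iii) and the obstacle.} For $d$ odd and $n=3$ the quantity $T-p-q$ must itself be a single point of $\W$, and here the engine can fail. I would characterize the failures as the \emph{rigid} (isolated) vertices: a monomial $P$ has no neighbour other than itself if and only if, for each $p\in P$, the total $T-p$ has a unique expression as a sum of two points of $\W$. A direct computation---using obstructions such as $\delta_1+\delta_2\le 2\lfloor d/2\rfloor=d-1<d$ for two $r=1,\gamma=0$ points, and its analogues for $r=2$ points near the top of their ranges and for the splits $r=0{+}3$ versus $r=1{+}2$---shows that the rigid vertices are exactly the monomials listed in $\mathcal{M}_{3}^{\rho}$; this settles the ``only if'' direction, since an isolated vertex admits no sequence to any different monomial. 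For the ``if'' direction I would prove that $G_3(T)$ minus its isolated vertices is connected by routing every non-rigid vertex, through the two-step engine, to a fixed hub monomial of the fibre (all monomials through a common point being mutually adjacent). Combining (i)--(iii) with $(\ref{igualtat_ideals2})$ then yields Corollary \ref{MAIN2}. The main obstacle is exactly this case analysis: verifying the expressibility $T-p-q\in(n-2)\cdot\W$ uniformly across the residues $\rho\in\{0,1,2\}$ and the parity of $d$, and proving that the list $\mathcal{M}_{3}^{\rho}$ is both complete (no rigid vertex is missed) and sharp (every listed monomial really is isolated).
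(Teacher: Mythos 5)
Your reduction of Corollary \ref{MAIN2} to Theorem \ref{Teorema:BigTheorem} via Proposition \ref{Prop:Criterion}, the reformulation as connectivity of the fibre graphs $G_n(T)$, and the two-step engine itself are all sound; the engine is in fact the mechanism behind several of the paper's explicit sequences. The genuine gap is the claim that carries your parts (i) and (ii): that for every disjoint pair $(P,Q)$ with equal total $T$ some choice of $p\in P$, $q\in Q$ has $T-p-q\in(n-2)\cdot\W$, i.e.\ that disjoint vertices of $G_n(T)$ always lie at distance $\le 2$. This is false, already in the case $n=3$, $d$ even that part (1) of the Corollary needs. Take $d=6$ (so $\varepsilon=\rho=0$, $k=3$, $k'=2$) and the non trivial suitable $3$-binomial $w_{(1,0,0)}w_{(2,0,6)}w_{(3,6,0)}-w_{(2,2,2)}^{3}$ (both sides map to $y^{6}z^{6}t^{6}$). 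Here $T=(6,6,6)$ and $q=(2,2,2)$ is forced, so $T-p-q$ is one of $(3,4,4)$, $(2,4,-2)$, $(1,-2,4)$, and none of these lies in $\mathcal{W}_{6}$: the last two have a negative entry, and the only element of $\mathcal{W}_{6}$ with $r=3$ is $(3,6,0)$, because for $r=3$ and $\gamma\le 5$ the interval $\max\{0,2d-2\gamma\}\le\delta\le\lfloor\frac{3d-3\gamma}{2}\rfloor$ is empty. So no monomial is adjacent to both sides, and the engine cannot close this pair. The fibre is nonetheless connected,
$$w_{(1,0,0)}w_{(2,0,6)}w_{(3,6,0)}\;\to\; w_{(2,0,6)}w_{(2,3,0)}^{2}\;\to\; w_{(2,1,4)}w_{(2,2,2)}w_{(2,3,0)}\;\to\; w_{(2,2,2)}^{3},$$
but its diameter is at least $3$; any correct argument must therefore produce sequences of unbounded length, which is exactly why the paper proceeds by a multi-stage reduction (Proposition \ref{Prop:Reductiontype12} eliminating $w_{(0,0,0)}$ and $w_{(3,d,0)}$ from the supports, Proposition \ref{Prop:FinalReduction} equalizing the $\gamma$'s one index at a time through the bound problems of Lemma \ref{Lemma:MinMaxBound}, and a final pass matching the $\delta$'s) rather than by any bounded-diameter claim. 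The same defect undermines your part (iii), where non-rigid vertices are routed to a fixed hub by a single application of the engine.

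A secondary inaccuracy: the isolated vertices are not ``exactly'' the monomials of $\mathcal{M}_{3}^{\rho}$. By Proposition \ref{Lemma:Cubics}(2), the monomials $w_{(0,0,0)}w_{(2,0,d)}w_{(1,0,k)}$, $w_{(1,0,0)}w_{(2,k,1)}w_{(3,d,0)}$ and $w_{(1,0,0)}w_{(2,k+1,0)}w_{(3,d,0)}$ admit no suitable $3$-binomial at all, so they too are isolated (their fibres are singletons) although they do not belong to $\mathcal{M}_{3}^{\rho}$. They are harmless for the Corollary precisely because they occur in no suitable $3$-binomial, but your characterization of the rigid vertices must separate these two phenomena for the ``only if'' direction of part (iii) to read correctly.
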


We devote the rest of this section to prove Theorem \ref{Teorema:BigTheorem} but first  let us illustrate it with a couple of examples.

\begin{ex} \rm Using the software Macaulay2, we check that $I(X_{4}) = T_{4}$ (see Example
\ref{Ej:Ideal4}).
\end{ex}

\begin{ex} \label{Ex:Ideal5} \rm Fix $d = 5$, the binomial ideal $I_{5}$ is generated by twenty suitable $2$-binomials, all lattice points satisfying the equation $(r_{1},\gamma_{1},\delta_{1})+(r_{2}, \gamma_{2},\delta_{2}) = (r_{3},\gamma_{3},\delta_{3}) + (r_{4},\gamma_{4},\delta_{4})$.
$$\begin{array}{lll}
w_{(0,0,0)}w_{(2,1,3)}  -  w_{(1,0,2)}w_{(1,1,1)} & &
w_{(0,0,0)}w_{(2,2,1)}  -  w_{(1,1,0)}w_{(1,1,1)}\\
w_{(0,0,0)}w_{(2,2,2)}  -  w_{(1,1,1)}^{2} & &
w_{(1,0,0)}w_{(1,0,2)}  -  w_{(1,0,1)}^{2}\\
w_{(1,0,0)}w_{(1,1,1)}  -  w_{(1,0,1)}w_{(1,1,0)} & &
w_{(1,0,0)}w_{(2,2,2)}  -  w_{(1,0,1)}w_{(2,2,1)}\\
w_{(1,0,1)}w_{(1,1,1)}  -  w_{(1,0,2)}w_{(1,1,0)}& &
w_{(1,0,1)}w_{(2,2,2)}  -  w_{(1,1,0)}w_{(2,1,3)}\\
w_{(1,0,1)}w_{(2,2,2)}  -  w_{(1,0,2)}w_{(2,2,1)}& &
w_{(1,0,1)}w_{(2,3,0)}  -  w_{(1,1,0)}w_{(2,2,1)}\\
w_{(1,0,1)}w_{(3,5,0)}  -  w_{(2,2,1)}w_{(2,3,0)}& &
w_{(1,0,2)}w_{(2,1,3)}  -  w_{(1,1,0)}w_{(2,0,5)}\\
w_{(1,0,2)}w_{(2,2,2)}  -  w_{(1,1,1)}w_{(2,1,3)}& &
w_{(1,0,2)}w_{(2,3,0)}  -  w_{(1,1,1)}w_{(2,2,1)}\\
w_{(1,0,2)}w_{(2,3,0)}  -  w_{(1,1,0)}w_{(2,2,2)}& &
w_{(1,0,2)}w_{(3,5,0)}  -  w_{(2,2,2)}w_{(2,3,0)}\\
w_{(1,1,0)}w_{(3,5,0)}  -  w_{(2,3,0)}^{2}& &
w_{(2,0,5)}w_{(2,2,1)}  -  w_{(2,1,3)}^{2}\\
w_{(2,0,5)}w_{(2,3,0)}  -  w_{(2,1,3)}w_{(2,2,2)}& &
w_{(2,1,3)}w_{(2,3,0)}  -  w_{(2,2,1)}w_{(2,2,2)}
\end{array}$$
plus  eight non trivial suitable $3$-binomials of $I_{+}(\eta)_{3}$:
$$\begin{array}{lll}
w_{(0,0,0)}w_{(1,0,0)}w_{(2,0,5)}  -  w_{(1,0,1)}w_{(1,0,2)}^{2} & &
w_{(0,0,0)}w_{(1,0,0)}w_{(2,3,0)}  -  w_{(1,1,0)}^{3}\\
w_{(0,0,0)}w_{(1,0,0)}w_{(3,5,0)}  -  w_{(1,1,0)}^{2}w_{(2,3,0)} & &
w_{(0,0,0)}w_{(1,0,1)}w_{(3,5,0)}  -  w_{(1,0,2)}^{3}\\
w_{(0,0,0)}w_{(2,0,5)}w_{(3,5,0)}  -  w_{(1,1,1)}w_{(2,2,2)}^{2} & &
w_{(1,0,0)}w_{(2,0,5)}w_{(3,5,0)}  -  w_{(2,1,3)}w_{(2,2,1)}^{2}\\
w_{(1,0,0)}w_{(2,1,3)}w_{(3,5,0)}  -  w_{(2,2,1)}^{3} & &
w_{(1,1,1)}w_{(2,0,5)}w_{(3,5,0)}  -  w_{(2,2,2)}^{3}.
\end{array}$$

None of these  eight non trivial suitable $3$-binomials admits an $I_{+}(\eta)_{3}$-sequence from $w^{\alpha_{+}}$ to
$w^{\alpha_{-}}$ . For instance,  consider the non trivial suitable $3$-binomial $w^{\alpha } = w^{\alpha_{+}}-w^{\alpha_{-}} = w_{(0,0,0)}w_{(1,0,0)}w_{(2,0,5)}-w_{(1,0,1)}w_{(1,0,2)}^{2}$ of $I(X_{5})$. Assume that $\{w^{a_{1}},\hdots, w^{a_{t}}\}$ is an $I_{+}(\eta)_{3}$-sequence from
$w^{\alpha^{+}}$ to $w^{\alpha^{-}}$. Therefore $w^{\alpha^{+}} - w^{a_{2}}$ is a trivial
suitable $3$-binomial. So there are  $w_{(r,\gamma,\delta)} \in \{w_{(0,0,0)},w_{(1,0,0)},w_{(2,0,5)}\}$ and a non trivial suitable $2$-binomial $w^{\beta} = w^{\beta^{+}}- w^{\beta^{-}}$ such that $w^{\alpha^{+}}-w^{a_{t}} = w_{(r,\gamma,\delta)}w^{\beta}$ with $w^{\beta^{+}}$ or $w^{\beta^{-}}$ being
one of the monomials $w_{(0,0,0)}w_{(2,0,5)}$ or $w_{(1,0,0)}w_{(2,0,5)}$.
However all non trivial suitable $2$-binomials $w^{\xi}$ of $I_{5}$ verifies  $w^{\xi^{+}}, w^{\xi^{-}} \notin \{w_{(0,0,0)}w_{(1,0,0)}, w_{(0,0,0)}w_{(2,0,5)}, $ $w_{(1,0,0)}w_{(2,0,5)}\}$. Thus we conclude that the non trivial suitable $3$-binomial $w_{(0,0,0)}w_{(1,0,0)}w_{(2,0,5)}- w_{(1,0,1)}w_{(1,0,2)}^{2} \notin (I_{+}(\eta))_{2}) = I_{5}$ (see Proposition \ref{Prop:Criterion}).
\end{ex}

Now we develop our main techniques in constructing $I_{+}(\eta)_{n}$-sequences. Let $m = \prod_{i=1}^{n}w_{(r_{i},\gamma_{i},\delta_{i})}$ be a monomial of degree $n \geq 2$ and let  $w_{(r_{i_{j}},\gamma_{i_{j}}, \delta_{i_{j}})}$  be $f$ variables on the support of $m$, where $1 \leq f < n$. If $m_{f} = \prod_{j=1}^{f} w_{(r_{i_{j}},\gamma_{i_{j}}, \delta_{i_{j}})}$ admits a suitable $f$-binomial $m_{f}-m_{f}'$, then $m - m_{f}'\prod_{supp(m) - supp(m_{f}))}w_{(r_{i},\gamma_{i},\delta_{i})}$ is a trivial suitable $n$-binomial.
So determining whether a monomial admits a suitable $f$-binomial gives us a method to construct $I_{+}(\eta)_{n}$-sequence from a given monomial.
Let us start analyzing whether a monomial $w_{(r,\gamma ,\delta )}w_{(r',\gamma ',\delta ')}$ of degree 2 admits a suitable 2-monomial.

\begin{lem}\label{Lemma:02=11}  Any monomial $m = w_{(0,0,0)}w_{(2,\gamma,\delta)} \in k[w_{(r,\gamma,\delta)}]$ admits a special suitable $2$-binomial, with the following  exceptions: $(\gamma,\delta) = (2k'+\lfloor \frac{\rho}{2} \rfloor, \lceil \frac{\rho}{2} \rceil - \lfloor \frac{\rho}{2} \rfloor)$ if $\rho \neq 0$, and $\gamma = 0$ if $\varepsilon = 1$.
\end{lem}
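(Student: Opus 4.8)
The plan is to reduce the claim to a combinatorial splitting problem for the pair $(\gamma,\delta)$ and to solve it with the floor/ceiling arithmetic already exploited in Lemma~\ref{Lemma:13=22}. First I would pin down the shape of $m'$. If $m-m'$ is a non-trivial suitable $2$-binomial, then $m'=w_{(r_3,\gamma_3,\delta_3)}w_{(r_4,\gamma_4,\delta_4)}$ with $r_3+r_4=2$. The only element of $\W$ with $r=0$ is $(0,0,0)$, so the splittings $(r_3,r_4)=(0,2)$ and $(2,0)$ both place $(0,0,0)$ in $\mathrm{supp}(m')$ and make the binomial trivial; hence $r_3=r_4=1$, and the task becomes: find $(1,\gamma_3,\delta_3),(1,\gamma_4,\delta_4)\in\W$ with $\gamma_3+\gamma_4=\gamma$ and $\delta_3+\delta_4=\delta$. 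For $r=1$ the membership constraints are $0\le\gamma_i\le k'$ and $0\le\delta_i\le\lfloor (d-3\gamma_i)/2\rfloor$. I would also note that once such an $m'$ exists the binomial is automatically \emph{special}: $(0,0,0)$ is the lex-minimum of $\W$, it lies in $\mathrm{supp}(m)$, and non-triviality keeps it out of $\mathrm{supp}(m')$, so $(0,0,0)=\min\{\mathrm{supp}(m-m')\}$.

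Next I would split $\gamma$. The natural choice $\gamma_3=\lfloor\gamma/2\rfloor$, $\gamma_4=\lceil\gamma/2\rceil$ satisfies $\gamma_i\le k'$ exactly when $\gamma\le 2k'$. Since $(2,\gamma,\delta)\in\W$ forces $\gamma\le 2k'+\lfloor 2\rho/3\rfloor$, the only value exceeding $2k'$ is $\gamma=2k'+1$, occurring only for $\rho=2$; a direct check shows $(2,2k'+1,\delta)\in\W$ forces $\delta=0$, which is the first listed exception $(2k'+\lfloor\rho/2\rfloor,\lceil\rho/2\rceil-\lfloor\rho/2\rfloor)=(2k'+1,0)$. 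For $\gamma\le 2k'$ the $\delta$-splitting has no positive lower bounds, so the sole obstruction is the upper bound $\lfloor (d-3\gamma_3)/2\rfloor+\lfloor (d-3\gamma_4)/2\rfloor\ge\delta$. Here I would use that $\lfloor x\rfloor+\lfloor y\rfloor=\lfloor x+y\rfloor$ fails only when $2x,2y$ are both odd; with $2x=d-3\gamma_3$ and $2y=d-3\gamma_4$ this gives
\[
\left\lfloor\frac{d-3\gamma_3}{2}\right\rfloor+\left\lfloor\frac{d-3\gamma_4}{2}\right\rfloor=\left\lfloor\frac{2d-3\gamma}{2}\right\rfloor
\]
unless $\gamma_3,\gamma_4$ are both $\equiv d+1\pmod 2$, in which case the left-hand side drops by $1$. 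As the right-hand side is precisely the largest admissible $\delta$ for $(2,\gamma,\delta)$, every valid $\delta$ is attainable as soon as $\gamma$ can be split with $\gamma_3,\gamma_4$ not both $\equiv d+1\pmod 2$.

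The remaining work is therefore a parity analysis of the $\gamma$-split together with the boundary cases. If $\gamma$ is odd, the natural split yields consecutive integers of opposite parity, which is never bad; if $\gamma$ is even with $\gamma/2\equiv d+1\pmod 2$, I would instead take $\gamma_3=\gamma/2-1$, $\gamma_4=\gamma/2+1$, both of the opposite (good) parity, staying inside $[0,k']$ whenever $2\le\gamma\le 2k'-2$. Thus every interior value $0<\gamma<2k'$ admits a good split and hence a suitable $2$-binomial, and only the forced boundary splits can fail. For $\gamma=0$ one has $\delta=d$ forced and the split $0+0$ realises it only when $d\le 2\lfloor d/2\rfloor=2k$, i.e. $\varepsilon=0$, producing the exception $\gamma=0$, $\varepsilon=1$. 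For $\gamma=2k'$ the forced split $k'+k'$ is bad precisely when $k'\equiv d+1\pmod 2$, which happens exactly for $\rho=1$, and there the top value $\delta=1$ is unattainable while $\delta=0$ is, giving the exception $(2k',1)=(2k'+\lfloor\rho/2\rfloor,\lceil\rho/2\rceil-\lfloor\rho/2\rfloor)$. I expect the main obstacle to be exactly this bookkeeping: tracking simultaneously the feasibility of the $\gamma$-split (bounded by $k'$), the $d$-dependent parity condition governing equality in the floor identity, and the forced nature of the split at the two boundaries, so as to certify that the displayed list of exceptions is both complete and sharp.
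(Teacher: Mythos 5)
Your proposal is correct and follows essentially the same route as the paper's proof: reduce to the existence of a splitting $(\gamma,\delta)=(\gamma_3+\gamma_4,\delta_3+\delta_4)$ into two $r=1$ triples of $\W$, use the halving split $\lfloor\gamma/2\rfloor+\lceil\gamma/2\rceil$ corrected by $\pm 1$ when the floor identity $\lfloor x\rfloor+\lfloor y\rfloor=\lfloor x+y\rfloor$ fails for parity reasons, and read off the exceptions from the forced splits at $\gamma=0$, $\gamma=2k'$ and $\gamma=2k'+1$. In fact your write-up is somewhat more complete than the paper's: you justify why sub-maximal values of $\delta$ are automatically realizable (the paper only exhibits $m'$ with maximal $\delta_i$'s), you check that the parity-corrected split stays in $[0,k']$, and you note that non-triviality plus lex-minimality of $(0,0,0)$ makes any such binomial automatically special.
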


\begin{proof}  If $m$ admits a suitable $2$-binomial $m-m'$ necessary $m'=w_{(1,\gamma_{1},\delta_{1})}w_{(1,\gamma_{2},\delta_{2})}$ with $0 \leq \gamma_{i} \leq k'$,
 $0 \leq \delta_{i} \leq \lfloor \frac{d-3\gamma_{i}}{2} \rfloor$ for $i = 1,2$, and  $\gamma_{1} + \gamma_{2} = \gamma$ and $\delta_{1} + \delta_{2} = \delta$. From this follows that $(2,\gamma,\delta)$ cannot be $(2,2k'+1,0)$ in case $\rho = 2$, $(2,2k',1)$ if $\rho = 1$ and $\gamma =0$ if $\varepsilon =1.$

 Otherwise we set $\gamma_{1}:= \lfloor \frac{\gamma}{2} \rfloor$ and $\gamma_{2} :=
\lceil \frac{\gamma}{2} \rceil$.
If $d$ is even and $\gamma _1, \gamma _2$ are odd or $d$ is odd and $\gamma_1, \gamma_2$ are even  we take $m' = w_{(1,\gamma_{1}+1,\lfloor
\frac{d-3(\gamma_{1}+1)}{2} \rfloor)} w_{(1,\gamma_{2}-1,\lfloor \frac{d-3(\gamma_{2}-1)}{2}
\rfloor)}$. In any other case we take $m' = w_{(1,\gamma_{1},\lfloor
\frac{d-3\gamma_{1}}{2} \rfloor)} w_{(1,\gamma_{2},\lfloor \frac{d-3\gamma_{2}}{2}
\rfloor)}$.
\end{proof}

\begin{lem}\label{Lema:100r2}  Suppose $\varepsilon = 1$.
\begin{itemize}
\item[(i)] Any monomial $m = w_{(1,0,0)}w_{(2,\gamma,\delta)}$ admits a suitable $2$-binomial except for $\gamma = 0, \hdots, k+1$ and $\delta = \max\{0,d-2\gamma\}$.
\item[(ii)]  Any monomial $m = w_{(1,\gamma,\delta)}w_{(2,0,d)}$ admits a suitable $2$-binomial except $\gamma = 0$ and $\delta  = 0,\hdots, k$ or  $\gamma = 1$ and $\delta = k-1$.
\end{itemize}
\end{lem}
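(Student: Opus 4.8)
The plan is to reduce both parts to one splitting type and then to exhibit explicit partner monomials in the non-exceptional cases while proving impossibility in the listed ones. In both (i) and (ii) the monomial $m$ has total $r$-weight $3$, so any suitable $2$-binomial $m-m'$ has $m'=w_{(r_1,\gamma_1,\delta_1)}w_{(r_2,\gamma_2,\delta_2)}$ with $\{r_1,r_2\}=\{1,2\}$ or $\{0,3\}$. Since the only $r=0$ and $r=3$ elements of $\W$ are $(0,0,0)$ and $(3,d,0)$, a $\{0,3\}$-splitting would force $m'=w_{(0,0,0)}w_{(3,d,0)}$ and hence $\gamma=d$, impossible because $\gamma<d$ for every element of $\W$ with $r\le 2$. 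Thus only the $\{1,2\}$-splitting survives, and non-triviality amounts to $(\gamma_1,\delta_1)$ being different from the $r=1$ entry of $m$.

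For part (i), where $m=w_{(1,0,0)}w_{(2,\gamma,\delta)}$, I would split according to the size of $\delta$ relative to its minimal value $\max\{0,d-2\gamma\}$. If $\delta>\max\{0,d-2\gamma\}$, the transfer $m'=w_{(1,0,1)}w_{(2,\gamma,\delta-1)}$ works, since $(1,0,1)\in\W$ and $\delta-1$ still meets the lower bound of the $r=2$ slot. If $\delta=\max\{0,d-2\gamma\}$ but $\gamma\ge k+2$ (so $\delta=0$), I would take $m'=w_{(1,1,0)}w_{(2,\gamma-1,0)}$, which lies in $\W$ exactly because $\gamma-1\ge k+1$. The remaining slots $\gamma\le k+1$, $\delta=\max\{0,d-2\gamma\}$ are the claimed exceptions, and there I prove that no non-trivial partner exists: with $\gamma_2=\gamma-\gamma_1$, if $\gamma_1\ge1$ then $\gamma_2\le k$ forces $\delta_2\ge d-2\gamma_2=(d-2\gamma)+2\gamma_1$, whence $\delta_1\le(\delta-(d-2\gamma))-2\gamma_1\le 1-2\gamma_1<0$ (here $\delta-(d-2\gamma)$ is $0$ for $\gamma\le k$ and $1$ for $\gamma=k+1$, using that $d$ is odd); and if $\gamma_1=0$ then $\delta_2<\delta$ violates the lower bound. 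Either way $(\gamma_1,\delta_1)=(0,0)$ and the binomial is trivial.

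For part (ii), where $m=w_{(1,\gamma,\delta)}w_{(2,0,d)}$, the slot $(2,0,d)$ is rigid: keeping $\gamma_2=0$ forces $\delta_2=d$ and hence triviality, so a genuine partner needs $\gamma_2\ge 1$, i.e. $\gamma\ge 1$. This immediately gives the exception $\gamma=0$ for every admissible $\delta\in\{0,\dots,k\}$. For $\gamma=1$ the only non-trivial choice is $\gamma_2=1$, which forces $\delta_2=d-2$ and $\delta_1=\delta+2$; this lies in $\W$ iff $\delta+2\le\lfloor d/2\rfloor=k$, i.e. $\delta\le k-2$, so it succeeds precisely away from the single exception $(\gamma,\delta)=(1,k-1)$. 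For $\gamma\ge 2$ I would transfer two units of $\gamma$ and set $m'=w_{(1,\gamma-2,\delta+3)}w_{(2,2,d-3)}$; the crucial membership $\delta+3\le\lfloor\frac{d-3(\gamma-2)}{2}\rfloor=\lfloor\frac{d-3\gamma}{2}\rfloor+3$ is automatic from $\delta\le\lfloor\frac{d-3\gamma}{2}\rfloor$, so no exception survives for $\gamma\ge 2$.

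The conceptual input is light; the real work is the boundary bookkeeping with the floor functions in the $\delta$-bounds and the verification that each proposed $m'$ meets both the $\gamma$- and the $\delta$-constraints of $\W$ across the parities recorded by $\rho$. I expect the impossibility half of part (i) to be the main obstacle, since it relies on the sharp observation that shifting any positive amount of $\gamma$-weight onto the $r=1$ factor raises the forced lower bound of the $r=2$ factor by exactly twice as much, making the $\delta$-balance unachievable at the minimal level $\delta=\max\{0,d-2\gamma\}$.
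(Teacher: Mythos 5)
Your proof is correct and takes essentially the same approach as the paper: for part (i) you use the identical case split ($\delta>\max\{0,d-2\gamma\}$ handled by the partner $w_{(1,0,1)}w_{(2,\gamma,\delta-1)}$, the case $\delta=\max\{0,d-2\gamma\}$ with $\gamma\ge k+2$ handled by $w_{(1,1,0)}w_{(2,\gamma-1,0)}$, and the same lower-bound contradiction $\delta< d-2(\gamma-\gamma_1)\le \delta_1+\delta_2$ for the exceptional range $0\le\gamma\le k+1$). The paper dismisses part (ii) as ``analogous and left to the reader,'' so your explicit partners there (the rigidity of $(2,0,d)$ forcing $\gamma_2\ge 1$, the unique choice $w_{(1,0,\delta+2)}w_{(2,1,d-2)}$ for $\gamma=1$, and the two-unit transfer $w_{(1,\gamma-2,\delta+3)}w_{(2,2,d-3)}$ for $\gamma\ge 2$), as well as your preliminary exclusion of the $\{0,3\}$-splitting, are valid completions of details the paper omits or implicitly assumes.
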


\begin{proof} (i) We want to determine a monomial $m' = w_{(1,\gamma_{1},\delta_{1})}w_{(2,\gamma_{2},\delta_{2})}$ such that $m-m' \in I_{+}(\eta)_{2}$. If $\delta > max\{0,d-2\gamma\}$, we take
$(1,\gamma_{1},\delta_{1}) = (1,0,1)$ and $(2,\gamma_{2},\delta_{2})
= (2,\gamma,\delta-1)$. Let us to consider the remainder cases $(2,\gamma,max\{0,d-2\gamma\})$ with $\gamma = 0,\hdots,2k'+\lfloor \frac{\rho}{2} \rfloor$.
If $\gamma >k+1 $, $(2,\gamma,max\{0,d-2\gamma\})=(2, \gamma , 0)$ and we take $(1,\gamma_{1},\delta_{1}) = (1,1,0)$ and $(2,\gamma_{2},\delta_{2})
= (2,\gamma -1,0)$. For $0\le \gamma \le k+1$ a monomial $m'$ with
$ \gamma _1+\gamma_2=\gamma $ and $\delta _1+\delta _2=\delta $ does not exist because  we necessarily have $\gamma _1=i$ and $\gamma_2=\gamma - i$ for some $0\le i \le \gamma$, $0\le \delta _1\le \lfloor \frac{d-3i}{2}\rfloor $   and $d-2(\gamma -i)\le \delta _2\le \lfloor \frac{2d-3\gamma +3i}{2}\rfloor$ which give us $\delta <d-2(\gamma -i)\le \delta _1+\delta _2$.

The proof of (ii) is  analogous and we leave it to the reader.
 \end{proof}

\begin{rem} \rm \label{Remark:case03} (1)  The monomial $w_{(0,0,0)}w_{(3,d,0)}$ admits a non trivial suitable $2$-binomial only when $\rho = 0$. Indeed, assume that $w_{(0,0,0)}w_{(3,d,0)}-w_{(1,\gamma_{1},\delta_{1})}w_{(2,\gamma_{2},\delta_{2})}$ is a suitable $2$-binomial. Then we have
$\gamma_{1} + \gamma_{2} = 3k'+\rho = k' + 2k' + \rho$. So $\gamma_{1} = k'$ and $\gamma_{2} = 2k'+\rho = 2k' + \lfloor \frac{\rho}{2} \rfloor$. The last equality is achieved only when $\rho = 0$.

(2) Suppose $\rho = 1$. Any monomial $m = w_{(1,k',0)}w_{(2,\gamma,\delta)}$ admits a suitable $2$-binomial except when $\gamma = 2k'$.
Indeed,  if $\gamma <2k'$ we take $(r_1,\gamma_{1},\delta_{1}) = (1,k'-1,\delta _1)$ and $(r_2,\gamma_{2},\delta_{2})
= (2,\gamma +1,\delta_2)$ with $\delta =\delta_1+\delta_2$, $0\le \delta _1\le \lfloor\frac{d-3k'+3}{2}\rfloor$ and $max\{0,d-2\gamma -2\}\le \delta _2\le \lfloor\frac{2d-3\gamma-3}{2}\rfloor$.
If $\gamma =2k'$, since $\gamma _1<k'$ and $\gamma_2\le 2k'$ we will never have $\gamma=\gamma_1+\gamma_2$.

(3)   Suppose $\rho = 2$.  Clearly $w_{(1,k',1)}w_{(2,2k'+1,0)}$  and $w_{(1,k',1)}w_{(2,2k,2)}$ if $\varepsilon =0$ do not
 admit a suitable $2$-binomial.
 If $d-3\gamma$ is even and $\delta=\frac{2d-3\gamma}{2}$, we take
 $m' = w_{(1,k'-2,\lfloor
\frac{d-3(k'-2)}{2} \rfloor)} w_{(2,\gamma +2,\lfloor \frac{2d-3(\gamma +2)}{2}
\rfloor)}$. In any other case we take $m' = w_{(1,k'-1,\lfloor
\frac{d-3(k'-1)}{2} \rfloor)} w_{(2,\gamma +1,\lfloor \frac{2d-3(\gamma +1)}{2}
\rfloor)}$.
 Any monomial $m = w_{(1,k',1)}w_{(2,\gamma,\delta)}$ admits a suitable $2$-binomial except: $\gamma =2k'+1$ and $(\gamma, \delta )=( 2k', 2)$ when $\varepsilon = 0$.

(4) Suppose $\rho = 2$. Any monomial $m = w_{(1,\gamma,\delta)}
w_{(2,2k'+1,0)}$ admits a suitable $2$-binomial except $\gamma = k'$. The proof is analogous and we left it to the reader.
\end{rem}

\begin{prop}\label{Prop:Cubics1}   Suppose $\varepsilon = 1$. Let $w^{\alpha} = w^{\alpha_{+}}-w^{\alpha_{-}}$ be a non-trivial $3$-binomial. If $w^{\alpha_{+}}$ or $w^{\alpha_{-}}$ is one of the following:
\begin{itemize}
\item [(i)] $w_{(0,0,0)}w_{(2,0,d)}w_{(1,0,\delta)}, \;\; \delta = 0,\hdots, k;$
\item [(ii)] $w_{(0,0,0)}w_{(2,0,d)}w_{(3,d,0)}$ and $\rho \ne 0$;
\item [(iii)] $w_{(0,0,0)}w_{(1,0,0)}w_{(3,d,0)}$ and $\rho \ne 0$;
\item [(iv)] $w_{(1,0,0)}w_{(2,\gamma,d-2\gamma)}w_{(3,d,0)}, \;\; \gamma = 0,\hdots, k$ and $w_{(1,0,0)}w_{(2,k+1,0)}w_{(3,d,0)};$
\end{itemize}
then there is no an  $I_{+}(\eta)_{3}$-sequence from $w^{\alpha_{+}}$ to $w^{\alpha_{-}}$. In particular, $w^{\alpha} \notin (I_{+}(\eta)_{2}) = I_{d}$ and $I_d\varsubsetneq I(X_d)$
\end{prop}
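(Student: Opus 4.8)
The plan is to argue entirely at the level of $I_{+}(\eta)_{3}$-sequences and then to invoke Proposition \ref{Prop:Criterion}. The decisive observation is that a genuine step $w^{a_{j}}\to w^{a_{j+1}}$ of such a sequence fixes one of the three variables in the support of $w^{a_{j}}$ and replaces the complementary degree-$2$ submonomial by a \emph{non-trivial} suitable $2$-binomial partner. Hence a degree-$3$ monomial can begin an $I_{+}(\eta)_{3}$-sequence only if at least one of its three degree-$2$ submonomials (obtained by deleting one factor) admits a non-trivial suitable $2$-binomial; reversing the sequence, the same is required of its final monomial. Call a degree-$3$ monomial \emph{frozen} if none of its three degree-$2$ submonomials admits a non-trivial suitable $2$-binomial. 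If either endpoint of the non-trivial $3$-binomial $w^{\alpha}$ is frozen, then no step can be taken at that endpoint, so the only available sequence is the constant one, which cannot join $w^{\alpha_{+}}\ne w^{\alpha_{-}}$. Thus the proposition reduces to checking that every monomial listed in (i)--(iv) is frozen.

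For the freezing computations I would use two rigidity facts read off from the solution of $(*)$: the triple $(0,0,0)$ is the only element of $\W$ with $r=0$, and $(3,d,0)$ is the only element with $r=3$ (in both cases the inequalities $\alpha=\delta+2\gamma+(1-r)d\ge 0$ and $\beta=rd-2\delta-3\gamma\ge 0$ leave no other option). Since a suitable $2$-binomial partner of a pair has exactly two factors and must match the $r$-, $\gamma$- and $\delta$-sums, the admissible multiset of $r$-values of the partner is pinned down by the $r$-sum of the pair. Whenever this multiset forces a factor to be $(0,0,0)$ or $(3,d,0)$, the remaining $\gamma$- and $\delta$-sums determine the second factor uniquely and reproduce the original pair, so no non-trivial partner exists. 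The pairs not of this purely combinatorial type are exactly those handled by Lemma \ref{Lemma:02=11}, Lemma \ref{Lema:100r2} and Remark \ref{Remark:case03}(1), which list precisely the exceptional pairs admitting no suitable $2$-binomial.

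Concretely, I would run through the three submonomials of each listed monomial. In (i) the pair $\{(0,0,0),(2,0,d)\}$ is frozen by the exception ``$\gamma=0$ when $\varepsilon=1$'' of Lemma \ref{Lemma:02=11}; $\{(0,0,0),(1,0,\delta)\}$ is frozen since $r$-sum $1$ forces the factor $(0,0,0)$; and $\{(1,0,\delta),(2,0,d)\}$ is frozen by the exception ``$\gamma=0,\ \delta=0,\dots,k$'' of Lemma \ref{Lema:100r2}(ii). In (ii) and (iii) the pairs involving $(0,0,0)$ and $(3,d,0)$ are frozen by Remark \ref{Remark:case03}(1) precisely because $\rho\ne 0$, while $\{(2,0,d),(3,d,0)\}$ and $\{(0,0,0),(1,0,0)\}$ are frozen by the $r$-sum argument; for $\{(1,0,0),(3,d,0)\}$ the $r$-sum $4$ allows either the trivial split $\{1,3\}$ or the split $\{2,2\}$, and the latter would force $\delta_{1}=\delta_{2}=0$, hence $\gamma_{1},\gamma_{2}\ge k+1$ and $\gamma_{1}+\gamma_{2}\ge d+1$, a contradiction (this is where $\varepsilon=1$, i.e.\ $d$ odd, is used). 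In (iv) the pair $\{(1,0,0),(2,\gamma,d-2\gamma)\}$ is frozen by the exception of Lemma \ref{Lema:100r2}(i), $\{(1,0,0),(3,d,0)\}$ is frozen as just discussed, and $\{(2,\gamma,d-2\gamma),(3,d,0)\}$ is frozen by the $r$-sum argument.

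Having shown that each listed monomial is frozen, Proposition \ref{Prop:Criterion} yields that every non-trivial suitable $3$-binomial with such an endpoint lies outside $(I_{+}(\eta)_{2})=I_{d}$; as these binomials do belong to $I(X_{d})=I_{+}(\eta)$ (Corollary \ref{igualtat_ideals}), and at least one exists for each listed monomial (e.g.\ the binomials exhibited in Example \ref{Ex:Ideal5} for $d=5$, together with their evident analogues in general), we conclude $I_{d}\varsubsetneq I(X_{d})$. I expect the only real work to be the bookkeeping of the third paragraph: the $\{2,2\}$ subcase for $\{(1,0,0),(3,d,0)\}$ is the one place where the parity hypothesis $\varepsilon=1$ genuinely enters, and it is the step most likely to hide an off-by-one error, so I would verify the range $\max\{0,d-2\gamma\}\le\delta\le\lfloor (2d-3\gamma)/2\rfloor$ with particular care there.
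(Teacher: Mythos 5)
Your proof is correct and follows essentially the same route as the paper's: both reduce the non-existence of an $I_{+}(\eta)_{3}$-sequence to the fact that no degree-$2$ submonomial of the listed monomials admits a non-trivial suitable $2$-binomial (your ``frozen'' condition), and then verify this pair by pair using Lemma \ref{Lemma:02=11}, Lemma \ref{Lema:100r2}, Remark \ref{Remark:case03}(1) and the uniqueness of $(0,0,0)$ and $(3,d,0)$ in $\W$. The only differences are bookkeeping: the paper invokes Lemma \ref{Lemma:13=22} where you argue the $\{2,2\}$ split for $\{(1,0,0),(3,d,0)\}$ directly, and in case (iv) it re-derives by hand the exception that you simply read off from Lemma \ref{Lema:100r2}(i).
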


\begin{proof} Let $\{w^{a_{1}}, \hdots, w^{a_{t}}\}$ be  an $I_{+}(\eta)_{3}$-sequence from $w^{\alpha_{+}}$ to $w^{\alpha_{-}}$.
 So there exist $w_{(r,\gamma,\delta)} \in k[w_{(r,\gamma,\delta)}]$ and a suitable $(n-1)$-binomial $w^{\alpha'}$ such that $w^{a_{1}}-w^{a_{2}}
= w_{(r,\gamma,\delta)}w^{\alpha'}$. This implies that we can find a monomial of degree $(n-1)$ on the support of $w^{\alpha_{+}}$ (respectively $w^{\alpha_{-}}$) admitting a suitable $(n-1)$-binomial.

May we suppose that $w^{\alpha_{+}}$ belongs to the above list. From Lemmas \ref{Lemma:02=11}, \ref{Lemma:13=22} and \ref{Lema:100r2}  it follows that any monomial of degree $2$ that we can form from
$supp(w^{u_{+}})$ in (i), (ii) and (iii) do not admit a non trivial suitable $2$-binomial contradicting the existence of an $I_{+}(\eta)_{3}$-sequence from $w^{\alpha_{+}}$ to $w^{\alpha_{-}}$.

 In case  (iv) we only have to treat the monomials associated to
$(1,0,0) + (2,\gamma,d - 2\gamma)$ for $\gamma = 0,\hdots, k$ and
$(1,0,0) + (2,k+1,0)$. Fix $\gamma \in \{0,\hdots, k+1\}$ and assume that there exist $(1,\gamma_{1},\delta_{1})> (1,0,0)$ and $(2,\gamma_{2},\delta_{2})<(2,\gamma,\delta)$ such that $\gamma_{1}+\gamma_{2} = \gamma$ and $\delta_{1} + \delta_{2} = d - 2\gamma$ for $\gamma = 0,\hdots, k$; and $\delta_{1} + \delta_{2} = 0$ for $\gamma = k+1$. Write $\gamma_{2} = \gamma-\gamma_{1}$, therefore $\delta_{2} \geq \delta + 2\gamma_{1}$. From this we deduce that $\delta_{1} + \delta_{2} \geq
\delta_{1} + \delta + 2\gamma_{1}$ and hence $\delta_{1} + 2\gamma_{1}$ must be zero, that is $\delta_{1} = 0 = \gamma_{1}$, which is a contradiction.  \end{proof}

\begin{prop}\label{Lemma:Cubics}  Suppose $\varepsilon = 1$.

(1) The monomials
\begin{itemize}
\item [(i)] $w_{(0,0,0)}w_{(2,0,d)}w_{(1,0,\delta)}, \;\; \delta = 0,\hdots, k-1;$
\item [(ii)] $w_{(0,0,0)}w_{(2,0,d)}w_{(3,d,0)};$
\item [(iii)] $w_{(0,0,0)}w_{(1,0,0)}w_{(3,d,0)};$
\item [(iv)] $w_{(1,0,0)}w_{(2,\gamma,d-2\gamma)}w_{(3,d,0)}, \;\; \gamma = 0,\hdots, k-1$
\end{itemize}
admit a suitable $3$-binomial of $I_{+}(\eta)_{3}$.

 (2) The monomials  $w_{(0,0,0)}w_{(2,0,d)}w_{(1,0,k)}$, $w_{(1,0,0)}w_{(2,k,1)}w_{(3,d,0)}$ and $w_{(1,0,0)}w_{(2,k+1,0)}w_{(3,d,0)}$ do not admit a suitable $3$-binomial.
\end{prop}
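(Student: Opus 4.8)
The plan rests on one reformulation. Writing each generator as $w_{(r,\gamma,\delta)}=x^{\delta+2\gamma+(1-r)d}y^{rd-2\delta-3\gamma}z^{\delta}t^{\gamma}$, the image under $\varphi_{T_d}$ of a degree-$3$ monomial $\prod_{i=1}^{3}w_{(r_i,\gamma_i,\delta_i)}$ depends, as a monomial of $k[x,y,z,t]$, only on the three sums $\sum r_i$, $\sum\gamma_i$, $\sum\delta_i$. Hence a suitable $3$-binomial $m-m'$ is nothing but a pair of factorizations of one and the same monomial of $k[x,y,z,t]$ into three generators of $T_d$, and it is non trivial exactly when $supp(m)\cap supp(m')=\emptyset$. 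Thus part (1) asks to exhibit, for each listed $m$, a \emph{second} factorization whose generators are disjoint from those of $m$, while part (2) asks to show that the displayed factorization is the only one.

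For part (1) I would first record the four monomials explicitly: in (i) $m=x^{d+\delta}y^{d-2\delta}z^{d+\delta}$, in (iv) $m=y^{d+\gamma}z^{d-2\gamma}t^{d+\gamma}$, in (ii) $m=x^{d}z^{d}t^{d}$, and in (iii) $m=x^{d}y^{d}t^{d}$. For (i) I take $m'=w_{(1,0,k)}^{2}w_{(1,0,\delta+1)}$ (legitimate since $\delta+1\le k$), and for (iv) $m'=w_{(2,k,1)}^{2}w_{(2,\gamma+1,d-2\gamma-2)}$; in both a direct check of the $x,y,z,t$-exponents shows the product equals $m$, and the supports are disjoint because $\delta\le k-1$, respectively $\gamma\le k-1$. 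For (ii) I take $m'=w_{(1,1,(d-3)/2)}w_{(2,\gamma_2,\delta_2)}w_{(2,\gamma_3,\delta_3)}$ with $\gamma_2,\gamma_3$ even, $\ge 2$, $\gamma_2+\gamma_3=d-1$ and $\delta_i=d-3\gamma_i/2$; for (iii) I take $m'=w_{(1,\gamma_1,0)}w_{(1,\gamma_2,0)}w_{(2,k+1,0)}$ with $1\le\gamma_1,\gamma_2\le k'$ and $\gamma_1+\gamma_2=k$. In each case the three sums match those of $m$ automatically, and disjointness is clear since no factor is $w_{(0,0,0)}$, $w_{(1,0,0)}$, $w_{(2,0,d)}$ or $w_{(3,d,0)}$.

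Part (2) I would settle uniformly by a rigidity argument. Compute $m=x^{3k+1}yz^{3k+1}$, $m=y^{3k+1}zt^{3k+1}$ and $m=xy^{3k}t^{3k+2}$ in the three cases. In each, one of the variables $x,y,z,t$ occurs to total exponent $0$ and another to total exponent $1$, so in any factorization of $m$ into three generators exactly one factor carries that unit exponent and every factor misses the absent variable. Solving the two relations $(*)$ for such a factor, and using that $d$ is odd (so that $2$ is invertible modulo $d$), pins the unit-exponent factor down uniquely: it must be $w_{(1,0,k)}$, $w_{(2,k,1)}$, respectively $w_{(2,k+1,0)}$. Removing it leaves $x^{d}z^{d}$, $y^{d}t^{d}$, respectively $y^{d}t^{d}$, whose only factorization into two generators is $w_{(0,0,0)}w_{(2,0,d)}$, respectively $w_{(1,0,0)}w_{(3,d,0)}$ (again because $2$ is invertible modulo $d$). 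Hence $m'=m$, there is no non trivial partner, and the three monomials admit no suitable $3$-binomial.

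The routine but delicate part is the verification, in (1)(ii)–(iii), that the exhibited factors actually lie in $\W$: this reduces to the bounds $\max\{0,(r-1)d-2\gamma\}\le\delta\le\lfloor(rd-3\gamma)/2\rfloor$ and $\gamma\le rk'+\lfloor r\rho/3\rfloor$, which I would check on $\rho\in\{0,1,2\}$ (the only genuine constraints being that $\gamma_2+\gamma_3=d-1$ be splittable into two even numbers in $[2,\,2k'+\lfloor 2\rho/3\rfloor]$ for (ii), and $k+1\le 2k'+\lfloor 2\rho/3\rfloor$ for (iii)); when $\rho=0$ one may instead start from the suitable $2$-binomial of $w_{(0,0,0)}w_{(3,d,0)}$ furnished by Remark~\ref{Remark:case03}(1) and multiply by the remaining generator. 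The conceptual heart, by contrast, is the reformulation together with the invertibility of $2$ modulo the odd number $d$: this is exactly what forces uniqueness in part (2), and, through Lemmas~\ref{Lemma:02=11}, \ref{Lemma:13=22} and \ref{Lema:100r2}, it is the same mechanism that already drives Proposition~\ref{Prop:Cubics1}, so I expect no essential new difficulty beyond the bookkeeping just described.
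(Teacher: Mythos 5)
Your proposal is correct, and for part (1) it is essentially the paper's own proof: the authors likewise just exhibit an explicit partner factorization in each case, and your choices in (i) and (iv) coincide with theirs ($w_{(1,0,k)}^{2}w_{(1,0,\delta+1)}$ and $w_{(2,k,1)}^{2}w_{(2,\gamma+1,d-2\gamma-2)}$), while in (ii) and (iii) they pick slightly different splits, namely $w_{(1,0,k)}w_{(2,k,\lceil\frac{k+1}{2}\rceil)}w_{(2,k+1,\lfloor\frac{k+1}{2}\rfloor)}$ and $w_{(1,\lfloor\frac{k'+\lceil\rho/2\rceil}{2}\rfloor,0)}w_{(1,\lceil\frac{k'+\lceil\rho/2\rceil}{2}\rceil,0)}w_{(2,2k'+\lfloor\rho/2\rfloor,0)}$, which avoid your parity discussion on $\gamma_{2},\gamma_{3}$; the $\W$-membership checks you defer do hold for every $\rho\in\{0,1,2\}$, so this is bookkeeping, not a gap. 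Where you genuinely differ is part (2). The paper argues that a non-trivial partner would be forced to have $r$-pattern $(1,1,1)$, resp.\ $(2,2,2)$ (disjointness of supports rules out reusing $w_{(0,0,0)}$, $w_{(3,d,0)}$, etc.), and then derives a contradiction from the bounds $\delta_{i}\leq k$, resp.\ $\gamma_{i}\geq k$ or $\gamma_{i}\geq k+1$. You instead prove the stronger statement that each of the three monomials has a \emph{unique} factorization into three generators of $T_{d}$, via the absent-variable/unit-exponent rigidity in $k[x,y,z,t]$ together with oddness of $d$; this subsumes the paper's conclusion and is arguably cleaner, at the cost of passing to the ambient polynomial ring, whereas the paper stays inside $\W$. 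Both use the same combinatorial inputs and are comparably short.

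One blemish you should remove: your alternative route for (1)(ii) at $\rho=0$ — taking the suitable $2$-binomial $w_{(0,0,0)}w_{(3,d,0)}-w_{(1,k',0)}w_{(2,2k',0)}$ of Remark \ref{Remark:case03}(1) and multiplying by the remaining generator $w_{(2,0,d)}$ — yields the $3$-binomial $w_{(0,0,0)}w_{(2,0,d)}w_{(3,d,0)}-w_{(1,k',0)}w_{(2,0,d)}w_{(2,2k',0)}$, which is \emph{trivial} because $w_{(2,0,d)}$ lies in both supports; by Definition \ref{Defi:MonAdmits} it therefore does not witness that the monomial admits a suitable $3$-binomial. This is harmless only because your primary construction already covers $\rho=0$ (e.g.\ for $d=9$ it gives $m'=w_{(1,1,3)}w_{(2,4,3)}^{2}$), so the fallback should simply be dropped rather than repaired.
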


\begin{proof} (1) It is enough to exhibit explicitly a $3$-binomial in each case.
 \begin{itemize}
\item[(i)] For any $\delta \in \{0,\hdots, k-1\}$ we have  $w_{(0,0,0)}w_{(2,0,d)}w_{(1,0,\delta)} - w_{(1,0,k)}w_{(1,0,k)}w_{(1,0,\delta+1)}$ belong to $I_{+}(\eta)_{3}$.
\item[(ii)] We have $w_{(0,0,0)}w_{(2,0,d)}w_{(3,d,0)} - w_{(1,0,k)}w_{ (2,k,\lceil \frac{k+1}{2} \rceil)} w_{(2,k+1,\lfloor \frac{k+1}{2} \rfloor)} \in I_{+}(\eta)_{3}$.
\item[(iii)] We have $w_{(0,0,0)}w_{(1,0,0)}w_{(3,d,0)} - w_{(1,\lfloor
\frac{k'+ \lceil \frac{\rho}{2}\rceil}{2}\rfloor,0)}w_{(1, \lceil\frac{k'+ \lceil \frac{\rho}{2}\rceil}{2}\rceil,0)}w_{(2,2k' + \lfloor \frac{\rho}{2} \rfloor,0)} \in I_{+}(\eta)_{3}$.
\item[(iv)] For all $0 \leq \gamma \leq k-1$, $w_{(1,0,0)}w_{(2,\gamma,d-2\gamma)}w_{(3,d,0)} - w_{(2,\gamma+1,\max\{0,d-2\gamma-2\})}w_{ (2,k,1) + (2,k,1)}$ is a suitable 3-binomial.
\end{itemize}

(2) If $w_{(0,0,0)}w_{(2,0,d)}w_{(1,0,k)}-w_{(1,\gamma_{1},\delta_{1})}w_{(1,\gamma_{2},\delta_{2})}w_{(1,\gamma_{3},\delta_{3})}$ is a suitable 3-binomial, we must have $\gamma_{1} = \gamma_{2} = \gamma_{3} = 0$ and $\delta_{1} + \delta_{2} + \delta_{3} = 3k+1$. However, $\delta_{1},\delta_{2},\delta_{3} \leq k$.
 If $w_{(2,\gamma_{1},\delta_{1})}w_{(2,\gamma_{2},\delta_{2})}w_{(2,\gamma_{3},\delta_{3})}$ forms a $3$-binomial, in these cases, since $\delta_{1}+\delta_{2}+\delta_{3} \in \{0,1\}$, we must have  $\gamma_{1},\gamma_{2},\gamma_{3} \geq k$. But when $\gamma = k$, $\gamma_{1}+\gamma_{2}+\gamma_{3} = 3k+1$ implies that some $(2,\gamma_{i},\delta_{i}) = (2,k,1)$.  Finally, if $\gamma = k+1$, then $\gamma_{1},\gamma_{2},\gamma_{3} \geq k+1$ hence we find a similar argument.
 \end{proof}

Notice that the last two Propositions are false for even values of $d$. For instance we have that for $d$ even $\{w_{(0,0,0)}w_{(1,0,0)}w_{(2,0,d)}, w_{(1,0,k)}^{2}w_{(1,0,0)}\}$ is a $I_{+}(\eta)_{3}$-sequence. For sake of completeness we exhibit a complete example.

\begin{ex} \em We center in $I(X_{4}) = I_{4}$.
We only have to check that all monomials as in Proposition \ref{Lemma:Cubics}(2)  contain a
submonomial of degree $2$ admitting a non trivial suitable $2$-binomial. Indeed,
$w_{(0,0,0)}w_{(2,0,4)} - w_{(1,0,2)}^{2}$ and $w_{(1,0,0)}w_{(3,4,0)} - w_{(2,2,0)}^{2}$ are suitable $2$-binomials of $I_{4}$, from which the result follows.
\end{ex}

In the sequel we fix $n \geq 3$, otherwise indicated.
Any non trivial suitable $n$-binomial $w^{\alpha} = w^{\alpha_{+}} - w^{\alpha_{-}}$ is associated to a lattice point $\alpha$ of the form:
$$a(0,0,0) + \sum_{i=1}^{b} (1,\gamma_{i}^1,\delta_{i}^1) + \sum_{j=1}^{c} (2,\gamma_{j}^2,\delta_{j}^2) + e(3,d,0)$$
$$ - A(0,0,0)
 - \sum_{s=1}^{B} (1,\gamma_{s}^{1}, \delta_{s}^{1})
- \sum_{r=1}^{C} (2,\gamma_{r}^{2}, \delta_{r}^2) - E(3,d,0),$$
for integers $0 \leq a,b,c,e,A,B,C,E \leq n$ and $aA = 0 = eE$. Since $w^{\alpha}$ is a suitable $n$-binomial, we have restrictions $a + b + c + e = A+B+C+E$ and $b + 2c + 3e = B+2C+3E$.

\begin{prop} \label{Prop:Reductiontype12}
Let $w^{\alpha} = w^{\alpha_{+}}-w^{\alpha_{-}}$ be a non trivial suitable
$n$-binomial with $w_{(0,0,0)} \in supp(w^{\alpha})$ or $w_{(3,d,0)} \in supp(w^{\alpha})$. Assume that $w^{\alpha_{+}},w^{\alpha_{-}} \notin \mathcal{M}_{3}^{\rho}$. Then there exist $I_{+}(\eta)_{n}$-sequences $\{w^{\alpha_{+}}, \hdots, w^{\alpha_{+}'}\} $ and $\{w^{\alpha_{-}'}, \hdots, w^{\alpha_{-}}\}$ where $w_{(0,0,0)},w_{(3,d,0)} \notin supp(w^{\alpha_{+}'}) \cup supp(w^{\alpha_{-}'})$.
\end{prop}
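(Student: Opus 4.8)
The plan is to construct the two required sequences independently and symmetrically. Since $w^{\alpha_+}$ and $w^{\alpha_-}$ enter the statement in identical roles (the second sequence is just read from $w^{\alpha_-'}$ back to $w^{\alpha_-}$), it suffices to prove the following: any monomial $m$ of degree $n\ge 3$ that occurs as one part of a non-trivial suitable $n$-binomial, satisfies $m\notin\mathcal{M}_3^{\rho}$, and has $w_{(0,0,0)}$ or $w_{(3,d,0)}$ in its support, can be joined by an $I_{+}(\eta)_{n}$-sequence to a monomial supported away from $w_{(0,0,0)}$ and $w_{(3,d,0)}$. The elementary step is the one described just before Lemma \ref{Lemma:02=11}: replacing a $2$-submonomial of $m$ by the other side of a suitable $2$-binomial yields a trivial suitable $n$-binomial (the remaining $n-2$ variables being the common support), hence a legitimate step. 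I would record $m$ by the counts $(a,b,c,e)$ of its variables with $r=0,1,2,3$, so that $a+b+c+e=n$ and the weight $W:=b+2c+3e$ is an invariant of every step considered below.

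First I would fix the three admissible moves and observe that each strictly lowers the extremal count $a+e$: move (A), Lemma \ref{Lemma:02=11}, trades $w_{(0,0,0)}w_{(2,\gamma,\delta)}$ for a product of two $r=1$ variables; move (B), Lemma \ref{Lemma:13=22}, trades $w_{(1,\gamma,\delta)}w_{(3,d,0)}$ for a product of two $r=2$ variables; and, when $\rho=0$, move (C), Remark \ref{Remark:case03}(1), trades $w_{(0,0,0)}w_{(3,d,0)}$ for an $r=1$ times an $r=2$ variable. All three preserve $n$ and $W$, never decrease the number of middle variables ($r\in\{1,2\}$), and lower $a+e$. Consequently, once a middle variable is present it persists, and a move is available at every stage \emph{unless} the current monomial is supported purely on $\{w_{(0,0,0)},w_{(3,d,0)}\}$. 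Thus the reduction terminates in an extremal-free monomial, provided I can exclude such ``pure-extremal'' stalls (other than the one dispatched by (C) when $\rho=0$).

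Next I would rule out the stalls. Applying $A+B+C+E=n$ and $B+2C+3E=W$ to the opposite part $m'$, together with the non-triviality relations $aA=eE=0$, forces $W\ge n$ when $a>0$ and $W\le 2n$ when $e>0$. A pure-$0$ monomial ($e=0,\ a=n$) has $W=0<n$ and a pure-$3$ monomial ($a=0,\ e=n$) has $W=3n>2n$, so neither occurs; a stall must be a mixed $m=w_{(0,0,0)}^{a}w_{(3,d,0)}^{e}$ with $a,e\ge 1$, which is cleared by (C) when $\rho=0$. For $\rho\ne 0$ I would derive a contradiction from the $\gamma,\delta$-sums: both extremal variables have $\delta=0$, so $m'$ is supported on $r=1,2$ variables all with $\delta=0$, whose $r=1$ members satisfy $\gamma\le k'$ and whose $r=2$ members satisfy $\gamma\le\lfloor 2d/3\rfloor$; with multiplicities $B=2a-e$ and $C=2e-a$ the $\gamma$-sum of $m'$ is at most $Bk'+C\lfloor 2d/3\rfloor$, which is strictly below the required value $ed$ exactly because $k'<d/3$ and $\lfloor 2d/3\rfloor<2d/3$ when $\rho\ne 0$. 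Hence no mixed pure-extremal monomial is realizable for $\rho\ne 0$, and the reduction never stalls.

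The main obstacle is the exceptional-pair bookkeeping. Moves (A) and (B) are blocked precisely on the exceptional pairs isolated in Lemmas \ref{Lemma:02=11}, \ref{Lemma:13=22}, \ref{Lema:100r2} and Remark \ref{Remark:case03}, so the designated $r=2$ partner of a $w_{(0,0,0)}$, or $r=1$ partner of a $w_{(3,d,0)}$, may fail to admit a suitable $2$-binomial. Here I would exploit the counting slack: when $e=0$ the bound $W\ge n$ upgrades to $c\ge a$, and when $a=0$ it gives $b\ge e$, furnishing alternative partners, while in the genuinely mixed case a preliminary (B)-move manufactures a fresh, non-exceptional $r=2$ variable before the (A)-move (and symmetrically). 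The delicate point — and the reason the hypothesis $m\notin\mathcal{M}_3^{\rho}$ is exactly what is needed — is that for $n=3$ with $\varepsilon=1$ there is no room to reroute, and the monomials on which every partner remains exceptional are precisely those collected in $\mathcal{M}_3^{\rho}$, whose obstruction is the content of Proposition \ref{Prop:Cubics1}; for $n\ge 4$ (and for even $d$, where $\mathcal{M}_3^{\rho}$ is empty) the extra variables always supply a legal reroute, so the hypothesis is automatic. Verifying that this blocked set is neither larger nor smaller than $\mathcal{M}_3^{\rho}$ is the crux, and it rests on combining the $r$-multiset counting of the previous paragraphs with the $\gamma,\delta$-realizability constraints.
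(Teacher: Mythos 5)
Your overall strategy is the same as the paper's: reduce the count $a+e$ of extremal variables using the three degree-$2$ moves extracted from Lemmas \ref{Lemma:02=11}, \ref{Lemma:13=22} and Remark \ref{Remark:case03}, and your weight-counting exclusion of pure-extremal stalls is correct (and in fact cleaner than the corresponding unproved assertions in the paper's Cases 1 and 2). But the crux, which you yourself defer (``verifying that this blocked set is neither larger nor smaller than $\mathcal{M}_{3}^{\rho}$''), is where the proposal breaks, and it breaks for a structural reason: your repertoire of moves consists only of degree-$2$ replacements, and that repertoire is provably insufficient. Take $d=7$ (so $k=3$, $k'=2$, $\rho=1$, $\varepsilon=1$) and the non-trivial suitable $4$-binomial
\begin{equation*}
w_{(0,0,0)}w_{(1,0,0)}w_{(1,0,1)}w_{(2,0,7)} \;-\; w_{(1,0,2)}^{4},
\end{equation*}
whose two sides are not in $\mathcal{M}_{3}^{1}$ (that set consists of degree-$3$ monomials). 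Every degree-$2$ submonomial of the left side is exceptional: $w_{(0,0,0)}w_{(2,0,7)}$ by Lemma \ref{Lemma:02=11} ($\gamma=0$, $\varepsilon=1$); $w_{(1,0,0)}w_{(2,0,7)}$ and $w_{(1,0,1)}w_{(2,0,7)}$ by Lemma \ref{Lema:100r2}(ii) ($\gamma=0$, $\delta\le k$); and $w_{(0,0,0)}w_{(1,0,\delta)}$, $w_{(1,0,0)}w_{(1,0,1)}$ admit no non-trivial suitable $2$-binomial at all. So no move (A), (B), (C) and no ``preliminary reroute'' is available, contradicting your claim that for $n\ge 4$ the extra variables always supply a legal reroute. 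The paper clears exactly this configuration with the suitable $3$-binomials of Proposition \ref{Lemma:Cubics} --- here $w_{(0,0,0)}w_{(2,0,7)}w_{(1,0,0)} - w_{(1,0,3)}^{2}w_{(1,0,1)}$, used as a single trivial step of the $I_{+}(\eta)_{4}$-sequence, which lands on $w_{(1,0,3)}^{2}w_{(1,0,1)}^{2}$ --- and these degree-$3$ moves are entirely absent from your proof.

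Relatedly, your identification of the blocked set with $\mathcal{M}_{3}^{\rho}$ is not correct as stated. There are monomials on which every partner is exceptional that do not lie in $\mathcal{M}_{3}^{\rho}$: for instance $w_{(0,0,0)}^{a}w_{(1,k',0)}^{b}w_{(2,2k',1)}^{c}w_{(3,d,0)}^{e}$ when $\rho=1$ (this occurs for even $d$ as well, where $\mathcal{M}_{3}^{\rho}$ is empty, and all pairs, including the mixed ones by Remark \ref{Remark:case03}(2), are exceptional), or $w_{(0,0,0)}w_{(2,0,d)}w_{(1,0,k)}$ for odd $d$ (Proposition \ref{Prop:Cubics1}(i) includes $\delta=k$, whereas $\mathcal{M}_{3}^{\rho}$ stops at $\delta=k-1$). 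For these the correct and needed statement is not that a reroute exists but that such monomials never occur as a side of a non-trivial suitable $n$-binomial, so the hypothesis of the proposition is vacuous for them; this vacuity is what the paper establishes (or asserts) in its Cases 2--4, using sum constraints of the same kind you used for the stall analysis, together with Proposition \ref{Lemma:Cubics}(2). To close your argument you would need both: (1) enlarge the move set to include the suitable $3$-binomials of Proposition \ref{Lemma:Cubics}, and (2) prove vacuity for the residual all-exceptional configurations rather than asserting they coincide with $\mathcal{M}_{3}^{\rho}$.
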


\begin{proof} We write $w^{\alpha^{+}} = a(0,0,0) + \sum_{i=1}^{b} (1,\gamma_{i}^1,\delta_{i}^1) + \sum_{j=1}^{c} (2,\gamma_{j}^2,\delta_{j}^2) + e(3,d,0)$ and we assume that $a > 0$ or $e > 0$. Analogous we deal with $w^{\alpha_{-}}$. It is enough to see that we can always decrease the value of $a+e$ until we reach 0. We analyze separately several cases according to the  value of $d$:

\noindent \underline{Case 1:} Assume $\varepsilon =0$ and $\rho =0$. First we observe that the hypothesis $w^{\alpha }$ non-trivial implies $(b,c)\ne (0,0)$ or $(b,c)=(0,0)$ and $a=e$. If   $(b,c)=(0,0)$ and $a=e$ we have $w_{(0,0,0)}^aw_{(3,d,0)}^a=w_{(1,k',0)}^aw_{(2,2k',0)}^a$. Otherwise, since $m= w_{(3,d,0)}w_{(1,\gamma_{1}^1,\delta_{1}^1)}$
(resp. $m= w_{(0,0,0)}w_{(2,\gamma_{1}^2,\delta_{1}^2)}$)  admits a special  suitable $2$-binomial $m-m'$ with $m'=w_{(2,\gamma_{c+1}^2,\delta_{c+1}^2)}w_{(2,\gamma_{c+2}^2,\delta_{c+2}^2)}$ (resp. $m'=w_{(1,\gamma_{b+1}^1,\delta_{b+1}^1)}w_{(1,\gamma_{b+2}^1,\delta_{b+2}^1)}$),
 we can write
 $$w^{a_{1}} := w_{(0,0,0)}^{a}\prod_{i=2}^{b}w_{(1,\gamma_{i}^1,\delta_{i}^1)}\prod_{j=1}^{c+2}w_{(2,\gamma_{j}^2,\delta_{j}^2)}w_{(3,d,0)}^{e-1}$$

$$\text{(resp. } w^{a_{1}} := w_{(0,0,0)}^{a-1}\prod_{i=1}^{b+2}w_{(1,\gamma_{i}^1,\delta_{i}^1)}\prod_{j=2}^{c}w_{(2,\gamma_{j}^2,\delta_{j}^2)}w_{(3,d,0)}^{e}\text{ )}$$
\noindent and  build an $I_{+}(\eta)_{n}$-sequence $\{w^{\alpha_{+}},w^{a_1} \}$ with $deg _{w(0,0,0)}w^{a_1}+deg _{w(3,d,0)}w^{a_1}<a+e=deg _{w(0,0,0)}w^{\alpha _+}+deg _{w(3,d,0)}w^{\alpha _+}$ and we have decreased by 1 the value of $a+e$.

\noindent \underline{Case 2:} Assume $\varepsilon =0$ and $1\le \rho \le 2$. The hypothesis $w^{\alpha }$ non-trivial implies $(b,c)\ne (0,0)$ and we can argue as in Case 1 unless
$w^{\alpha_{+}}=w_{(0,0,0)}^aw_{(1,k',0)}^bw_{(2,2k',1)}^{c}w_{(3,d,0)}^e$ (resp. $w^{\alpha_{+}}=w_{(0,0,0)}^aw_{(1,k',1)}^bw_{(2,2k'+1,0)}^{c}w_{(3,d,0)}^e$) but such $w^{\alpha_{+}}$ does not admit a non-trivial $n$-binomial $w^{\alpha_{+}}-w^{\alpha_{-}}$.

\noindent \underline{Case 3:} Assume $\varepsilon =1$ and $ \rho =0$.
  Since $w_{(0,0,0)}w_{3,d,0)}=w_{(1,k',0)}w_{(2,2k',0)}$ we can argue as in the Case 1 unless $w^{\alpha_{+}}=w_{(0,0,0)}^aw_{(1,0,0)}^bw_{(2,0,d)}^c$ or $w^{\alpha_{+}}=w_{(1,0,0)}^bw_{(2,0,d)}^cw_{(3,d,0)}^e$, the fact that
$w^{\alpha_{+}}-w^{\alpha_{-}}$ is non-trivial implies $b,c>0$ and the hypothesis $w^{\alpha_{+}}\notin \mathcal{M}_{3}^{\rho}$ implies $a+b+c>3$ (resp.  $b+c+e>3$). Set $m=w_{(0,0,0)}w_{(1,0,0)}w_{(2,0,,d)}$ (resp. $m=w_{(1,0,0)}w_{(2,0,d)}w_{(3,d,0)}$). By Proposition \ref{Lemma:Cubics}
$w_{(0,0,0)}w_{(2,0,d)}w_{(1,0,0)} - w_{(1,0,k)}w_{(1,0,k)}w_{(1,0,1)}$
(resp. $w_{(1,0,0)}w_{(2,0,d)}w_{(3,d,0)} - w_{(2,1,d-2\})}w_{ (2,k,1) + (2,k,1)}$) and we apply the same game decreasing $a$ (resp. $e$) by one.

\noindent \underline{Case 4:} Assume $\varepsilon =1$ and $ 1 \leq \rho \leq 2$. Notice that from the hypothesis $w^{\alpha}$ non trivial we have $(b,c) \neq 0$. So we proceed as in Case 1 unless $w^{\alpha^{+}} = w_{(0,0,0)}^{a}w_{(1,0,0)}^{b}w_{(1,k',0)}^{c}w_{(2,0,d)}^{f}w_{(2,2k',1)}^{g}w_{(3,d,0)}^{e}$ with $(b,c,f,g) \neq (0,0,0,0)$ (resp. $w^{\alpha_{+}} =  w_{(0,0,0)}^{a}w_{(1,0,0)}^{b}w_{(1,k',1)}^{c}w_{(2,0,d)}^{f}w_{(2,2k'+1,0)}^{g}w_{(3,d,0)}^{e}$ and $(b,c,f,g) \neq (0,0,0,0)$). Since $w^{\alpha_{+}} \notin \mathcal{M}_{3}^{1}$ we have
$(c,g) \neq (0,0)$ or $a + b + f + g+ e > 3$.
By Proposition \ref{Lemma:Cubics} we have
$w_{(0,0,0)}w_{(1,0,0)}w_{(2,0,d)} - w_{(1,0,k)}w_{(1,0,k)}w_{(1,0,1)}$,
$w_{(0,0,0)}w_{(2,0,d)}w_{(3,d,0)} - w_{(1,0,k)}w_{(2,k,\lfloor \frac{k+1}{2} \rfloor)}w_{(2,k+1,\lfloor \frac{k+1}{2}\rfloor)}$,
$w_{(0,0,0)}w_{(1,0,0)}w_{(3,d,0)}$ $ - w_{(1,1,0)}w_{(1,k',0)}w_{(2,2k',0)}$
and
$w_{(1,0,0)}w_{(2,0,d)}$ $w_{(3,d,0)} - w_{(2,1,d-2)}w_{(2,k,1)}w_{(2,k,1)}$ are non trivial suitable $3$-binomials (resp. $w_{(0,0,0)}w_{(1,0,0)}w_{(2,0,d)} - w_{(1,0,k)}w_{(1,0,k)}w_{(1,0,2)}$,
$w_{(0,0,0)}w_{(2,0,d)}w_{(3,d,0)}$ $ - w_{(1,0,k)}w_{(2,k,\lfloor \frac{k+1}{2} \rfloor)}w_{(2,k+1,\lfloor \frac{k+1}{2}\rfloor)}$,
$w_{(0,0,0)}w_{(1,0,0)}w_{(3,d,0)} - w_{(1,1,0)}w_{(1,k',0)}w_{(2,2k'+1,0)}$ and
$w_{(1,0,0)}w_{(2,0,d)}$ $w_{(3,d,0)} - w_{(2,1,d-2)}w_{(2,k,1)}w_{(2,k,1)}$).
Then we argue as in Case 3 decreasing $a$ and $e$ unless $w^{\alpha_{+}} = w_{(0,0,0)}^{a}w_{(1,k',0)}^{c}w_{(2,2k',1)}^{g}w_{(3,d,0)}^{e}$ (resp. $w_{(0,0,0)}^{a}w_{(1,k',1)}^{c}$ $ w_{(2,2k'+1,0)}^{g}w_{(3,d,0)}^{e}$)
but such monomial does not admit a non trivial suitable $n$-binomial and the proof is completed.
\end{proof}

\begin{rem} \rm It is easy to observe that any suitable $n$-binomial $w^{\alpha} = w^{\alpha_{+}}-w^{\alpha_{-}} = \prod_{i=1}^{b}w_{(1,\gamma_{i}^{1},\delta_{i}^{1})}\prod_{j=1}^{c}w_{(2,\gamma_{j}^{2},\delta_{j}^{2})}
- \prod_{i=1}^{c'}w_{(1,\gamma_{i}^{3},\delta_{i}^{3})}\prod_{j=1}^{b'}w_{(2,\gamma_{j}^{4},\delta_{j}^{4})}$ satisfies $b = b'$ and $c = c'$.

\end{rem}

\begin{ex}\rm \label{Ex:elem4} (1) Fix $d=4$ and consider   the non trivial $3$-binomial
$$w_{(0,0,0)}w_{(1,0,0)}w_{(2,0,4)} - w_{(1,0,1)}w_{(1,0,1)}w_{(1,0,2)}.$$
Since $w_{(0,0,0)}w_{(2,0,4)} - w_{(1,0,2)}^{2}$ is a non trivial $2$-binomial, we define $w^{a_{1}}
= w_{(1,0,0)}w_{(1,0,2)}^{2}$ and we get
 an $I_{+}(\eta)_{3}$-sequence  $\{w_{(0,0,0)}w_{(1,0,0)}w_{(2,0,4)},w_{(1,0,0)}w_{(1,0,2)}^{2},w_{(1,0,1)}w_{(1,0,1)}w_{(1,0,2)}\}$ from $w_{(0,0,0)}w_{(1,0,0)}w_{(2,0,4)}$ to  $w_{(1,0,1)}w_{(1,0,1)}w_{(1,0,2)}$ where $w^{\alpha_{+}'} = w^{a_{1}}$.

\vskip 2mm (2) Fix $d=5$ and consider  $I(X_{5})$ and the non trivial $4$-binomial
$w_{(0,0,0)}w_{(1,0,0)}w_{(2,0,5)}w_{(3,5,0)}- w_{(1,1,0)}^{2}w_{(2,1,3)}w_{(2,2,2)}$.
We take the suitable $3$-binomial
 $w_{(0,0,0)}w_{(1,0,0)}w_{(2,0,5)}
- w_{(1,0,1)}w_{(1,0,2)}^{2}$ and we define
$w^{a_{1}} := w_{(1,0,1)}w_{(1,0,2)}^{2}w_{(3,5,0)}$. We observe that $w_{(0,0,0)}\notin supp(w^{a_1})$.
The monomial $w_{(1,0,1)}w_{(3,5,0)}$ admits a suitable $2$-binomial
$w_{(1,0,1)}w_{(3,5,0)}-w_{(2,2,1)}w_{(2,3,0)}$. We now define $w^{a_{2}}:= w_{(1,0,2)}^{2}w_{(2,2,1)}w_{(2,3,0)}$. We obtain the $I_{+}(\eta)_{4}$-sequence
$$\{w_{(0,0,0)}w_{(1,0,0)}w_{(2,0,5)}w_{(3,5,0)}, w_{(1,0,1)}w_{(1,0,2)}^{2}w_{(3,5,0)},
w_{(1,0,2)}^{2}w_{(2,2,1)}w_{(2,3,0)}\}$$
with  $w_{(0,0,0)}, w_{(3,5,0)}\notin supp(w^{a_2})$.

\vskip 2mm (3) Fix $d=5$ and  consider the non trivial $4$-binomial
$$w_{(0,0,0)}w_{(2,0,5)}w_{(2,2,1)}w_{(2,3,0)}- w_{(1,0,2)}^{3}w_{(3,5,0)}.$$
Since $w_{(0,0,0)}w_{(2,2,1)}- w_{(1,1,0)}w_{(1,1,1)}$ and $w_{(1,0,2)}w_{(3,5,0)} - w_{(2,2,2)}w_{(2,3,0)}$
 are suitable $2$-binomials,
  $\{w_{(0,0,0)}w_{(2,0,5)}w_{(2,2,1)}w_{(2,3,0)}, w_{(1,1,0)}$ $w_{(1,1,1)}w_{(2,0,5)}w_{(2,3,0)}\}$
  and $\{w_{(1,0,2)}^{2}w_{(2,2,2)}w_{(2,3,0)}, w_{(1,0,2)}^{3}w_{(3,5,0)}\}$
  are the $I_{+}(\eta)_{4}$-sequences required in Proposition \ref{Prop:Reductiontype12}.
  Thus is an $I_{+}(\eta)_{4}$-sequence. Furthermore, gluing them  we obtain the $I_{+}(\eta)_{4}$-sequence
$$\{w_{(0,0,0)}w_{(2,0,5)}w_{(2,2,1)}w_{(2,3,0)}, w_{(1,1,0)}w_{(1,1,1)}w_{(2,0,5)}w_{(2,3,0)}, w_{(1,0,2)}^{2}w_{(2,2,2)}w_{(2,3,0)}, w_{(1,0,2)}^{3}w_{(3,5,0)}\}.$$
\end{ex}

We now analyze whether a monomial $m=w_{(r_{1},\gamma_{1},\delta_{1})}w_{(r_{2},\gamma_{2},\delta_{2})}$ with $r_{1},r_{2} \in \{1,2\}$ admits a non-trivial suitable $2$-binomial $m-m'$ with $m' = w_{(r_{3},\gamma_{3},\delta_{3})}w_{(r_{4},\gamma_{4},\delta_{4})}$ and  $r_{3},r_{4} \in \{1,2\}$. This problem can be reformulate as follows. For which $s \geq 0$, setting $\gamma_{3}:= \gamma_{1}\pm s$ and $\gamma_{4}:= \gamma_{2} \mp s$, there exist
$max\{0,(r_{i}-1)d-2\gamma_{i}\} \leq \delta_{i} \leq \lfloor \frac{r_{i}d-3\gamma_{i}}{2} \rfloor$, $i = 3,4$, such that $\delta_{3}+\delta_{4} = \delta_{1}+\delta_{2}$.

\begin{lem}  \label{Lemma:MinMaxBound} With the above notation, there exist such $\delta_{3}$ and $\delta_{4}$ with the following exceptions.
\begin{enumerate}
\item For any  $1\le r_{1},r_{2}\le 2$, if $(r_{1}d_{1}-3\gamma_{1})$ and $(r_{2}d_{2}-3\gamma_{2})$ are even, $s$ is odd, and
$\delta_{1}$ and $\delta_{2}$ are the maximum ones. We call it the {\em maximum bound problem.}
\item Assume $r_{2} = 2$.
\begin{itemize}
\item [(i)] If $r_{1} = 1$, when doing $\gamma_{1}+s$ and $\gamma_{2}-s$
we have $\gamma_{2}-s < k+\varepsilon$ and $\delta_{1}+\delta_{2} < max\{0,d-2\gamma_{2}-2s\}$.
\item [(ii)] If $r_{1} = 2$, when doing $\gamma_{1}+s$ and $\gamma_{2}-s$ we have $\delta_{1} + \delta_{2} < max\{0,d-2\gamma_{1}-2s\} + max\{0,d-2\gamma_{2}+2s\}$ and one of the following cases:
\begin{itemize}
\item [(a)] $\gamma_{1} \geq k+\varepsilon$ and $\gamma_{2} - s < k+\varepsilon$,
\item [(b)] $\gamma_{1}  < k+\varepsilon, \gamma_{1} + s \geq k+\varepsilon, \gamma_{2} \geq k+\varepsilon$ and $\gamma_{1} > \gamma_{2}-s$,
\item [(c)] $\gamma_{1},\gamma_{2} < k+\varepsilon, \gamma_{1}+s > k+\varepsilon$.
\end{itemize}

\end{itemize}
We call it the {\em minimum bound problem.}
\end{enumerate}
\end{lem}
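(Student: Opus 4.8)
The plan is to reduce the whole statement to an elementary question about integer intervals. Writing $L_i := \max\{0,(r_i-1)d-2\gamma_i\}$ and $U_i := \lfloor (r_id-3\gamma_i)/2\rfloor$ for the lower and upper bounds imposed on $\delta_i$ by membership in $\W$, the key observation is that as $\delta_3$ ranges over the integers in $[L_3,U_3]$ and $\delta_4$ over $[L_4,U_4]$, the sum $\delta_3+\delta_4$ runs through \emph{every} integer of the interval $[L_3+L_4,\,U_3+U_4]$ (the Minkowski sum of two blocks of consecutive integers is again such a block, provided both boxes are nonempty, i.e.\ $\gamma_3,\gamma_4$ lie in the admissible $\gamma$-ranges). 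Hence the required $\delta_3,\delta_4$ with $\delta_3+\delta_4=S:=\delta_1+\delta_2$ exist if and only if
\[
L_3+L_4 \le S \le U_3+U_4.
\]
Since both triples indexing the factors of $m$ lie in $\W$, we already have $L_1+L_2\le S\le U_1+U_2$, so a failure can occur in exactly two ways: either $S>U_3+U_4$ (the maximum bound problem) or $S<L_3+L_4$ (the minimum bound problem). I would treat these separately.

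For the \emph{maximum bound problem} I would exploit that $T:=(r_1d-3\gamma_1)+(r_2d-3\gamma_2)=(r_1d-3\gamma_3)+(r_2d-3\gamma_4)$ is invariant under the shift $\gamma_3=\gamma_1\pm s,\ \gamma_4=\gamma_2\mp s$. Using the elementary identity $\lfloor A/2\rfloor+\lfloor B/2\rfloor=\lfloor (A+B)/2\rfloor$, valid unless $A$ and $B$ are both odd in which case the left side is one smaller, and noting that shifting $\gamma_i$ by $s$ flips the parity of $r_id-3\gamma_i$ exactly when $s$ is odd, one gets $U_3+U_4=U_1+U_2$ except when both $r_1d-3\gamma_1$ and $r_2d-3\gamma_2$ are even and $s$ is odd, where $U_3+U_4=U_1+U_2-1$. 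Consequently $S>U_3+U_4$ forces $S=U_1+U_2$ (that is, $\delta_1,\delta_2$ are the maximal admissible values) together with these parity hypotheses, which is precisely case (1).

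For the \emph{minimum bound problem} I would use that $L_i=0$ whenever $r_i=1$, while for $r_i=2$ one has $L_i=\max\{0,d-2\gamma_i\}$, with $d-2\gamma>0$ equivalent (for integer $\gamma$) to $\gamma<k+\varepsilon$. Under the hypothesis $r_2=2$ I would split according to $r_1$. When $r_1=1$ (subcase (i)) we have $L_3=0$, so $S<L_3+L_4$ reads $S<\max\{0,d-2\gamma_4\}$, which can genuinely fail only once the shift pushes $\gamma_4$ below the threshold $k+\varepsilon$ so that $d-2\gamma_4>0$; this yields the two conditions recorded in 2(i). When $r_1=2$ (subcase (ii)) both $L_3$ and $L_4$ may be positive, and $S<L_3+L_4=\max\{0,d-2\gamma_3\}+\max\{0,d-2\gamma_4\}$. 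Here I would enumerate the sign patterns of $d-2\gamma_3$ and $d-2\gamma_4$ by tracking on which side of $k+\varepsilon$ the shifted values $\gamma_1+s$ and $\gamma_2-s$ land, and compare with the bound $L_1+L_2\le S$ already satisfied by $m$; this comparison rules out every configuration except the three listed in (a), (b), (c).

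The routine parts — the interval/parity computation for the maximum bound, and the one-term analysis of the minimum bound — are straightforward. I expect the genuine obstacle to be the bookkeeping in subcase 2(ii): one must follow how the opposite shifts $\gamma_1+s$ and $\gamma_2-s$ move the two arguments across the single threshold $k+\varepsilon$, and verify that the inequality $S<L_3+L_4$ can \emph{newly} fail (given that it held for the unshifted pair, for which $L_1+L_2\le S$) only in the precise threshold-crossing patterns (a), (b), (c). Keeping the comparison with $L_1+L_2\le S$ organized across these overlapping configurations is where the care is required.
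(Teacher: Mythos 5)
Your proposal is correct and follows essentially the same route as the paper's proof: reduce the existence of $\delta_3,\delta_4$ to the interval condition $L_3+L_4\le \delta_1+\delta_2\le U_3+U_4$ (Minkowski sum of integer intervals), derive the maximum bound exception from the floor-function/parity identity under the shift by $s$, and obtain the minimum bound cases by comparing $L_3+L_4$ with $L_1+L_2$ as the shifted $\gamma$'s cross the threshold $k+\varepsilon$. If anything you are more explicit than the paper, which leaves the interval reduction implicit and disposes of case (2) in a single sentence; the bookkeeping you defer in subcase 2(ii) is precisely what the paper also leaves to the reader.
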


\begin{proof}
We have
$\max\{0,(r_{1}-1)d - 2\gamma_{1}\} + \max\{0,(r_{2}-1)d - 2\gamma_{2}\}
\leq \delta_{1} + \delta_{2} \leq   \lfloor \frac{r_{1}d - 3\gamma_{1}}{2} \rfloor
+ \lfloor \frac{r_{2}d - 3\gamma_{2}}{2} \rfloor$ and
$\max\{0,(r_{1}-1)d - 2(\gamma_{1} + s)\} + \max\{0,(r_{2}-1)d - 2(\gamma_{2} - s)\}
\leq \delta_{3} + \delta_{4} \leq  \lfloor \frac{r_{1}d - 3(\gamma_{1}+s)}{2} \rfloor
+ \lfloor \frac{r_{2}d - 3(\gamma_{2}-s)}{2} \rfloor $.
So the result is clear for those values
$\max\{0,(r_{1}-1)d - 2(\gamma_{1} + s)\} + \max\{0,(r_{2}-1)d - 2(\gamma_{2} - s)\}
\leq \delta_{1} + \delta_{2} \leq  \lfloor \frac{r_{1}d - 3(\gamma_{1}+s)}{2} \rfloor
+ \lfloor \frac{r_{2}d - 3(\gamma_{2}-s)}{2} \rfloor$. Let us study the remainder cases.

(1) From the properties of the floor and ceiling functions we have
$$\lfloor \frac{r_{1}d - 3\gamma_{1}}{2} \rfloor + \lfloor \frac{r_{2}d - 3\gamma_{2}}{2} \rfloor \leq \lfloor \frac{r_{1}d - 3\gamma_{1} + r_{2}d - 3\gamma_{2}}{2} \rfloor = $$
$$\lfloor \frac{r_{1}d - 3(\gamma_{1}+s) + r_{2}d - 3(\gamma_{2}-s)}{2} \rfloor
\leq \lfloor \frac{r_{1}d - 3(\gamma_{1}+s)}{2} \rfloor
+ \lfloor \frac{r_{2}d - 3(\gamma_{2}-s)}{2} \rfloor + 1.$$
Furthermore $\lfloor \frac{r_{1}d - 3(\gamma_{1}+s)}{2} \rfloor
+ \lfloor \frac{r_{2}d - 3(\gamma_{2}-s)}{2} \rfloor < \lfloor \frac{r_{1}d - 3\gamma_{1}}{2} \rfloor + \lfloor \frac{r_{2}d - 3\gamma_{2}}{2} \rfloor$ only when
$(r_{1}d-3\gamma_{1})$ and $(r_{2}d-3\gamma_{2})$ are even and $s$ is odd. From this (1) follows immediately.

(2) is obtained determining which values $\max\{0,(r_{1}-1)d - 2\gamma_{1}\} + \max\{0,(r_{2}-1)d - 2\gamma_{2}\}\leq \delta_{1} + \delta_{2} <
\max\{0,(r_{1}-1)d - 2(\gamma_{1} + s)\} + \max\{0,(r_{2}-1)d - 2(\gamma_{2} - s)\}$.
\end{proof}

\vspace{0.3cm}
Up to here we have proved the following. Suppose given a non trivial suitable $n$-binomial $w^{\alpha} = w^{\alpha_{+}} - w^{\alpha_{-}}$ such that $w^{\alpha_{+}},w^{\alpha_{-}} \notin \mathcal{M}_{3}^{\rho}$. If  $w_{(0,0,0)} \in supp(w^{\alpha})$ or $w_{(3,d,0)} \in supp(w^{\alpha})$, there exit  $I_{+}(\eta)_{n}$-sequences $\{w^{\alpha_{+}}, \hdots, w^{\alpha_{+}'}\}$ and $\{w^{\alpha_{-}'}, \hdots, w^{\alpha_{-}}\}$ such that $w_{(0,0,0)},w_{(3,d,0)} \notin supp(w^{\alpha_{-}'}) \cup supp(w^{\alpha_{-}})$. Clearly $w^{\alpha'}:= w^{\alpha_{+}'} - w^{\alpha_{-}'} \in (I_{+}(\eta))_{n}$. Notice that $w^{\alpha'}$ could be
trivial or even more it could be zero. In the first case $\{w^{\alpha_{+}}, \hdots, w^{\alpha_{+}'}, w^{\alpha_{-}'}, \hdots, w^{\alpha_{-}}\}$ is
an $I_{+}(\eta)_{n}$-sequence. In the other case let $t_{+},t_{-} \geq 0$ be the length of the respective $I_{+}(\eta)_{n}$-sequences. Since $w^{\alpha}$ is non trivial we must have $t_{+} > 0$ or $t_{-} > 0$.
Assume $t_{+} > 0$ (analogously,  for $t_{+} = 0$ and $t_{-} > 0$). Therefore $\{w^{\alpha_{+}}, \hdots, w^{a_{t_{+}-1}}, w^{\alpha_{-}'}, \hdots, w^{\alpha_{-}}\}$ is an $I_{+}(\eta)_{n}$-sequence.
In next Proposition we deal with the case that  $w^{\alpha_{+}'} - w^{\alpha_{-}'}$ is neither trivial nor zero.

\begin{prop} \label{Prop:FinalReduction} Let $w^{\alpha} =
\prod_{i=1}^{t}w_{(1,\gamma_{i},\delta_{i})}\prod_{i=t+1}^{n}w_{(2,\gamma_{i},\delta_{i})}
- \prod_{i=1}^{t}w_{(1,\gamma_{i}',\delta_{i}')}\prod_{i=t+1}^{n}w_{(2,\gamma_{i}',\delta_{i}')}$
be a non trivial suitable $n$-binomial with $n \geq 3$. Therefore, there exist $I_{+}(\eta)_{n}$-sequences $\{w^{\alpha_{+}}, \hdots, w^{\alpha_{+}}_r\}$ and $\{w^{\alpha_{-}}, \hdots, w^{\alpha_{-}}_u\}$ with
$w^{\alpha_{+}}_r = \prod_{i=1}^{t} w_{(1,\gamma_{i}^{1}, \delta_{i}^{1})}\prod_{i=t+1}^{n} w_{(2,\gamma_{i}^{1}, \delta_{i}^{1})}$
and
$w^{\alpha_{-}}_u = \prod_{i=1}^{t} w_{(1,\gamma_{i}^{2}, \delta_{i}^{2})}\prod_{i=t+1}^{n} w_{(2,\gamma_{i}^{2}, \delta_{i}^{2})}$
 satisfying $\gamma_{i}^{1} = \gamma_{i}^{2}$ for all $i = 1,\hdots, n$.
\end{prop}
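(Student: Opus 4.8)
The plan is to connect both $w^{\alpha_+}$ and $w^{\alpha_-}$, by $I_+(\eta)_n$-sequences, to a common monomial with a prescribed $\gamma$-profile, and to read off the conclusion from the fact that the two endpoints share all the relevant invariants. First I would record what is preserved along an $I_+(\eta)_n$-sequence. Each elementary step replaces a pair of factors $w_{(r_1,\gamma_1,\delta_1)}w_{(r_2,\gamma_2,\delta_2)}$ by a pair $w_{(r_3,\gamma_3,\delta_3)}w_{(r_4,\gamma_4,\delta_4)}$ with $r_1+r_2=r_3+r_4$, $\gamma_1+\gamma_2=\gamma_3+\gamma_4$ and $\delta_1+\delta_2=\delta_3+\delta_4$; hence the number $t$ of type-$1$ factors, the number $n-t$ of type-$2$ factors, the total $\sum_i\gamma_i$ and the total $\sum_i\delta_i$ are all invariant. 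By the Remark preceding the statement and by suitability, $w^{\alpha_+}$ and $w^{\alpha_-}$ agree in $t$, in $n-t$ and in $\sum_i\gamma_i$. It therefore suffices to prove that every monomial of the given shape is linked by an $I_+(\eta)_n$-sequence to a distinguished monomial whose multiset of $\gamma$'s (split into type-$1$ and type-$2$ values) depends only on $t$, $n-t$ and $\sum_i\gamma_i$: the two sides then arrive at the same $\gamma$-profile, and after relabelling the factors this is exactly $\gamma_i^1=\gamma_i^2$ for all $i$.

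As canonical target I would take the unique \emph{water-filling} profile that distributes the available mass $\sum_i\gamma_i$ as evenly as possible among the $t$ type-$1$ slots (capped by $k'$) and the $n-t$ type-$2$ slots (capped by $2k'+\lfloor\frac{2\rho}{3}\rfloor$); this profile is determined by the three invariants above. The moves available to reach it are the three kinds of suitable $2$-binomials between factors of types $r_1,r_2\in\{1,2\}$ studied in Lemma \ref{Lemma:MinMaxBound}, which tells me precisely when a $\gamma$-shift $(\gamma_a,\gamma_b)\mapsto(\gamma_a+s,\gamma_b-s)$ can be realised with admissible exponents $\delta$; the mixed moves ($r_1=1$, $r_2=2$) are exactly what allows $\gamma$-mass to pass between the two types. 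I would perform the canonicalisation by a sequence of productive $s=1$ shifts, controlled by a monovariant (for instance a weighted version of $\sum_i\gamma_i^2$ together with a lexicographic tie-break recording the distance to the target), so that each productive shift strictly decreases it; since $\W$ is finite the procedure terminates at the target. The degenerate moves with $s=0$, which merely redistribute $\delta$-mass between two factors sharing the same $(r,\gamma)$ and hence the same admissible range for $\delta$, are used freely throughout.

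The main obstacle is the list of exceptions in Lemma \ref{Lemma:MinMaxBound}, namely the \emph{maximum} and \emph{minimum bound problems}, where the shift I want is blocked because the relevant $\delta$'s are pinned at the top or the bottom of their ranges. This is exactly the point at which the hypothesis $n\ge 3$ is indispensable: since the monomial has at least three factors (and, after Proposition \ref{Prop:Reductiontype12}, none of type $r=0$ or $r=3$), I can first apply an auxiliary step involving a third factor --- either an $s=0$ redistribution that lowers an offending maximal $\delta$, or a short shift that raises an offending minimal $\delta$ --- and then perform the previously blocked shift, which has become admissible. The substance of the argument is to check, running through the finitely many exceptional configurations of Lemma \ref{Lemma:MinMaxBound} and the residual shapes at the caps $\gamma=k'$ and $\gamma=2k'+\lfloor\frac{2\rho}{3}\rfloor$, that such a detour always exists for monomials with at least three factors of type $\{1,2\}$; this is routine but genuinely case-heavy, and it is the only place where real work is needed.

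Once both sides have been driven to the common target, the resulting monomials $w^{\alpha_+}_r$ and $w^{\alpha_-}_u$ carry the same $\gamma$-profile, so after matching factors with equal $\gamma$ we obtain $\gamma_i^1=\gamma_i^2$ for every $i$, which is the assertion; the two associated $I_+(\eta)_n$-sequences are the concatenations of all the elementary steps used on each side. I would finally note that feasibility is never lost: the common invariant $\sum_i\delta_i$ lies in the $\delta$-range of the target profile because it already did so for the starting monomials, so the terminal monomials $w^{\alpha_+}_r$ and $w^{\alpha_-}_u$ are genuine elements of $k[w_{(r,\gamma,\delta)}]$ of the required form.
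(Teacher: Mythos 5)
Your overall goal --- drive both monomials to a common $\gamma$-profile by suitable $2$-binomial moves inside each monomial --- is the same as the paper's, but the mechanism you propose has a genuine gap: the closing feasibility claim is false, and with it the whole canonicalisation collapses. The total $\sum_i\delta_i$ is invariant under every elementary move, and the set of values it can take on a given $\gamma$-profile is the interval
\[
\Bigl[\;\textstyle\sum_i \max\{0,(r_i-1)d-2\gamma_i\},\ \sum_i \bigl\lfloor \tfrac{r_id-3\gamma_i}{2}\bigr\rfloor\;\Bigr],
\]
whose endpoints depend on the individual $\gamma_i$'s (through parity, in the floors, and through convexity, in the maxima), not only on $\sum_i\gamma_i$. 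So ``$\sum_i\delta_i$ lies in the range of the starting profile'' does not imply it lies in the range of the water-filling profile. Concretely, take $d$ even, $d\ge 16$, $t=0$, $n=3$, and
\[
w^{\alpha}\;=\;w_{(2,0,d)}^{2}\,w_{(2,8,d-12)}\;-\;w_{(2,2,d-3)}^{2}\,w_{(2,4,d-6)} .
\]
This is a non-trivial suitable $3$-binomial (both sides have $\sum\gamma_i=8$, $\sum\delta_i=3d-12$, disjoint supports), and it is of the shape your proposition starts from (no factors of type $r=0,3$). Your target, the water-filling profile $(2,3,3)$, has maximal $\delta$-capacity $(d-3)+(d-5)+(d-5)=3d-13<3d-12$, so it is unreachable from either side. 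Moreover, every $\delta$ on both sides is already at its individual maximum, so no auxiliary $s=0$ redistribution or ``short shift'' can create room: the target is ruled out by conservation of $\sum_i\delta_i$ alone, and no detour through a third factor can repair that. This is exactly the \emph{maximum bound problem} of Lemma \ref{Lemma:MinMaxBound}: when the $\delta$-mass is extremal, parity of the $\gamma$-entries constrains which profiles are feasible at all, and profiles with odd entries may simply be excluded.

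What the paper does differently, and what your argument is missing, is that the common profile is \emph{adapted to the pair}, never fixed in advance from the numerical invariants $t$, $n-t$, $\sum\gamma_i$. The paper orders the $\gamma$'s, locates the first position where the two sides disagree, and transfers $\gamma$-mass within one monomial to force agreement there; when the maximum bound problem blocks that transfer, it modifies \emph{both} monomials (a shift by $s\pm1$ on one side combined with a shift by $1$ on the other) so that the parities realign, and only then proceeds. In the example above this adaptive matching succeeds immediately with the common profile $(2,2,4)$: replacing $w_{(2,0,d)}w_{(2,8,d-12)}$ by $w_{(2,2,d-3)}w_{(2,6,d-9)}$ and then $w_{(2,0,d)}w_{(2,6,d-9)}$ by $w_{(2,2,d-3)}w_{(2,4,d-6)}$ carries the first monomial onto the second. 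Your proof could be salvaged by letting the target profile depend on both endpoints and on the $\delta$-data --- but that is precisely the case analysis that constitutes the paper's proof, so as written the canonicalisation step and the final assertion that ``feasibility is never lost'' do not hold.
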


\begin{proof}
May we assume that $\gamma_{1} \geq \cdots \geq \gamma_{t}, \gamma_{t+1} \geq \cdots \geq \gamma_{n}$ (respectively $\gamma_{i}'$) and let $\gamma_{\ell }$ be first such that $\gamma_{j} \neq \gamma_{j}'$. May we also assume that $\gamma_{\ell } = \gamma_{\ell }'+ s$ with
$s > 0$. Hence $\sum_{j \neq \ell } \gamma_{j} + s = \sum_{j \neq \ell } \gamma_{j}'$. Let $\gamma_{i}$ be the first such that $\gamma_{i} < \gamma_{i}'$ with $i > \ell $ and let $s_{i} > 0$ be such that $\gamma_{i} + s_{i} = \gamma_{i}'$. Now we discuss two cases.

\noindent \underline{Case 1:} $s \leq s_{i}$. According to Lemma \ref{Lemma:MinMaxBound} when doing $\gamma_{\ell } - s$ and $\gamma_{i} + s$ the minimum bound problem (shortly, mbp) does not take place and the maximum bound problem (shortly, MBP) appears when $r_{\ell }d-3\gamma_{\ell }$, $r_{i}d - 3\gamma_{i}$ are even, $s$ is odd, $\delta_{\ell } = \frac{r_{\ell }d - 3\gamma_{\ell }}{2}$ and$\delta_{i} = \frac{r_{i}d-3\gamma_{i}}{2}$. If
MBP does not appear
 we define,
$$w^{a_{2}} = w_{(r_{1}, \gamma_{1},\delta_{1})}w_{(r_{2},\gamma_{2},\delta_{2})}\cdots
w_{(r_{\ell }, \gamma_{\ell } - s, \bar{\delta}_{\ell})}\cdots w_{(r_{i}, \gamma_{i} + s, \bar{\delta}_{i})} \cdots w_{(r_{n},\gamma_{n},\delta_{n})}$$
and then $\{w^{a_{+}}, w^{a_{2}}\}$ is an $I_{+}(\eta)_{n}$-sequence and  $w^{a_{2}}, w^{a_{-}} $ share the same $\gamma $ in position $\ell $.
We assume that the MBP appears and  we divide the discussion in several subcases based on  the parity of $d$.
\begin{itemize}
\item [1.1]  $\varepsilon = 0$,  $\gamma_{\ell }$ and $\gamma_{i}$  even and $s$ odd.
\item [1.1]  $\varepsilon = 1$, $r_{l} = r_{i} = 2$,  $\gamma_{\ell}$ and $\gamma_{i}$  even and $s$ odd.
\item [1.3] $\varepsilon = 1$ , $r_{l} = r_{i} = 1$,  $\gamma_{\ell }$ and $\gamma_{i}$ odd and $s$ odd.
\item [1.4] $\varepsilon = 1$, $r_{l} = 1$, $r_{i} = 2$,  $\gamma_{\ell} $  odd, $\gamma_{i}$  even and $s$ odd.
\end{itemize}

We treat 1.1, the remainder cases are similar and we leave them to the reader. We will modify both  $w^{a_{+}}$ and $w^{a_{-}}$. When doing $\gamma_{\ell } - (s+1)$ and $\gamma_{i} + (s+1)$ the MBP disappears.
Since $\gamma_{\ell}$ and $\gamma_{i}$ are even and $s$ is odd we get that  $\gamma_{\ell}'$ is odd.  If
 $\gamma_{i}' < r_{i}k' + \lfloor \frac{r_{i}\rho}{3} \rfloor$ when doing $\gamma'_{\ell} -1$ and $\gamma '_i+1$
 the mbp does not  appear  and we set
$$w^{a_{2}} = w_{(r_{1},\gamma_{1}, \delta_{1})} \cdots w_{(r_{\ell}, \gamma_{\ell}-(s+1),
\bar{\delta}_{\ell})} \cdots w_{(r_{i}, \gamma_{i} + s+1, \bar{\delta}_{i})} \cdots
w_{(r_{n},\gamma_{n},\delta_{n})}$$
$$w^{a'_{2}} = w_{(r_{1},\gamma_{1}',\delta_{1}')} \cdots w_{(r_{\ell}, \gamma_{\ell}'-1, \bar{\delta}_{\ell}')} \cdots w_{(r_{i}, \gamma_{i}'+1, \bar{\delta}_{i}')} \cdots
w_{(r_{n},\gamma_{n}',\delta_{n}')}.$$
$\{w^{\alpha_{+}},w^{a_{2}}\}$ and $\{w^{a_{2}'},w^{\alpha_{-}'}\}$ are $I_{+}(\eta)_{n}$-sequences and  $w^{a_{2}},w^{a_{2}'}$ share the same $\gamma $ in position $\ell $.

If $\gamma_{i}' = r_{i}k' + \lfloor \frac{r_{i}\rho}{3} \rfloor$, there is no problem when doing $\gamma_{\ell}'+1$, $\gamma_{i}'-1$ and set
$$w^{a_{2}} = w_{(r_{1},\gamma_{1},\delta_{1})} \cdots w_{(r_{\ell},\gamma_{l\ell}(s-1), \bar{\delta}_{\ell})}
 \cdots w_{(r_{i}, \gamma_{i} + (s-1), \bar{\delta}_{i})} \cdots w_{(r_{n}, \gamma_{n}, \delta_{n})}$$
$$w^{a_{2'}} = w_{(r_{1},\gamma_{1},\delta_{1})} \cdots w_{(r_{\ell}, \gamma_{\ell}'+1, \bar{\delta}_{\ell}')} \cdots w_{(r_{i}, \gamma_{i}'-1, \bar{\delta}_{i}')} \cdots w_{(r_{n}, \gamma_{n}', \delta_{n}')}.$$
In any case $w^{a_{2}}$ and $w^{\alpha_{-}}$ (resp. $w^{a_{2}'}$) share the same $\gamma$ in position $\ell$.

\vspace{0.3cm}
\noindent \underline{Case 2:} $s > s_{i}$. Arguing as in Case 1 we distinguish cases 1.1, 1.2, 1.3 and 1.4 and we treat the first one. Assume $\gamma_{\ell}$, $\gamma_{i}$ even, $s$ odd and $\delta_{\ell} = \frac{r_{\ell}d-3\gamma_{\ell}}{2}, \delta_{i} = \frac{r_{i}d-3\gamma_{i}}{2}$. Hence now $\gamma_{i}'$ is odd and we can argue as in Case 1 when doing $\gamma_{\ell}' + 1$ and $\gamma_{i}' - 1$. If $s > s_{i}$, then $w^{a_{2}}$ and $w^{\alpha_{-}}$ (resp. $w^{a_{2}'}$) verifies the same hypothesis that $w^{\alpha_{+}}$ and $w^{\alpha_{-}}$ but now we have $\gamma_{\ell}-s_{i}$ and $\gamma_{\ell}'$ (resp. $\gamma_{\ell}-(s_{i}-1)$ and $\gamma_{\ell}'+1$) in position $\ell$. Next we apply the same strategy to $w^{a_{2}}$ and so on until in step $t > 1$ the resulting monomial $w^{a_{t}}$ verifies Case 1.

The result follows by iterating the above argument.
\end{proof}

\begin{rem} \rm Notice that not necessarily $w^{\alpha_{+}}_{r}-w^{\alpha_{-}}_{u}$
is a non trivial suitable $n$-binomial. In which case we obtain an $I_{n}$-sequence from $w^{\alpha_{+}}$ to $w^{\alpha_{-}}$ arguing as below Proposition \ref{Prop:Reductiontype12}.
\end{rem}

\begin{ex}\rm In Example \ref{Ex:elem4} (2), we had
$w_{(0,0,0)}w_{(1,0,0)}w_{(2,0,5)}w_{(3,5,0)}- w_{(1,1,0)}^{2}w_{(2,1,3)}w_{(2,2,2)}$ and we have build the $I_{+}(\eta)_{4}$-sequence
$$\{w_{(0,0,0)}w_{(1,0,0)}w_{(2,0,5)}w_{(3,5,0)}, w_{(1,0,1)}w_{(1,0,2)}^{2}w_{(3,5,0)},
w_{(1,0,2)}^{2}w_{(2,2,1)}w_{(2,3,0)}\}.$$

Now we apply Proposition \ref{Prop:FinalReduction} to the non trivial $4$-binomial $$w_{(1,1,0)}^{2}w_{(2,1,3)}w_{(2,2,2)}- w_{(1,0,2)}^{2}w_{(2,2,1)}w_{(2,3,0)}.$$

We have $\gamma_{1} = \gamma_{2} = 0, \gamma_{3} = 2, \gamma_{4} = 3$ and
$\gamma_{1}' = \gamma_{2}' = 1, \gamma_{3} = 1, \gamma_{4} = 2$, with $\gamma_{1} = \gamma_{1}'+1$. The first $\gamma_{i} < \gamma_{i}'$ corresponds to $\gamma_{3}$ with $s_{3} = 1$. Then we choose the suitable  $2$-binomial $w_{(1,1,0)}w_{(2,1,3}) - w_{(1,0,1)}w_{(2,2,2)}$ and we define $w^{a_{2}}:= w_{(1,0,1)}w_{(1,1,0)}w_{(2,2,2)}^{2}$. Note the $\gamma$'s involved in
$w^{a_{2}}$ by $\tilde{\gamma}_{i}$, $i = 1,2,3,4$. Now $\tilde{\gamma}_{1} = \gamma_{1}',
\tilde{\gamma}_{2} = 1$, $\tilde{\gamma}_{3} = \tilde{\gamma}_{4} = 2$. The first
$\tilde{\gamma}_{i} > \gamma_{i}'$ is $\gamma_{2} = \gamma_{2}'+1$ and the first $\gamma_{j} < \gamma_{j}'$ with $j \geq 3$ is $\gamma_{4} = 2$ with $s_{4} = 1$. Then we choose the suitable  $2$-binomial $w_{(1,1,0)}w_{(2,2,2)}-w_{(1,0,2)}w_{(2,3,0)}$ and we define
$w^{a_{3}}:= w_{(1,0,1)}w_{(1,0,2)}w_{(2,2,2)}w_{(2,3,0)}$. We have obtained an $I_{+}(\eta)_{4}$-sequence from $w_{(0,0,0)}w_{(1,0,0)}w_{(2,0,5)}w_{(3,5,0)}$ to  $w_{(1,1,0)}^{2}w_{(2,1,3)}w_{(2,2,2)}$. Precisely, $\{w_{(0,0,0)}w_{(1,0,0)}w_{(2,0,5)}w_{(3,5,0)}, w_{(1,0,1)}w_{(1,0,2)}^{2}w_{(3,5,0)},
w_{(1,0,2)}^{2}w_{(2,2,1)}w_{(2,3,0)}, w_{(1,0,1)}w_{(1,0,2)}w_{(2,2,2)}w_{(2,3,0)},$ $ w_{(1,0,1)}w_{(1,1,0)}w_{(2,2,2)}^{2}, w_{(1,1,0)}^{2}w_{(2,1,3)}w_{(2,2,2)}\}.$
\end{ex}

\vspace{0.3cm}
Finally we consider $w^{\alpha} = w^{\alpha_{+}}-w^{\alpha_{-}}$ a non trivial suitable $n$-binomial as in Proposition \ref{Prop:FinalReduction}. Assume that the resulting suitable $n$-binomial $w^{\alpha_{+}}_{r}-w^{\alpha_{-}}_{u} = \prod_{i=1}^{t} w_{(1,\gamma_{i}^{1}, \delta_{i}^{1})}\prod_{i=t+1}^{n} w_{(2,\gamma_{i}^{1}, \delta_{i}^{1})}
- \prod_{i=1}^{t} w_{(1,\gamma_{i}^{2}, \delta_{i}^{2})}\prod_{i=t+1}^{n} w_{(2,\gamma_{i}^{2}, \delta_{i}^{2})}$ is non trivial and non zero. To prove Theorem \ref{Teorema:BigTheorem} it is enough now to show that $w^{\alpha_{+}}_{r}-w^{\alpha_{-}}_{u}$ admits an
$I_{+}(\eta)_{n}$-sequence.
$w^{\alpha_{+}}_{r}-w^{\alpha_{-}}_{u} $ verifies $\gamma_{i}^{1} = \gamma_{i}^{2}$ for all $1 \leq i \leq n$.
For each $\delta_{i}^{1} < \delta_{i}^{2}$, $1 \leq i \leq n$, set $a_{i} = \delta_{i}^{2}- \delta_{i}^{1}$ and $b_{i} = 0$, otherwise set $a_{i} = 0$ and $b_{i} =\delta_{i}^{1} -  \delta_{i}^{2}$. Therefore
$\delta_{1}^{1} + a_{1} - b_{1} + \cdots + \delta_{n}^{1} + a_{n} - b_{n} = \delta_{1}^{2} + \cdots + \delta_{n}^{2}$, which implies that
$a_{1} + \cdots + a_{n} = b_{1} + \cdots + b_{n}$. May we assume that
$a_{1} > 0$. Hence $\delta_{2}^{1} + \cdots + \delta_{n}^{1} > \delta_{2}^{2} + \cdots
+ \delta_{n}^{2}$. Without lost of generality we can assume that $\delta_{i}^{1} > \delta_{i}^{2}$, $i = 2,\hdots, n$. Then $b_{i} > 0$ and $\delta_{i}^{1} + b_{i} = \delta_{i}^{2}$,
$i = 2,\hdots, n$. So $a_{1} \leq b_{2} + \cdots + b_{n}$ and we can consider  $c_{i} \leq b_{i}$ such that $a_{1} = c_{2} + \cdots + c_{n}$. Set
$$w^{a_{2}} = w_{(r_{1},\gamma_{1}^{1}, \delta_{1}^{1} + c_{2})}w_{(r_{2}, \gamma_{2}^{1},
\delta_{2}^{1} - c_{2})} w_{(r_{3}, \gamma_{3}^{1},\delta_{3}^{1})} \cdots
w_{(r_{n}, \gamma_{n}^{1}, \delta_{n}^{1})}.$$
$\{w^{\alpha_{+}}_{r}, w^{a_{2}}\}$ is an $I_{+}(\eta)_{n}$-sequence. If $\delta_{2}^{1}-c_{2} = \delta_{2}^{2}$, then $\{w^{\alpha_{+}}_{r}, w^{a_{2}}, w^{\alpha_{-}}_{u}\}$ is an $I_{+}(\eta)_{n}$-sequence and we finish. Else inductively set
$$2 < i \leq n, \; w^{a_{i}} = w_{(r_{1},\gamma_{1}^{1}, \delta_{1}^{1} + c_{2} + \cdots + c_{i})}w_{(r_{2}, \gamma_{2}^{1}, \delta_{2}^{1} - c_{2})} \cdots w_{(r_{i}, \gamma_{i}^{1},\delta_{i}^{1}-c_{i})} \cdots w_{(r_{i+1}, \gamma_{i+1}^{1}, \delta_{i+1}^{1})} \cdots w_{(r_{n}, \gamma_{n}^{1}, \delta_{n}^{1})}.$$
Since at some point $2 \leq t \leq n$ we achieve $w^{a_{t}}-w^{\alpha_{-}}_{u}$ trivial, we construct an $I_{+}(\eta)_{n}$-sequence $\{w^{\alpha_{+}}_{r}, w^{a_{2}}, \hdots, w^{a_{t}}, w^{\alpha_{-}}_{u}\}$ and the proof of Theorem \ref{Teorema:BigTheorem} is completed.
\hfill $\square$

%%%%%%%%%%%%%%%%%%%%%%%%%%%%%%%%%%%%%%%%%%%%%%
\section{Final remarks and open problems}

In the previous sections we have explicitly described $I(X_d)$. Next goal will be to compute a minimal free resolution of $I(X_d)$ or at least its graded Betti numbers.
Using the program Macaulay2 we have computed a minimal free $R$-resolution of the ideal of the  $GT$-threefold $X_{d}$ for $d = 4,5,6$ and we have got:

\noindent \underline{$d = 4:$}
$$0 \to R(-8)^{9} \to R(-7)^{48} \to R(-6)^{100} \to R(-4)^{6} \oplus
R(-5)^{96} \to $$
$$ R(-3)^{16}\oplus R(-4)^{36} \to R(-2)^{12} \to R \to
R/I_{X_4} \to 0$$

\noindent \underline{$d = 5:$}
$$0 \to R(-10)^{16} \to R(-9)^{120} \to R(-8)^{385} \to R(-7)^{680}
\to R(-6)^{700} \to R(-4)^{15}\oplus R(-5)^{392}  \to $$
$$R(-3)^{48}\oplus R(-4)^{85} \to R(-2)^{20}\oplus R(-3)^{8} \to R \to R/I_{X_5} \to 0$$

\noindent \underline{$d = 6:$}
$$0 \to R(-14)^{4} \oplus R(-15)^{2} \to R(-13)^{108} \oplus R(-14)^{7} \to R(-12)^{803} \to R(-11)^{2850} \to
R(-10)^{6237} \to $$
$$R(-9)^{9064} \to R(-7)^{6} \oplus R(-8)^{8811} \to
R(-6)^{258} \oplus R(-7)^{5352} \to R(-5)^{844} \oplus R(-6)^{1638} \to$$
$$ R(-4)^{796} \oplus R(-5)^{184}\to R(-3)^{322}\oplus R(-4)^{13} \to R(-2)^{57} \to R \to R/I_{X_6} \to 0.$$

\vspace{0.3cm}
It follows from \cite[Proposition 13]{HE} that $X_{d}$ is arithmetically Cohen-Macaulay (see also \cite{BH} and \cite{H}). We would like to address the following problem.

\begin{prob} Find explicitly a minimal free $R$-resolution of $I(X_{d})$ for all $d \geq 4$. 
\end{prob}
%%%%%%%%%%%%%%%%%%%%%%%%%%%%%%%%%%%%%%
%%%%%%%%%%%%%%%%%%%%%%%%%%%%%%%%%%%%%%%%%%%%%%%%%%


\begin{thebibliography}{ll}

\bibitem{AMRV} C. Almeida, Aline V. Andrade and R.M. \ Mir\'o-Roig, {\em Gaps in the number of generators of monomial Togliatti systems}. Journal of Pure and Applied Algebra
{\bf 223} (2019), 1817--1831.

\bibitem{BST} D. Bayer and B. Sturmfels, {\em Cellular resolutions of monomial modules.} J. reine angew. Math. {\bf 502} (1998), 123--140.

\bibitem{BK} H.\ Brenner and A.\ Kaid, {\em Syzygy bundles on $\mathbb P^2$
and the Weak Lefschetz Property},  Illinois J.\ Math.\ {\bf 51}
(2007),  1299--1308.

\bibitem{BH} W. Bruns and J. Herzog, {\em Cohen-Macaulay rings.}
Cambridge University Press, 1993.  

\bibitem{ChaKT} H. Charalambous, A. Katsabekis and A. Thoma, {\em Minimal systems of binomial generators and the indispensable complex of a toric ideal.} Proc. Amer. Math. Soc. {\bf 135}
(2007), 3443--3451.

\bibitem{ChaTV} H. Charalambous, A. Thoma, M. Vladoiu, {\em Binomial fibers and indispensable binomials.} J. Symbolic Computation {\bf 74} (2016), 578--591.

\bibitem{ChaTV1} H. Charalambous, A. Thoma and M. Vladoiu, {\em Minimal generating set of lattice ideals.} M. Collect. Math. {\bf 68} (2017), 377--400.


\bibitem{CMMRS} L. Colarte, E. Mezzetti, R. M. Mir\'{o}-Roig and M. Salat,
\emph{On the coefficients of the permanent and the determinant of a circulant matrix. Applications.} Proc. AMS. {\bf 147} (2019), 547--558.

\bibitem{DST} P. Diaconis and B. Sturmfels, {\em Algebraic Algorithms for sampling from conditional distributions.} The Annals of Statistics {\bf 26} (1998), 363--397.

\bibitem{Grob} W. Gr\"{o}bner, {\em \"{U}ber Veroneseche Variet\"{a}ten und deren Projektionen}, Arch. Math. {\bf 16} (1965), 257 --264.


\bibitem{ES} D. Eisenbud and B. Sturmfels, {\em Binomial ideals}. Duke Math. J. {\bf 84} No. 1 (1996), 1--45.

\bibitem{M} D.R.~Grayson and M.E.~ Stillman,{\em Macaulay2, a software system for research
in algebraic geometry}, {Available at http://www.math.uiuc.edu/Macaulay2/}

\bibitem{HMNW} T.\ Harima, J. Migliore, U. Nagel and J. Watanabe, {\em
The Weak and Strong     Lefschetz Properties for Artinian $K$-Algebras}, J.\
Algebra {\bf 262} (2003), 99-126.

\bibitem{HM} R. Hemmecke and P. Malkin, {\em Computing generating sets of lattice ideals and Markov basis of lattices.} J. Symbolic Computation {\bf 44} (2009), 1463-1476.

\bibitem{H} M. Hochster, {\em Rings of Invariants of Tori, Cohen-Macaulay Rings Generated by Monomials, and Polytopes.} Annals of Mathematics {\bf 96} (1972), 318-337. 

\bibitem{HE} M. Hochster and J. A. Eagon, {\em Cohen-Macaulay Rings, Invariant Theory, and the Generic Perfection of Determinantal Loci}, American Journal of Mathematics {\bf 93} (1971), 1020-1058. 


\bibitem{MeMR} E.\ Mezzetti and R.M. \ Mir\'o-Roig, \emph{The minimal
number of generators of a  Togliatti system}, Annali di Matematica Pura ed Applicata {\bf 195} (2016), 2077-2098. DOI .10.1007/s10231-016-0554-y.

\bibitem{MeMR2} E.\ Mezzetti and R.M. \ Mir\'o-Roig. \emph{Togliatti systems and Galois coverings.} Journal of Algebra {\bf 509} (2018), 263--291.

\bibitem{MMO} E.\ Mezzetti, R.M. \ Mir\'o-Roig and G.\ Ottaviani,
  \emph{Laplace Equations and the Weak Lefschetz Property}, Canad.
J. Math. {\bf 65} (2013), 634--654.


\bibitem{MMR} M.  Micha{\l}ek and R.M.\ Mir\'o-Roig, {\em Smooth
monomial Togliatti systems of cubics}, Journal of
Combinatorial Theory, Ser. A. {\bf 143} (2016), 66--87. http://dx.doi.org
/10.1016/j.jcta.2016.05.004.

\bibitem{MRS}  R.M. \ Mir\'o-Roig and M. Salat, \emph{On the classification of Togliatti systems}, Comm. in Alg. {\bf 46} (2018), 2459--2475.

     \bibitem{T1} E.\ Togliatti, {\em Alcuni esempi di superfici algebriche
     degli iperspazi che rappresentano un'equazione di Laplace},
     Comm. Math. Helvetici {\bf 1} (1929), 255--272.

     \bibitem{T2} E.\ Togliatti, {\em Alcune osservazioni sulle
     superfici razionali che rappresentano equazioni di Laplace},
Ann. Mat. Pura Appl. (4)
      {\bf  25} (1946) 325--339.

\bibitem{ST} B. Sturmfels, {\em Groebner bases of toric varieties.} T\^ohoku Math J.
{\bf 43} (1991), 249-261.


\end{thebibliography}
\end{document}